\newtheorem{theorem}{Theorem}[section]
\newtheorem{lemma}[theorem]{Lemma}
 \newtheorem{proposition}[theorem]{Proposition}
\newtheorem{corollary}[theorem]{Corollary}
\theoremstyle{definition}
\newtheorem{example}[theorem]{Example}
\newtheorem{remark}[theorem]{Remark}
\newtheorem{ltheorem}{Theorem}
\def\real{\mathbb{R}}
\def\field{\mathbb{K}}
\def\complex{\mathbb{C}}
\def\integer{\mathbb{Z}}
\def\natural{\mathbb{N}}
\def\projective{\mathbb{P}}
\def\Hyp{\mathfrak{H}}
\def\cB{\mathcal{B}}
\def\cF{\mathcal{F}}
\def\cW{\mathcal{W}}
\def\cH{\mathcal{H}}
\def\cS{\mathcal{S}}
\def\cP{\mathcal{P}}
\def\cM{\mathcal{M}}
\def\cV{\mathcal{V}}
\def\fB{\mathfrak B}
\def\dist{\operatorname{dist}}
\def\dim{\operatorname{dim}}
\def\id{\operatorname{id}}
\def\quand{\quad\text{and}\quad}
\def\hf{\hat{f}}
\def\hF{\hat{F}}
\def\hA{\hat{A}}
\def\hsigma{\hat{\sigma}}
\def\hSigma{\hat{\Sigma}}
\def\hM{\hat{M}}
\def\hx{\hat{x}}
\def\hz{\hat{z}}
\def\hq{\hat{q}}
\def\hp{\hat{p}}
\def\hy{\hat{y}}
\def\hw{\hat{w}}
\def\hm{\hat{m}}
\def\hnu{\hat{\nu}}
\def\hmu{\hat{\mu}}
\def\tmu{\tilde{\mu}}
\def\hmuSigma{\hnu}
\def\muSigma{\nu}
\def\hphi{\hat{\phi}}
\def\hrho{\hat{\rho}}
\def\hvro{\hat{\varrho}}
\def\hxt{(\hx,t)}
\def\hys{(\hy,s)}
\def\hzr{(\hz,r)}
\def\grass{\operatorname{Grass}}
\def\grassl{\grass(l,d)}
\def\grassd{\grass(d-l,d)}
\def\sectl{\operatorname{sec}(K,\grassl)}
\def\sectd{\operatorname{sec}(K,\grassd)}
\def\projfield{\projective\mathbb{K}^{d}}
\def\graf{\operatorname{graph}\Hyp}
\def\exteriorl{\Lambda^{l}(\field^d)}
\def\exteriord{\Lambda^{(d-l)}(\field^d)}
\def\L1{L^1_{\mu}(M,N)}
\def\0{\hat{0}}
\def\GL{\operatorname{GL}(d,\field)}
\def\SLthreereal{\operatorname{SL}(3,\real)}
\def\limlog{\lim_{n\to \infty}\frac{1}{n}\log}
\def\adj{\hat{A}_{*}}
\def\loc{{\operatorname{loc}}}
\def\spann{{\operatorname{span}}}
\newcommand{\ip}[1]{{\left\langle #1 \right\rangle}}
\newcommand{\norm}[1]{{\left\lVert #1 \right\rVert}}
\title[Simple Lyapunov Spectrum]
{Simple Lyapunov spectrum for certain linear cocycles over partially hyperbolic maps}
\author{Mauricio Poletti and Marcelo Viana}
\date{\today}
\subjclass[2010]{37H15, 37D25, 37D30}
\keywords{Lyapunov exponent, linear cocycle, partial hyperbolicity}
\thanks{Work partially supported by Fondation Louis D. -- Institut de France (project coordinated by M. Viana).
The authors were also supported by CNPq, FAPERJ and CAPES}
\address{IMPA -- Estrada D. Castorina 110, Jardim Bot\^anico, 22460-320 Rio de Janeiro, Brazil.}
\email{mpoletti@impa.br, viana@impa.br}
\begin{document}

\begin{abstract}
Criteria for the simplicity of the Lyapunov spectra of linear cocycles have been found by
Furstenberg, Guivarc'h-Raugi, Gol'dsheid-Margulis and, more recently, Bonatti-Viana and Avila-Viana.
In all the cases, the authors consider cocycles over hyperbolic systems, such as shift maps
or Axiom A diffeomorphisms.

In this paper we propose to extend such criteria to situations where the base map is just
partially hyperbolic. This raises several new issues concerning, among others, the recurrence
of the holonomy maps and the (lack of) continuity of the Rokhlin disintegrations of $u$-states.

Our main results are stated for certain partially hyperbolic skew-products whose iterates
have bounded derivatives along center leaves. They allow us, in particular, to exhibit non-trivial examples
of stable simplicity in the partially hyperbolic setting.
\end{abstract}

\maketitle

\section{Introduction}
The theory of linear cocycles is now a classical field of dynamical systems and ergodic
theory, grounded on the pioneer works of Furstenberg, Kesten~\cite{FK60,Fur63} and
Oseledets~\cite{Ose68}.
The derivatives of smooth dynamical systems are the first examples that come to mind,
but the notion of linear cocycle is a lot more broad, and arises naturally in many other
situations, e.g., in the spectral theory of Schr\"odinger operators.

Among the outstanding issues is the problem of simplicity: \emph{when is it the case that the
dimension of all Oseledets subspaces is equal to 1?}
This was first studied by Furstenberg~\cite{Fur63}, Guivarc'h-Raugi~\cite{GR86} and
Gol'dsheid-Margulis~\cite{GM89}, who obtained explicit simplicity criteria for random i.i.d.
products of matrices.
Recently, Bonatti-Viana~\cite{BoV04} and Avila-Viana~\cite{AvV1} extended the theory to include
a much broader class of (H\"older continuous) cocycles over hyperbolic maps.
There is also much progress in the quasi-periodic case, that is, for linear cocycles over rotations:
see Duarte-Klein \cite{DuK} and references therein.

Our purpose in this paper is to initiate the study of the simplicity problem in the context of linear
cocycles over partially hyperbolic maps, that combine features from both the hyperbolic and the
quasi-periodic cases.

The theory of partially hyperbolic diffeomorphisms and flows was initiated by Brin-Pesin~\cite{BP74}
and Hirsch-Pugh-Shub~\cite{HPS77} and has been at the heart of much recent progress in dynamical
systems. While boasting many of the important properties of uniformly hyperbolic (Axiom A) systems,
partially hyperbolic maps are a lot more flexible and encompass several interesting new phenomena.

Linear cocycles over volume-preserving partially hyperbolic maps were studied previously by
Avila-Santamaria-Viana~\cite{ASV13}. The issue of simplicity is much better understood when the
base map is non-uniformly hyperbolic, meaning that all the center Lyapunov exponents are non-zero.
Indeed, Viana~\cite{Almost} proved that simplicity is generic, in a very strong sense, among
$2$-dimensional cocycles. Backes-Poletti-Varandas~\cite{BPV} extended that conclusion to any
dimension $d\ge 2$, under additional assumptions such as fiber-bunching.

For this reason, here we focus on the opposite case, namely, we take the partially hyperbolic map
to be \emph{mostly neutral along the center direction}, meaning that its iterates have bounded
derivatives along the leaves of the center foliation. The following simple example illustrates
some of the systems we have in mind.

Let $\omega_0, \omega_1$ be real numbers and $f_0, f_1:S^1\to S^1$ be the corresponding rotations,
that is, $f_i(t)=t+\omega_i \mod \integer$ for every $t\in S^1$. Take $\omega_0$ to be irrational.
Let $A_0:S^1\to \SLthreereal$ and $A_1:S^1\to\SLthreereal$ be given by
\begin{equation*}
\begin{aligned}
A_0(t) = \left(\begin{array}{ccc} 2 & 0 & 0 \\ 0 & 1 & 0 \\ 0 & 0 & 2^{-1} \end{array}\right)
\text{ and }
A_1(t) = R_1(t) R_2(t) R_3(t)
\end{aligned}
\end{equation*}
where $R_i(t)$ denotes the rotation of angle $2 \pi t$ around the $i$-th axis.
Each $A_i$ defines a linear cocycle $F_i$ over the transformation $f_i$. We want to consider the random
combination $\hF$ of these two cocycles: at each step one applies either $F_1$ or $F_2$, at random.

The results in this paper (see Theorem~\ref{teo.B} and Example~\ref{ex.main}) ensure that the Lyapunov
spectrum of $\hF$ is simple, and the same is true for any small perturbation in the uniform topology.
Concerning this last point, it should be noted that simplicity of the Lyapunov spectrum is usually not
an open property, cf. Wang, You~\cite{WaY}.

\subsubsection*{Acknowledgements.} We are grateful to Lucas Backes, Fernando Lenarduzzi, Enrique Pujals
and Jiagang Yang for numerous discussions, and to the anonymous referee for a thoughtful review of the
manuscript and many suggestions that helped improve the text.

\section{Definitions and statements}\label{s.2}

Here we state our main result. Beforehand, we must give the precise definitions of the notions involved
in the statement. In what follows, $\field$ denotes either the real field $\real$ or the complex field
$\complex$, indifferently.

\subsection{Linear cocycles and Lyapunov exponents}

The \emph{linear cocycle} defined by a measurable matrix-valued function $\hA:\hM\to \GL$
over an invertible measurable map $\hf:\hM\to \hM$ is the (invertible) map
$\hF_A:\hM\times \field^d\to \hM\times \field^d$ given by
\begin{equation*}
\hF_A\big(\hp,v\big)=\big(\hf(\hp),\hA(\hp)v\big).
\end{equation*}
Its iterates are given by $\hF^n_A\big(\hp,v\big)=\big(\hf^n(\hp),\hA^n(\hp)v\big)$ where
\begin{equation*}
\hA^n(\hp)=
\left\{
	\begin{array}{ll}
		\hA(\hf^{n-1}(\hp))\ldots \hA(\hf(\hp))\hA(\hp)  & \text{if } n>0 \\
		\id & \text{if } n=0 \\
		\hA(\hf^{n}(\hp))^{-1}\ldots \hA(\hf^{-1}(\hp))^{-1}& \text{if } n<0. \\
	\end{array}
\right.
\end{equation*}

Let $\hmu$ be an $\hf$-invariant probability measure on $\hM$ such that $\log\|\hA^{\pm 1}\|$
are integrable. By Oseledets~\cite{Ose68}, at $\hmu$-almost every point
$\hp\in \hM$ there exist real numbers $\lambda_1 \left(\hp\right)>\cdots >\lambda_k\left(\hp\right)$
and a decomposition $\field^d=E^1_{\hp} \oplus \cdots \oplus E^k_{\hp}$ into vector subspaces such that
\begin{equation*}
\hA(\hp)E^i_{\hp}=E^i_{\hf\left(\hp\right)}
\text{ and } \lambda_i(\hp)=\lim_{\mid n\mid \to \infty}\frac{1}{n}\log\|\hA^n(\hp)v\|
\end{equation*}
for every non-zero $v\in E^i_{\hp}$ and $1\leq i \leq k$. The dimension of $E^i_{\hp}$ is called the
\emph{multiplicity} of $\lambda_i(\hp)$.

In this work we assume that the invariant measure $\hmu$ is ergodic.  Then the Lyapunov exponents and the
dimensions of the subspaces $E^i_{\hp}$ are constant almost everywhere.
The \emph{Lyapunov spectrum} of the cocycle is the set of all Lyapunov exponents. The following notion is
central to the whole paper: the Lyapunov spectrum is \emph{simple} if it contains exactly $d$ distinct Lyapunov
exponents or, equivalently, if every Lyapunov exponent has multiplicity equal to $1$.

\subsection{Partially hyperbolic skew-products}

Let $\hsigma:\hSigma\to \hSigma$ be any two-sided finite or countable shift.
By this we mean that $\hSigma$ is the set of two-sided sequences $(x_n)_{n\in\integer}$
in some set $X\subset\natural$ with $\# X>1$, and the map $\hsigma$ is given by
\begin{equation*}
\hsigma\left(\left(x_n\right)_{n\in \integer}\right)=\left(x_{n+1}\right)_{n\in \integer}.
\end{equation*}
Let $\dist_{\hSigma}:\hSigma\times \hSigma\to \real$ be the distance defined by
\begin{equation}\label{eq.distanceSigma}
\dist_{\hSigma}(\hx,\hy)=\sum_{k=-\infty}^{\infty} 2^{-\lvert k\rvert} \delta(x_k,y_k),
\quad\text{with $\hx=\left(x_k\right)_{k\in \integer}$ and $\hy=\left(y_k\right)_{k\in \integer}$,}
\end{equation}
where $\delta(x,y)=1$ if $x=y$ and $\delta(x,y)=0$ otherwise.
Then $\hsigma$ is a \emph{hyperbolic homeomorphism} (in the sense of \cite{Almost}),
as we are going to explain.

Given any $\hx\in \hSigma$, we define the
\emph{local stable} and \emph{unstable sets} of $\hx$ with respect to $\hsigma$ by
\begin{align*}
W^{s}_{\loc}\left(\hy\right) & =\lbrace \hx: x_k=y_k\text{ for every }k\geq 0\rbrace \text{ and }\\
W^{u}_{\loc}\left(\hy\right) & =\lbrace \hx: x_k=y_k\text{ for every }k\leq 0\rbrace.
\end{align*}
From now on we fix $\lambda=1/2$ and $\tau = 1/2$. Then,
\begin{itemize}
\item[(i)] $\dist_{\hSigma}(\hsigma^n(\hy_1),\hsigma^n(\hy_2)) \leq \lambda^n \dist_{\hSigma}(\hy_1,\hy_2)$ for any
      $\hy \in \hSigma$, $\hy_1, \hy_2 \in W^s_{\loc} (\hy)$ and $n \geq 0$;
\item[(ii)] $\dist_{\hSigma}(\hsigma^{-n}(\hy_1), \hat{\hsigma}^{-n}(\hy_2)) \leq \lambda^n \dist_{\hSigma}(\hy_1,\hy_2)$ for any
      $\hy \in \hSigma$, $\hy_1, \hy_2 \in W^u_{\loc} (\hy)$ and $n \geq 0$;
\item[(iii)] if $\dist_{\hSigma}(\hx, \hy)\leq\tau$, then $W^s_{\loc}(\hx)$ and $W^u_{\loc}(\hy)$ intersect in a unique point, which is denoted by $[x,y]$ and depends continuously on $\hx$ and $\hy$.
\end{itemize}

By a \emph{partially hyperbolic skew-product} over the shift map $\hsigma$ we mean a homeo\-morphism
$\hf:\hSigma\times K\to \hSigma \times K$ of the form
\begin{equation*}
\hf(\hx,t)=\big(\hsigma(\hx),\hf_{\hx}(t)\big)
\end{equation*}
where $K$ is a compact Riemannian manifold and the maps $\hf_{\hx}:K\to K$ are diffeomorphisms satisfying
\begin{equation}\label{eq.domination}
\lambda \|D\hf_{\hx}(t)\| < 1 \text{ and } \lambda \|D\hf_{\hx}^{- 1} (t)\| < 1 \text{ for every } (\hx,t)\in\hSigma\times K,
\end{equation}
where $\lambda$ is a constant as in (i) - (ii). We also assume the following H\"older condition:
there exist $C>0$ and $\alpha>0$ such that the $C^ 1$-distance between $\hf_{\hx}$ and $\hf_{\hy}$
is bounded by $C\dist_{\hSigma}(\hx,\hy)^\alpha$ for every $\hx, \hy \in \hSigma$.

We say that $\hf$ has \emph{mostly neutral center direction} if the maps $\hf^n_{\hx}:K\to K$
defined for $n \in \integer$ and $\hx\in \hSigma$ by
\begin{equation*}
\hf_{\hx}^n =
\left\{
	\begin{array}{ll}
		\hf_{\hsigma^{n-1}(\hx)} \circ \cdots \circ \hf_{\hx}  & \text{if } n>0 \\
		\id & \text{if } n=0 \\
		\hf_{\hsigma^n(x)}^{-1} \circ \cdots \circ \hf_{\hsigma^{-1}(\hx)}^{-1} & \text{if } n<0. \\
	\end{array}
\right.
\end{equation*}
have bounded derivatives, that is, if there exists $C>0$ such that
$$
\|D\hf_{\hx}^n\| \leq C \text{ for every $\hx\in\hSigma$ and  $n\in\integer$.}
$$

\begin{remark}\label{r.equicontinuity}
Clearly, this implies that the $\{\hf^{n}_{\hx}: j \in \integer \text{ and } \hx \in \hSigma\}$ is equicontinuous.
When the maps $\hf_{\hy}$ are $C^{1+\epsilon}$, equicontinuity alone suffices for all our purposes
(see Remark~\ref{equicontinuitysuffices}).
\end{remark}

In the definition of partially hyperbolic skew-product, one may replace the shift
$\hsigma:\hSigma\to\hSigma$ with a sub-shift $\hsigma_T:\hSigma_T\to\hSigma_T$ associated to
a transition matrix $T=(T_{i,j})_{i,j\in X}$. By this we mean that $T_{i,j}\in\{0,1\}$ for every $i, j \in X$
and $\hsigma_T$ is the restriction of the shift map $\hsigma$ to the subset $\hSigma_T$ of
sequences $(x_n)_{n\in\integer}$ such that $T_{x_n,x_{n+1}}=1$ for every $n\in\integer$.

One way to reduce the sub-shift case to the full shift case is through inducing.
Namely, fix any $1$-cylinder $[0;i]=\{(x_n)_{n\in\integer}\in\hSigma_T: x_0 = i\}$ with positive measure
and consider the first return map $g:[i] \to [i]$ of $\hsigma_T$ to $[i]$.
This is conjugate to a full countable shift (with the return times as symbols) and it preserves the
normalized restriction to the cylinder of the $\hsigma_T$-invariant measure. All the conditions that
follow are not affected by this procedure. Moreover, every linear cocycle $F$ over $\hsigma_T$
gives rise, also through inducing, to a linear cocycle over $g$ whose Lyapunov spectrum is just
a rescaling of the Lyapunov spectrum of $F$.
In particular, simplicity may also be read out from the induced cocycle.


\subsection{Stable and unstable linear holonomies}

Property \eqref{eq.domination} is a condition of \emph{domination} (or \emph{normal hyperbolicity},
in the spirit of~\cite{HPS77}): it means that any expansion and contraction of
$\hf_{\hx}$ along the fibers $\{\hx\}\times K$ are dominated by the hyperbolicity of the base map
$\hsigma$. For our purposes, its main relevance is that it ensures the existence of strong-stable
and strong-unstable ``foliations'' for $\hf$, as we explain next.

Let the product $\hM=\hSigma\times K$ be endowed with the distance defined by
\begin{equation*}
\dist_{\hM}((\hx_1,t_1),(\hx_2,t_2)) = \dist_{\hSigma}(\hx_1,\hx_2)+\dist_{K}(t_1,t_2),
\end{equation*}
where $\dist_{\hSigma}$ denotes the distance \eqref{eq.distanceSigma} on $\hSigma$ and
$\dist_K$ is the distance induced by the Riemannian metric on $K$.

We consider the \emph{stable holonomies}
\begin{equation*}
h^{s}_{\hx,\hy}:K\to K, \quad
h^{s}_{\hx,\hy}=\lim_{n\to \infty}\big(\hf^{n}_{\hy}\big)^{-1} \circ \hf^{n}_{\hx},
\end{equation*}
defined for every $\hx$ and $\hy$ with $\hx\in W^{s}_{\loc}(\hy)$,
and \emph{unstable holonomies}
\begin{equation*}
h^{u}_{\hx,\hy}:K\to K, \quad
h^{u}_{\hx,\hy}=\lim_{n\to\infty}\big(\hf^{-n}_{\hy}\big)^{-1} \circ\hf^{-n}_{\hx}
\end{equation*}
defined for every $\hx$ and $\hy$ with $\hx\in W^{u}_{\loc}(\hy)$. That these families of maps
exist follows from the assumption \eqref{eq.domination}, using arguments from \cite{BGV03}.
See for instance \cite{BaK16}, which deals with a similar setting.

We define the \emph{local strong-stable set} and the \emph{local strong-unstable set} of each
$(\hx,t)\in \hM$ to be
\begin{align*}
W^{ss}_{\loc}\left(\hx,t\right)&=\lbrace \left(\hy,s\right)\in \hM: \hy\in W^s_{\loc}(\hx)
\text{ and }s=h^{s}_{\hx,\hy}(t)\rbrace \text{ and}\\
W^{uu}_{\loc}\left(\hx,t\right)&=\lbrace \left(\hy,s\right)\in \hM: \hy\in W^u_{\loc}(\hx)
\text{ and }s=h^{u}_{\hx,\hy}(t)\rbrace,
\end{align*}
respectively. It is easy to check that
\begin{equation*}
(\hy,s)\in W^{ss}_{\loc}(\hx,t) \quad\Rightarrow\quad
\lim_{n\to +\infty}\dist_{\hM}(\hf^n(\hy,s),\hf^n(\hx,t))=0
\end{equation*}
and analogously on strong-unstable sets for time $n\to -\infty$.

\subsection{Measures with partial product structure}\label{ss.partialproduct}

Recall that $\hM=\hSigma\times K$.
Throughout, we take $\hmu$ to be an $\hf$-invariant measure with \emph{partial product structure}, that is,
a probability measure of the form $\hmu=\hrho \, \mu^{s} \times \mu^{u} \times \mu^{c}$ where:
\begin{itemize}
\item $\hrho:\hM\to (0,+\infty)$ is a continuous function  bounded away from zero and infinity;
\item $\mu^{s}$ is a probability measure supported on $\Sigma^{-}=X^{\integer_{<0}}$;
\item $\mu^{u}$ is a probability measure supported on $\Sigma^{+}=X^{\integer_{\ge 0}}$;
\item $\mu^{c}$ is a probability measure on the manifold $K$.
\end{itemize}

For notational convenience, we formulate the boundedness condition as follows: there exists $ \kappa>0$ such that
\begin{equation}\label{eq.boundedness}
 \frac{1}{\kappa}\leq \frac{\tilde{\rho}(x^s,x^u)}{\tilde{\rho}(x^s,z^u)}\leq \kappa \quand  \frac{1}{\kappa}\leq \frac{\tilde{\rho}(x^s,x^u)}{\tilde{\rho}(z^s,x^u)}\leq \kappa
\end{equation}
for every $x^s$, $z^s\in \Sigma^{-}$ and  $x^u$, $z^u\in \Sigma^{-}$, where
$\tilde{\rho}:\hSigma\to \real$ is defined by
\begin{equation}\label{eq.rhotil}
\tilde\rho(\hx)=\int \hrho(\hx,t)d\mu^c(t).
\end{equation}
Observe that when $\hSigma$ is a finite shift space this is an immediate consequence of
compactness and the continuity of $\hrho$.

Now define, for $\hx\in\hSigma$,
\begin{equation}\label{eq.hatvarrho}
\hvro(\hx,\cdotp)=\frac{\hrho(\hx,\cdotp)}{\tilde\rho(\hx)}
\quad\text{and}\quad
\hmu^c_{\hx}=\hvro(\hx,\cdot) \, \mu^{c}.
\end{equation}
In other words, $\hmu_{\hx}^c$ is the normalization of $\hrho(\hx,\cdot)\mu^c$.
Note that $\{\hmu^{c}_{\hx}: \hx\in\hSigma\}$ is a (continuous) disintegration of
$\hmu$ along vertical fibers, that is, with respect to the partition
$\hat{\cP}=\lbrace \lbrace \hx\rbrace \times K: \hx\in \hSigma \rbrace$.

The assumption that $\hmu$ is invariant under $\hf$, together with the fact that
$\hmu^c_{\hx}$ depends continuously on $\hx$, implies that
\begin{equation}\label{eq.disintegration}
(\hf_{\hx})_*\hmu^c_{\hx} = \hmu^c_{\hsigma(\hx)}
\quad\text{for every $\hx\in\hSigma$.}
\end{equation}
We will also see in Section~\ref{s.ustates} that this disintegration is \emph{holonomy invariant}:
\begin{equation}\label{eq.abs_cont}
\begin{aligned}
(h^s_{\hx,\hy})_*\hmu^c_{\hx} & = \hmu^c_{\hy} \text{ whenever } \hy \in W^s(\hx)\quand\\
(h^u_{\hx,\hy})_*\hmu^c_{\hx} & = \hmu^c_{\hy} \text{ whenever } \hy \in W^u(\hx).
\end{aligned}
\end{equation}

\begin{remark}\label{r.homoclinic}
In particular, if $\hx$ is a fixed point of the shift map then $\hmu^c_{\hx}$ is invariant under
$\hf_{\hx}$. Clearly, it is equivalent to $\mu^c$. Moreover, if $\hy$ is a homoclinic point of $\hx$,
that is, a point in $W^s(\hx) \cap W^u(\hx)$, then
$(h_{\hy,\hx}^s \circ h_{\hx,\hy}^u)_*\hmu^c_{\hx} = (h_{\hy,\hx}^u \circ h_{\hx,\hy}^s)_*\hmu^c_{\hx} = \hmu^c_{\hx}$.
\end{remark}

\subsection{Linear cocycles with holonomies}

Let $\hA:\hM \to \GL$ be a $\alpha$-H\"older continuous map for some $\alpha>0$.
By this we mean that there exists $C>0$ such that
\begin{equation*}
\|\hA(\hp)-\hA(\hq)\| \leq C \dist_{\hM}(\hp,\hq)^{\alpha} \quad\text{for any } \hp, \hq\in \hM.
\end{equation*}
The linear cocycle defined by $\hA$ over the transformation $\hf:\hM\to\hM$ is the map
$\hF:\hM\times \field^d\to \hM\times \field^d$ defined by
\begin{equation*}
\hF(\hp,v)=\big(\hf(\hp),\hA(\hp)v\big).
\end{equation*}
In what follows we take the cocycle to admit \emph{stable} and \emph{unstable linear holonomies}.
Let us explain this.

By \emph{stable linear holonomies} we mean a family of linear maps
$H^{s}_{\hp,\hq}:\field^d\to\field^d$,
defined for each $\hp, \hq \in \hM$ with $\hq \in W^{ss}_{\loc}(\hp)$ and such that, for some constant $L>0$,
\begin{enumerate}
\item[(a)] $H^{s}_{\hf^j(\hp),\hf^j(\hq)}=\hA^j(\hq) \circ H^s_{\hp,\hq} \circ \hA^j(\hp)^{-1}$ for every $j\geq 1$;
\item[(b)] $H^{s}_{\hp,\hp}=\id$ and $H^s_{\hp,\hq}=H^s_{\hz,\hq}\circ H^s_{\hp,\hz}$ for any $\hz\in W^{ss}_{\loc}(\hp)$;
\item[(c)] $\|H^s_{\hp,\hq}-\id\| \leq L \dist_{\hM}(\hp,\hq)^{\alpha}$;
\item[(d)] $(\hp,\hq) \mapsto H^s_{\hp,\hq}$ is uniformly continuous on
           $\lbrace (\hp,\hq): \hq \in W^{ss}_{\loc}(\hp)\rbrace \subset \hM\times \hM$.
\end{enumerate}
\emph{Unstable linear holonomies} $H^{u}_{\hp,\hq}:\field^d\to\field^d$ are defined analogously,
for the pairs $(\hp,\hq)\in\hM\times \hM$ with $\hq \in W^{uu}_{\loc}(\hp)$.

These notions were introduced in \cite{BGV03,ASV13}, where they were called simply stable and unstable holonomies.
We add the adjective \emph{linear} to avoid any confusion with the holonomies $h^s$ and $h^u$ in the previous paragraph,
that concern only the base dynamics, whereas $H^s$ and $H^u$ pertain to the linear cocycle.

It was shown in \cite{ASV13} that stable and unstable linear holonomies do exist, in particular,
when the cocycle is \emph{fiber-bunched}. By the latter we mean that there exist $C>0$ and $\theta <1$ such that
\begin{equation*}
\Vert \hA^n(\hp)\Vert \Vert \hA^n(\hp)^{-1}\Vert \lambda ^{n \alpha} \leq C \theta ^n
\quad\text{for every $\hp\in\hM$ and $n\geq 0$,}
\end{equation*}
where $\lambda$ is a hyperbolicity constant for $\hf$ as in conditions (i)-(ii) above. Then stable and unstable
linear holonomies are given by
$$
H^{s}_{\hp,\hq}=\lim_{n\to \infty}\hA^{n}(\hq)^{-1} \circ \hA^{n}(\hp),
\quad\text{and}\quad
H^{u}_{\hp,\hq}=\lim_{n\to\infty} \hA^{-n}(\hq)^{-1} \circ\hA^{-n}(\hp)
$$


\subsection{Pinching and twisting}\label{ss.pinching_twisting}

Now we state our criterion for simplicity of the Lyapunov spectrum. It is assumed that the cocycle admits
stable and unstable linear holonomies.

We call $\hF$ \emph{pinching} if there exists some fixed (or periodic) vertical leaf $\ell=\{\hx\}\times K$
such that the restriction to $\ell$ of every exterior power $\Lambda^k\hF$ has simple Lyapunov spectrum,
relative to the $\hf_{\hx}$-invariant measure $\hmu^c_{\hx}$ (recall Remark~\ref{r.homoclinic}).
In other words, the Lyapunov exponents $\lambda_1, \cdots, \lambda_d$ are such that,
for each $1 \leq k \le d-1$ and $\hmu^c_{\hx}$-almost every $t\in K$, the sums
$$
\lambda_{i_1}(\hx,t)+ \cdots +\lambda_{i_k}(\hx,t),
\qquad 1 \le i_1 < \cdots < i_k \le d
$$
are all distinct.

Next, take $\hF$ to be pinching and let $\field^d = E^1(t) \oplus \cdots \oplus E^d(t)$ be the Oseledets
decomposition at each point $(\hx,t)\in\ell$. This is defined on a full $\hmu^c_{\hx}$-measure set.
Choose (measurably) unit vectors $e^i(t)\in E^i(t)$. Let $\hy$ be a homoclinic point of $\hx$.
Given $t\in S^1$, denote $t_1=h^u_{\hx,\hy}(t)$ and $t_2=h^s_{\hy,\hx}(t_1)$.
Then define $\cB(t)$ to be the matrix of the linear map
\begin{equation}\label{eq.HH}
H^u_{(\hy,t_1),(\hx,t)} \circ H^s_{(\hx,t_2),(\hy,t_1)}:\field^d \to \field^d
\end{equation}
relative to the bases $\{e^1(t_2), \dots, e^d(t_2)\}$ and $\{e^1(t), \dots, e^d(t)\}$,
respectively. Observe that $t_2$ also varies on a full $\hmu^c_{\hx}$-measure set,
since the composition of the holonomies preserves $\hmu^c_{\hx}$ (Remark~\ref{r.homoclinic}).

We call the cocycle $\hF$ \emph{twisting} if, for some choice of the homoclinic point $\hy$,
all the algebraic minors $m_{I,J}(t)$ of $\cB(t)$ are non-zero for $\hmu^c_{\hx}$-almost every
$t\in K$ and they decay sub-exponentially along the orbits of $\hf_{\hx}$, meaning that
\begin{equation}\label{eq.suffar}
\limlog \vert m_{I,J}(\hf_{\hx}^{n}(t))\vert=0
\quad\text{for $\hmu^c_{\hx}$-almost every $t\in K$}
\end{equation}
and any proper subsets $I$ and $J$ of $\{1, \dots, d\}$.

\begin{remark}
It is well-known (see~\cite[Proposition~2.2]{LeS82} or \cite[Corollary~3.11]{LLE}) that the
property \eqref{eq.suffar} holds whenever the function
$\log |m_{I,J}|\circ \hf_{\hx}-\log |m_{I,J}|$ is $\hmu^c_{\hx}$-integrable.
In Example~\ref{ex.main} we show how to check the twisting condition in a specific case,
using this observation.
\end{remark}

Finally, we say that the cocycle $\hF$ is \emph{simple} if it is both pinching and twisting (in addition to
admitting stable and unstable linear holonomies).

\subsection{Main statement}

Let $H^{\alpha}(\hM)$ denote the space of all $\alpha$-H\"older continuous maps $\hA:\hM\to\GL$.
The norm
\begin{equation*}
\| \hA \|_{\alpha}
= \sup _{\hp\in \hM} \|\hA(\hp)\| + \sup _{\hp\neq\hq} \frac{\| \hA(\hp)-\hA(\hq)\|}{\dist_{\hM}(\hp,\hq)^{\alpha}}
\end{equation*}
defines a topology in $H^{\alpha}(\hM)$ that we call \emph{$\alpha$-H\"older topology}.

We say that $A$ is a \emph{continuity point for the Lyapunov exponents} if, for every $1\leq i\leq d$,
the function $\lambda_i:H^\alpha(\hM)\to \real$ is continuous in $A$.

\begin{ltheorem}\label{teo.B}
Let $\hf:\hM\to\hM$ be a partially hyperbolic skew-product with mostly neutral center direction and
$\hmu$ be a $\hf$-invariant measure with partial product structure.
Suppose that $\hA\in H^{\alpha}(\hM)$ is such that the corresponding linear cocycle
$\hF:\hM\times \field^d\to \hM\times \field^d$ over $\hf$ is simple.
Then $\hA$ is a continuity point for the Lyapunov exponents, and the Lyapunov spectrum of $\hF$ is simple.
\end{ltheorem}

By continuity, the Lyapunov spectrum remains simple for every perturbation of $\hF$, that is, for the linear cocycle
over $\hf$ corresponding to every element of $H^\alpha(\hM)$ sufficiently close to $\hA$.

\begin{example}\label{ex.main}
The example presented in the Introduction satisfies all the conditions in Theorem~\ref{teo.B}, and so the conclusion
applies to it. In order to explain this, let us formalize the example as follows.

Let $\hsigma:\hSigma\to\hSigma$ be the shift map on $\hSigma=\{0,1\}^\integer$ and let $\hnu$ be the Bernoulli measure
$(\delta_0/2 + \delta_1/2)^\integer$. Let $\hf:\hSigma\times S^1 \to \hSigma \times S^1$ be defined by
$\hf(\hx,t)=(\hsigma(\hx),f_{x_0}(t))$ and $\hmu$ be the product of $\hnu$ by the Haar measure on $S^1$.
Finally, let $\hM=\hSigma\times S^1$ and $\hF:\hM\times\real^3\to\hM\times\real^3$ be given by
$\hF((\hx,t),v)=(\hf(\hx,t),\hA(\hx,t)v)$ with $\hA(\hx,t)=A_{x_0}(t)$.

It is clear that $\hf$ is a skew-product with mostly neutral central direction, and $\hmu$ has partial product structure.
Moreover, $\hmu$ is ergodic. Indeed, let $\zeta$ be any ergodic component. Since $\hmu$ projects down to $\hnu$,
which is ergodic, $\zeta$ must project to $\hnu$. The Lyapunov exponent of $\hf$ along the vertical $S^1$ fibers is zero
and so, by the Invariance Principle of~\cite{Extremal}, there exists a disintegration $\{\zeta_{\hz}:\hz\in\hSigma\}$ of
$\zeta$ along the $S^1$ fibers which is invariant under stable and unstable holonomies and is continuous.
Consider the fixed point $\hx=(\dots, 0, 0, 0, \dots)$ of $\hsigma$. Since $f_0$ is uniquely ergodic, because we took
$\omega_0$ to be irrational, $\zeta_{\hx}$ must coincide with the Haar measure on $S^1$. Then, by holonomy invariance,
$\zeta_{\hz}$ is uniquely determined at every point, which proves that the ergodic component is unique.

Now consider the homoclinic point $\hy=(\dots, 0, 1, 0, \dots)$ of $\hx$, where the sole non-zero entry is in
position $0$. The corresponding stable and unstable holonomies are given by
$$
\begin{aligned}
h^{s}_{\hx,\hy}
& 
  = \lim_{n\to \infty} \big(f_0 \circ \cdots f_0 \circ f_1\big)^{-1} \circ \big(f_0 \circ \cdots \circ f_0\big)
  = f_1^{-1} \circ f_0 \quad\text{and}\\
h^{u}_{\hy,\hx}
& 
  = \lim_{n\to\infty} \big(f_0^{-1} \circ \cdots f_0^{-1}\big)^{-1} \circ \big(f_0^{-1} \circ \cdots f_0^{-1}\big)
  = \id.
\end{aligned}
$$
Similarly, the stable and unstable linear holonomies are given by
$$
\begin{aligned}
H^{s}_{(\hx,s),(\hy,t)}
& 
  = A_1(t)^{-1} \circ A_0 \quad\text{and}\quad
H^{u}_{(\hy,t),(\hx,t)}
& 
  = \id
\end{aligned}
$$
where $s = h^{s}_{\hy,\hx}(t) = f_0^{-1}(f_1(t)) = t+\omega_1-\omega_0$.

It is also clear that $\hF$ is pinching: its restriction to $\ell=\{\hx\}\times S^1$ corresponds to the constant cocycle
$$
A_0(t)=\left(\begin{array}{ccc} 2 & 0 & 0 \\ 0 & 1 & 0 \\ 0 & 0 & 2^{-1} \end{array}\right),
$$
whose Lyapunov spectrum is obviously simple. We are left to check the twisting condition.

Since $A_0$ is constant, so is its Oseledets decomposition $E^1(t)\oplus E^2(t) \oplus E^3(t)$,
with $E^1=\spann\{(1,0,0)\}$, $E^2=\spann\{(0,1,0)\}$ and $E^3=\spann\{(0,0,1)\}$.
This shows that $\cB(t)$ is just the matrix of
$$
H^u_{(\hy,t),(\hx,t)} \circ H^s_{(\hx,s),(\hy,t)}
= A_1(t)^{-1} \circ A_0
= R_3(-t)R_2(-t)R_1(-t)A_0,
$$
relative to the canonical basis of $\real^3$. 
It is straightforward to check that all the minors $m_{I,J}(t)$ of this matrix are analytic functions of $t$ not identically zero.
In particular, all their zeros have finite order and, consequently, the functions $\log|m_{I,J}|$ are integrable.
It follows that
$$
\lim_{n\to\infty} \frac 1n \log|m_{I,J}(f_0^n(t))| = 0 \text{ for Lebesgue almost every $t\in S^1$,}
$$
which shows that $\hF$ is twisting.
\end{example}

In many contexts of linear cocycles over \emph{hyperbolic} systems, simplicity turns out to be a
generic condition: it contains an open and dense subset of cocycles (precise statements can
be found in Viana~\cite{LLE}).  This is related to the fact that in the hyperbolic setting
pinching and twisting are just transversality conditions, and so they clearly hold on the
complement of suitable submanifolds with positive codimension.

It would be interesting to find whether this extends to the present partially hyperbolic setting.
In dimension $d=2$, simplicity is equivalent to positivity of the largest Lyapunov exponent and
that has been shown to hold for an open and dense subset of linear cocycles over partially hyperbolic
skew-products with mostly neutral center direction, by Poletti~\cite{Pol}.
In general, by Theorem~\ref{teo.B}, it would suffice to prove density of our pinching and twisting
conditions. Density of pinching corresponds, roughly, to density of simplicity for linear cocycles
over quasi-periodic transformations, a subject that does not seem to have been much investigated
beyond the 2-dimensional case (but see~\cite{DuK}). On the other hand, the arguments in
Example~\ref{ex.main} suggest that twisting is probably a rather mild requirement on the cocycle.

\subsection{Outline of the proof}

For every $1\leq \ell<d$, we want to find complementary $\hF-$invariant measurable sections
\begin{equation}\label{eq.sections}
\xi:\hM\to \grassl \quand \eta:\hM\to \grassd
\end{equation}
such that the Lyapunov exponents of $\hF$ along $\xi$ are strictly larger than the Lyapunov exponents
along $\eta$.

The starting point is to reduce the problem to the case when the maps $\hf_{\hx}$ and the matrices
$\hA(\hx,t)$ depend on $\hx$ only through its positive part $x^u$.
This we do in Section~\ref{s.martingale2}, using the stable linear holonomies to conjugate the original
dynamics to others with these properties.
Then $\hf:\hM\to\hM$ projects to a transformation $f:M\to M$ on $M=\Sigma^+\times K$ which
is a skew-product over the one-sided shift $\sigma:\Sigma^+\to\Sigma^+$ and, similarly,
the linear cocycle $\hF:\hM\times\field^d\to\hM\times\field^d$ projects to a linear cocycle
$F:M\times\field^d\to M\times\field^d$ over the transformation $\hf$.

We also denote by $\hF$ and $F$ the actions
$$
\begin{aligned}
& \hF: \hM\times\grassl\to\hM\times\grassl \text{ and }\\
& F: M\times\grassl\to M\times\grassl
\end{aligned}
$$
induced by the two linear cocycles on the Grassmannian bundles.
Still in Section~\ref{s.martingale2}, using very classical arguments, we relate the invariant measures
of $\hf$ and $\hF$ with those of $f$ and $F$, respectively.

In Section~\ref{s.ustates} we study \emph{$u$-states}, that is, $\hF$-invariant probability measures
$\hm$ whose Rokhlin disintegrations $\{\hm_{\hx}: \hx\in\hSigma\}$ are invariant under unstable
holonomies, as well as the corresponding $F$-invariant probability measures $m$.
Here we meet the first important new difficulty arising from the fact that $\hf$ is only partially
hyperbolic.
Indeed, in the hyperbolic setting such measures $m$ are known to admit \emph{continuous}
disintegrations $\{m_x: x\in M\}$ along the fibers $\{x\}\times\grassl$ and this fact plays a
key part in the arguments of Bonatti-Viana~\cite{BoV04} and Avila-Viana~\cite{AvV1}.

In the partially hyperbolic setting, the situation is far more subtle:  the disintegration
$\{m_x: x\in\Sigma\}$ along the sets $\{x\} \times K\times\grassl$ is still continuous,
but there is no reason why this should extend to the disintegration $\{m_{x,t}: (x,t) \in M\}$
along the sets $\{(x,t)\}\times\grassl$, which is what one really needs.
The way we make up for this is by proving a kind of $L^1$-continuity:
if $(x_i)_i\to x$ in $\Sigma$ then $(m_{x_i,t})_i\to m_{x,t}$ in $L^1(\mu^c)$.
See Proposition~\ref{p.L1} for the precise statement.

This also leads to our formulating the arguments in terms of measurable sections  $K\to \grassl$
of the Grassmannian bundle, which is perhaps another significant novelty in this paper.
The properties of such sections are studied in Section~\ref{s.zeromes}.
The key result (Proposition~\ref{p.medidazero}) is that, under pinching and twisting, the graph of
every invariant Grassmannian section has zero $m_x$-measure, for every $x\in M$ and any
$u$-state $\hm$.

These results build up to Section~\ref{s.dirac}, where we prove that every $u$-state $\hm$ has an
atomic disintegration. More precisely (Theorem~\ref{t.Dirac}), there exists a measurable
section $\xi:\hM\to\grassl$ such that, given any $u$-state $\hm$ on
$\hM\times\grassl$, we have
\begin{equation}\label{eq.convDirac}
\hm_{\hx,t}=\delta_{\xi(\hx,t)}
\quad\text{for $\hmu$-almost every $(\hx,t)\in\hM$.}
\end{equation}
Thus we construct the invariant section $\xi:\hM\to\grassl$ in \eqref{eq.sections}.

To find the complementary invariant section $\eta:\hM\to\grassd$, in Section~\ref{s.orthogonal}
we apply the same procedure to the adjoint cocycle $\hF^*$, that is, the linear cocycle defined
over $\hf^{-1}:\hM\to\hM$ by the function
$$
(\hx,t) \mapsto \hA^*(x,t) = \text{ adjoint of } \hA(\hf^{-1}(\hx,t)).
$$
We check (Proposition~\ref{p.twist.adj}) that this cocycle $\hF^*$ is pinching and twisting if and only if $\hF$ is.
So, the previous arguments yield a $\hF^*$-invariant section  $\xi^{\ast}:\hM\to \grassl$ related
to the $u$-states of $\hF^*$. Then we just take $\eta=\big(\xi^{\ast}\big)^{\perp}$.

Finally, in Section~\ref{s.teoB} we check that the eccentricity, or lack of conformality, of the iterates
$\hA^n$ goes to infinity $\hmu$-almost everywhere (see Proposition~\ref{p.infinito}) and we use
this fact to deduce that every Lyapunov exponent of $\hF$ along $\xi$ is strictly larger than any of
the Lyapunov exponents of $\hF$ along $\eta$. At this stage the  arguments are again very classical.
This concludes the proof of Theorem~\ref{teo.B}.

In Appendix~\ref{a.denseL1} we show that continuous maps are dense in the corresponding $L^1$ space,
whenever the target space is geodesically convex. This is probably well known,
but we could not find explicit references.


\section{Disintegration along center leaves}\label{s.martingale1}

Let us start by fixing some terminology. We use $\id_Y$ to denote the identity transformation in a set $Y$.
Similarly, $\dist_Y$ will always denote the distance in a metric space $Y$.

Let $\hSigma=\Sigma^- \times \Sigma^+$, where $\Sigma^-=X^{\integer_{<0}}$ and $\Sigma^+=X^{\integer_{\geq 0}}$.
Thus we write every $\hx\in\hSigma$ as $(x^s, x^u)$ with $x^s\in\Sigma^{-}$ and $x^u\in\Sigma^{+}$.
For simplicity, we also write $\Sigma=\Sigma^+$ and $x=x^u$.
Let $P:\hSigma \to \Sigma$ be the canonical projection given by $P(\hx)=x$ and let $\sigma:\Sigma \to \Sigma$ be the one-sided shift.
Given points $\hx\in\hSigma$ and $\hq\in\hM$, denote
$$
x_n=P(\hsigma^{-n}(\hx)) \quand q_n = (P\times\id_K)(\hf^{-n}(\hq))
$$
for each $n\in\natural$.

We also consider $M=\Sigma \times K$ and the projection $\mu=(P\times\id_K)_*\hmu$.
In other words,
\begin{equation}\label{eq.mu}
\mu = \rho(x,t) \, \mu^u \times \mu^c
\quad\text{where}\quad
\rho(x,t) = \int\hrho(x^s,x,t) \, d\mu^s(x^s).
\end{equation}
Similarly to \eqref{eq.hatvarrho}, for each $x\in M$ let
\begin{equation}\label{eq.varrho}
\varrho(x,\cdot) = \frac{\rho(x,\cdot)}{\int \rho(x,t) \, d\mu^{c}(t)}
\quad\text{and}\quad
\mu^c_{x}=\varrho(x,\cdot) \mu^c.
\end{equation}
Note that $\{\mu^{c}_{x}: x\in \Sigma\}$ is a continuous disintegration of $\mu$
with respect to the partition $\cP=\lbrace \lbrace x\rbrace \times K: x\in \Sigma \rbrace$.

In this section we derive some useful properties of these disintegrations
\eqref{eq.hatvarrho} and \eqref{eq.varrho}.
For this, we assume that the base dynamics is such that each $\hf_{\hx}:K\to K$
along the center direction depends only on $x=P(\hx)$.
This is no restriction in our setting, as we will see in Section~\ref{s.martingale2}.
Then there exists $f:M\to M$ of the form $f(x,t)=(\sigma(x),f_x(t))$ such that
\begin{equation*}
(P \times \id_K) \circ \hf = f \circ (P \times \id_K)
\end{equation*}

\subsection{Holonomy invariance}\label{ss.jacobians}

We call the \emph{extremal center Lyapunov exponents} of $\hf$ the limits
$$
\lambda^{c+}(\hx,t)=\lim_n \frac{1}{n}\log \big\| D\hf^{n}_{\hx}(t)\big\|
\quand
\lambda^{c-}(\hx,t)=\lim_n -\frac{1}{n}\log \big\| D\hf^{n}_{\hx}(t)^{-1}\big\|.
$$
for $\hxt\in \hSigma\times K$. The Oseledets theorem \cite{Ose68} ensures that
these numbers are well defined at $\hmu$-almost every point. In our situation,
since the maps $\hf^{n}_{\hx}$ have uniformly bounded derivatives:

\begin{lemma}\label{l.lyapunovcentral}
$\lambda^{c+}=\lambda^{c-}=0$.
\end{lemma}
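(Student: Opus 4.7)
The plan is to derive both inequalities $\lambda^{c+}\le 0$ and $\lambda^{c-}\ge 0$ directly from the mostly neutral hypothesis, by applying it in the forward and backward time directions respectively. Since the general relation $\lambda^{c-}\le\lambda^{c+}$ between the smallest and largest center Lyapunov exponents always holds (they are the limits of $\tfrac{1}{n}\log$ applied to the smallest and largest singular values of $D\hf^n_{\hx}(t)$), the two one-sided bounds sandwich both numbers at $0$.

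For the first inequality, I would invoke the mostly neutral assumption to produce $C>0$ such that $\|D\hf^n_{\hx}\|\le C$ for every $\hx\in\hSigma$ and every $n\in\integer$. Taking $n>0$ in this bound immediately gives $\tfrac{1}{n}\log\|D\hf^n_{\hx}(t)\|\le \tfrac{\log C}{n}\to 0$, hence $\lambda^{c+}\le 0$ at every point where the limit exists (in particular $\hmu$-a.e., by Oseledets).

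For the reverse inequality on $\lambda^{c-}$, the only observation needed is that $(\hf^n_{\hx})^{-1} = \hf^{-n}_{\hsigma^n(\hx)}$, which one verifies directly by unrolling the composition defining $\hf^n_{\hx}$. The chain rule then yields
\begin{equation*}
D\hf^n_{\hx}(t)^{-1} = D\hf^{-n}_{\hsigma^n(\hx)}\bigl(\hf^n_{\hx}(t)\bigr),
\end{equation*}
so the mostly neutral bound applied to the index $-n$ at the point $\hsigma^n(\hx)$ gives $\|D\hf^n_{\hx}(t)^{-1}\|\le C$. Therefore $-\tfrac{1}{n}\log\|D\hf^n_{\hx}(t)^{-1}\|\ge -\tfrac{\log C}{n}\to 0$, giving $\lambda^{c-}\ge 0$ and completing the argument. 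There is no substantive obstacle here: the hypothesis is stated symmetrically in $n\in\integer$ and the entire content of the lemma is this symmetry.
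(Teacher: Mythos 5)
Your argument is correct and is exactly the paper's proof, which simply records the lemma as a trivial consequence of the uniform bound $\|D\hf^n_{\hx}\|\le C$ for all $n\in\integer$; you have merely made explicit the identity $(\hf^n_{\hx})^{-1}=\hf^{-n}_{\hsigma^n(\hx)}$ that the paper leaves implicit. No further comment is needed.
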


\begin{remark}\label{equicontinuitysuffices}
When the maps $\hf^{n}_{\hx}$ are $C^{1+\epsilon}$, equicontinuity alone suffices to get the conclusion of
Lemma~\ref{l.lyapunovcentral}. This can be shown using Pesin theory, as follows.

Suppose that $\lambda^{c+}>0$. Then we have a Pesin unstable manifold defined $\hmu$-almost everywhere.
This implies that there exist $\hx\in \hSigma$ and $t\neq s \in K$ such that
\begin{equation*}
\dist_K\big(\hf^{-n}_{\hx}(t),\hf^{-n}_{\hx}(s)\big)\to 0.
\end{equation*}
Then, given points $t$ and $s$ in the unstable manifold and given any $\delta>0$, there exists $n$ such
that $\dist_K(\hf^{-n}_{\hx}(t),\hf^{-n}_{\hx}(s))<\delta$. This implies that the family is not equicontinuous.
The proof for $\lambda^{c-}$ is analogous.
\end{remark}

Let $\pi_1:\hM\to \hSigma$ be the projection $\pi_1(\hx,t)=\hx$, recall that we also assume that $\hf:\hM\to \hM$
admits s-holonomies and u-holonomies and $\hmu$ has
partial product structure. That implies that $(\pi_1)_*\hmu$ has local product
structure in the sense of \cite{Extremal}.
%
%
%

\begin{lemma}
The map $\hx \mapsto \hmu^{c}_{\hx}$ is continuous, and the disintegration $\{\hmu^c_{\hx}:\hx\in\hSigma\}$
is both $u$-invariant and $s$-invariant:
  \begin{itemize}
  \item[(a)] $\left(h^{u}_{\hx,\hy}\right)_{\ast}\hmu^{c}_{\hx}=\hmu^c_{\hy}$ for every $\hx\in W^{u}(\hy)$ and
  \item[(b)] $\left(h^{s}_{\hx,\hz}\right)_{\ast}\hmu^{c}_{\hx}=\hmu^c_{\hz}$ for every $\hx\in W^{s}(\hz)$.
  \end{itemize}
\end{lemma}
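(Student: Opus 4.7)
The plan breaks into continuity, then the two invariance relations (a) and (b), with a common mechanism.

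For continuity, I would argue directly from \eqref{eq.hatvarrho}: the numerator $\hrho(\hx,\cdot)$ is continuous on $K$ uniformly for $\hx$ in compacts, $\tilde\rho$ is continuous by dominated convergence, and \eqref{eq.boundedness} bounds $\tilde\rho$ away from $0$ and $\infty$; so $\hvro(\hx,\cdot)$ varies continuously in $C(K)$ and hence $\hmu^c_{\hx}$ is total-variation continuous in $\hx$.

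For the invariance, the starting point is \eqref{eq.disintegration}, which iterates to $(\hf^n_{\hx})_*\hmu^c_{\hx}=\hmu^c_{\hsigma^n(\hx)}$ for every $n\in\integer$. Then for item (a), with $\hx\in W^u_{loc}(\hy)$, composing the $m$-th backward identities at $\hx$ and $\hy$ gives
\begin{equation*}
\bigl((\hf^{-m}_{\hy})^{-1}\circ\hf^{-m}_{\hx}\bigr)_*\hmu^c_{\hx}=\bigl((\hf^{-m}_{\hy})^{-1}\bigr)_*\hmu^c_{\hsigma^{-m}(\hx)}.
\end{equation*}
Testing against continuous $\phi:K\to\real$ and letting $m\to\infty$, the left-hand side converges to $\int\phi\,d(h^u_{\hx,\hy})_*\hmu^c_{\hx}$, since $(\hf^{-m}_{\hy})^{-1}\circ\hf^{-m}_{\hx}\to h^u_{\hx,\hy}$ uniformly by the very definition of the unstable holonomy. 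The right-hand side can be split as
\begin{equation*}
\int\phi\,d\hmu^c_{\hy}\;+\;\int\phi\circ(\hf^{-m}_{\hy})^{-1}\,d\bigl(\hmu^c_{\hsigma^{-m}(\hx)}-\hmu^c_{\hsigma^{-m}(\hy)}\bigr),
\end{equation*}
where the first summand comes from the invariance identity applied at $\hy$. The second summand is bounded by $\|\phi\|_\infty\,\|\hmu^c_{\hsigma^{-m}(\hx)}-\hmu^c_{\hsigma^{-m}(\hy)}\|_{TV}$, which vanishes because $d_{\hSigma}(\hsigma^{-m}(\hx),\hsigma^{-m}(\hy))\to 0$ and the disintegration is total-variation continuous. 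Hence $(h^u_{\hx,\hy})_*\hmu^c_{\hx}=\hmu^c_{\hy}$.

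Item (b) is symmetric, using forward iterates and the contraction of $W^s$ under $\hsigma$. Under the standing assumption of this section that $\hf_{\hx}$ depends only on $P(\hx)$, one has $P(\hx)=P(\hz)$ for $\hx\in W^s_{loc}(\hz)$, whence $\hf^n_{\hx}=\hf^n_{\hz}$ for $n\ge 0$ and therefore $h^s_{\hx,\hz}=\id_K$; the same splitting with $n\to+\infty$ then yields the slightly stronger statement $\hmu^c_{\hx}=\hmu^c_{\hz}$, which implies (b), and the general case $\hx\in W^s(\hz)$ follows by pre-composing with iterates of $\hsigma$. The only mildly delicate point is the simultaneous passage to the limit in the diffeomorphism and in the measure on the right-hand side, controlled by the triangle-inequality splitting above; the total-variation continuity of $\hx\mapsto\hmu^c_{\hx}$ is the decisive ingredient.
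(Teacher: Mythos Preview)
Your argument is correct and takes a genuinely different route from the paper's. The paper simply invokes Theorem~D of \cite{Extremal}, which produces abstractly a continuous $su$-invariant disintegration $\{\tmu^c_{\hx}\}$, and then identifies it with $\{\hmu^c_{\hx}\}$ via essential uniqueness and continuity. You instead give a direct, self-contained proof: the explicit formula \eqref{eq.hatvarrho} with a continuous density yields total-variation continuity of $\hx\mapsto\hmu^c_{\hx}$, and then the holonomy invariance is read off from the very definition of $h^u$ and $h^s$ as uniform limits of $(\hf^n_{\hy})^{-1}\circ\hf^n_{\hx}$, combined with the iterated version of \eqref{eq.disintegration} and the triangle-inequality splitting you wrote down. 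Your approach is more elementary in this specific setting precisely because the partial product structure hands you TV-continuity for free; the paper's route, by contrast, applies in the greater generality of \cite{Extremal}, where no such explicit density is assumed. One small remark: your aside about assumption~(A) making $h^s$ trivial is correct but unnecessary for the argument---the symmetric limit computation you sketch for (b) works just as well without it.
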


\begin{proof}
By Theorem~D in \cite{Extremal}, there exists a disintegration $\{\tmu^c_{\hx}:\hx\in\hSigma\}$ which is
continuous, $u$-invariant and $s$-invariant. By essential uniqueness, $\tmu^c_{\hx} = \hmu^c_{\hx}$ for $\hmu$-almost
every $x$. Since both disintegrations are continuous, it follows that they coincide, and so
$\{\hmu^c_{\hx}:\hx\in\hSigma\}$ is continuous, $u$-invariant and $s$-invariant, as claimed.
\end{proof}

\begin{corollary}\label{c.muc}
$\mu^{c}_{x}=\hmu^{c}_{\hx}$ for every $\hx\in\hSigma$, where $x=P(\hx)$.
\end{corollary}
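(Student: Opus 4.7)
The strategy is to use the $s$-invariance from the preceding lemma to show that $\hmu^c_{\hx}$ depends only on the positive part $x = P(\hx)$, and then identify the resulting family with $\mu^c_x$ by uniqueness of disintegration.

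The key initial observation is that the standing assumption of this section — that $\hf_{\hx}$ depends on $\hx$ only through $x = P(\hx)$ — trivializes the stable holonomy of $\hf$ along center fibers. Indeed, if $\hy \in W^s_{loc}(\hx)$ then $P(\hsigma^n(\hx)) = P(\hsigma^n(\hy))$ for every $n \geq 0$, so $\hf^n_{\hx} = \hf^n_{\hy}$ for every $n \geq 0$, and therefore
$$
h^s_{\hx,\hy} = \lim_{n\to\infty} \bigl(\hf^n_{\hy}\bigr)^{-1} \circ \hf^n_{\hx} = \id_K.
$$
Combined with the $s$-invariance $(h^s_{\hx,\hy})_*\hmu^c_{\hx} = \hmu^c_{\hy}$ established in the previous lemma, this yields $\hmu^c_{\hx} = \hmu^c_{\hy}$ whenever $P(\hx) = P(\hy)$. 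So $\hx \mapsto \hmu^c_{\hx}$ factors through $P$, and defines a continuous family $\{\tmu^c_x : x\in\Sigma\}$ by $\tmu^c_{P(\hx)} = \hmu^c_{\hx}$.

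Next I would verify that $\{\tmu^c_x\}$ is a disintegration of $\mu$ along the partition $\cP$. For any bounded measurable $\phi : M \to \real$, applying the disintegration of $\hmu$ to the lift $\phi \circ (P\times\id_K)$ gives
$$
\int \phi \, d\mu = \int \phi \circ (P\times\id_K) \, d\hmu = \int \left[\int \phi(P(\hx),t) \, d\hmu^c_{\hx}(t)\right] d(\pi_1)_*\hmu(\hx).
$$
Substituting $\hmu^c_{\hx} = \tmu^c_{P(\hx)}$ and pushing forward by $P$ yields
$$
\int \phi \, d\mu = \int \left[\int \phi(x,t) \, d\tmu^c_x(t)\right] d\mu^\Sigma(x),
$$
where $\mu^\Sigma = P_*(\pi_1)_*\hmu$ is the marginal of $\mu$ on $\Sigma$. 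This is precisely the defining property of a disintegration of $\mu$ with respect to $\cP$.

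Finally, by the essential uniqueness of disintegrations, $\tmu^c_x = \mu^c_x$ for $\mu^\Sigma$-almost every $x \in \Sigma$. Since both families are continuous in $x$ (the continuity of $\tmu^c$ is inherited from $\hmu^c$ by composing with any continuous local section of $P$, and the continuity of $\mu^c$ is built into its definition via $\varrho$), equality extends to every $x \in \Sigma$, giving the claim. The only step with a subtlety is the last one, matching continuous disintegrations everywhere rather than almost everywhere, but this is immediate from density of the support combined with continuity in the weak-$*$ topology.
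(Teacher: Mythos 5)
Your argument is correct, and its first half coincides with the paper's: both proofs observe that the standing assumption of this section forces $h^s_{\hx,\hy}=\id_K$ on local stable sets, and then invoke the $s$-invariance from the preceding lemma to conclude that $\hmu^c_{\hx}$ depends only on $x=P(\hx)$. Where you diverge is the identification step. The paper finishes with a one-line direct computation: by the explicit formulas \eqref{eq.mu} and \eqref{eq.varrho}, $\mu^c_x$ is precisely the weighted average $\int \hmu^c_{\hy}\,d\hnu_x(\hy)$ of the conditionals over the stable fiber $\{P=x\}$ (the weights being proportional to $\tilde\rho$), and since all these conditionals coincide the average equals $\hmu^c_{\hx}$ — an identity valid for \emph{every} $x$ with no appeal to uniqueness of disintegrations. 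You instead verify abstractly that $x\mapsto\hmu^c_{\hx}$ defines a disintegration of $\mu$ over its marginal $\nu$ and invoke essential uniqueness plus continuity to upgrade from $\nu$-a.e.\ equality to equality everywhere. That route is legitimate, but its last step genuinely needs $\supp\nu=\Sigma$ (equivalently, $\mu^u$ positive on open sets), which you assert as ``density of the support'' without justification; it does not follow from the partial product structure alone, although the paper does take it for granted elsewhere (e.g.\ in Corollary~\ref{c.invcont2}). The paper's averaging computation buys you the pointwise statement for free, without the uniqueness machinery and without any support hypothesis; your version buys a more structural argument that would survive in settings where the explicit densities are not available, at the cost of that extra (here implicit) assumption.
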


\begin{proof}
The assumption that $\hf_{\hx}$ only depends on $x=P(\hx)$ implies that $h^s_{\hx,\hy}=\id_K$ for every $\hx$
and $\hy$ in the same stable set. By the previous lemma, this implies that $\hmu^c_{\hx}=\hmu^c_{\hy}$
whenever $\hx$ and $\hy$ are in the same stable set. Then,
$$
\mu^c_x=\int \hmu^c_{\hy} \, d\hmu^s_{x}(\hy) = \hmu^c_{\hx}
$$
for any $\hx$ with $P(\hx)=x$.
\end{proof}

We also have
\begin{corollary}\label{c.inv.muc}
 The disintegration $\{\mu^{c}_{x}: x\in\Sigma\}$ is $f$-invariant, in the sense that
 $\big(f_{x}\big)_{\ast}\mu^c_x=\mu_{\sigma(x)}$ for every $x\in\Sigma$.
\end{corollary}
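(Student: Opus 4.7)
The claim is essentially a translation of the invariance relation \eqref{eq.disintegration} from the two-sided setting down to the one-sided setting, so the plan is simply to stitch together three ingredients that are already in place: (i) the standing assumption in this section that $\hf_{\hx}$ depends on $\hx$ only through $x=P(\hx)$, so that $\hf_{\hx}=f_x$ under the natural identification; (ii) the identity $(\hf_{\hx})_*\hmu^c_{\hx}=\hmu^c_{\hsigma(\hx)}$ provided by \eqref{eq.disintegration}; and (iii) Corollary~\ref{c.muc}, which states that $\mu^c_x=\hmu^c_{\hx}$ whenever $P(\hx)=x$.

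Concretely, I would fix $x\in\Sigma$, lift it to any $\hx\in\hSigma$ with $P(\hx)=x$, observe that $P(\hsigma(\hx))=\sigma(x)$, and then compute
\begin{equation*}
(f_x)_*\mu^c_x = (\hf_{\hx})_*\hmu^c_{\hx} = \hmu^c_{\hsigma(\hx)} = \mu^c_{\sigma(x)},
\end{equation*}
where the first equality uses (i) and (iii), the second uses (ii), and the third uses (iii) again applied to $\hsigma(\hx)$. Since $x$ was arbitrary, this gives the stated invariance.

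There is no substantial obstacle here: the only thing one has to be careful about is that the equality $\mu^c_x=\hmu^c_{\hx}$ from Corollary~\ref{c.muc} holds for \emph{every} $\hx$ with $P(\hx)=x$, not just almost every one, which is why the conclusion can be asserted pointwise in $x$ rather than only $\mu^u$-almost everywhere. This is guaranteed by the continuity of both disintegrations and the proof of Corollary~\ref{c.muc}. The main conceptual point worth flagging — though it is not really an obstacle — is that the reduction in Section~\ref{s.martingale2}, which makes $\hf_{\hx}$ factor through $P$, is what allows us to project the dynamics to $f$ in the first place; without it, $(f_x)_*\mu^c_x$ would not even be defined unambiguously.
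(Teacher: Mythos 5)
Your argument is correct and follows essentially the same route as the paper: the identity $(\hf_{\hx})_*\hmu^c_{\hx}=\hmu^c_{\hsigma(\hx)}$ (which the paper re-derives inside the proof from $\hf$-invariance of $\hmu$ almost everywhere and then extends to every $\hx$ by continuity of $\hx\mapsto\hmu^c_{\hx}$, exactly the point you flag) is combined with Corollary~\ref{c.muc} and the fact that $\hf_{\hx}=f_x$ to conclude pointwise in $x$. Nothing is missing.
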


\begin{proof}
We have that $(\hf_{\hx})_* \hmu^c_{\hx}=\hmu^c_{\hsigma(\hx)}$ for $\hmu$-almost every $\hx$,
because $\hmu$ is $\hf$-invariant. Since $\hx\mapsto\hmu^c_{\hx}$ is continuous,
the identity extends to \emph{every} $\hx\in\hSigma$. By Corollary~\ref{c.muc} this implies that
$(f_{x})_* \mu^c_{x}=\mu^c_{\sigma(x)}$ for every $x\in\Sigma$.
\end{proof}

\subsection{Jacobians}\label{ss.jacobians2}

Denote $\hnu=(\pi_1)_{\ast}\hmu$ and $\nu=(\pi_1)_{\ast}\mu$ where $\pi_1$ denotes
both canonical projections $\hM\to\hSigma$ and $M\to\Sigma$. Recall the functions
$\hvro$ and $\varrho$ defined in \eqref{eq.hatvarrho} and \eqref{eq.varrho}.
Note that $\{\hvro(x) \hmu^c: \hx\in\hSigma\}$ is a disintegration of $\hmu$
with respect to the partition $\{\pi_1^{-1}(\hx):\hx\in\hSigma\}$ of $\hM$ and
$\{\varrho(x) \mu^c: x\in\Sigma\}$ is a disintegration of $\mu$
with respect to the partition $\{\pi_1^{-1}(x):x\in\Sigma\}$ of $\hM$.

Given a measurable map $g:N\to N$ and a measure $\eta$ on $N$ we call \emph{Jacobian}
of $g$ with respect to $\eta$ the essentially unique function $J_{\eta}g:N \to \real$
such that $\eta(g(C))=\int_C J_\eta g \, d\eta$ for every measurable set $C\subset N$
where $g$ is invertible. This is well defined whenever $N$ can be covered with countably
many domains of invertibility of $g$. See \cite[Section~9.7]{FET} for a detailed discussion.

\begin{remark}\label{r.jacobianos_muc}
Let $Jf^{j}_{x}:K\to \real$ be the Jacobian of $f^{j}_{x}$ with respect to $\mu^c$.
Using the observation that $\{\varrho(x) \mu^c: x\in\Sigma\}$ is a disintegration
of the $f$-invariant measure $\mu$, one easily gets that
$$
Jf^j_{x}(t)=\frac{\varrho(f^j(x,t))}{\varrho(x,t)}.
$$
In particular, these Jacobians are uniformly bounded from above and below.
Analogously, the fact that $\{\hmu^c_{\hx}:\hx\in\hSigma\}$ is invariant
under stable and unstable holonomies ensures that the Jacobians
$Jh^{\ast}_{\hx,\hy}$ of those holonomies  with respect to $\mu^c$ are
uniformly bounded from above and below.
\end{remark}

\begin{lemma}\label{l.jacobian1}
$J_{\mu}f^k(x,t)=J_{\nu}\sigma^k(x)$ for every $(x,t)\in M$ and $k\ge 1$.
\end{lemma}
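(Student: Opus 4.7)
The plan is to establish the equality for $k=1$ by computing $\mu(f(A))$ two different ways for sets $A$ contained in a single cylinder, then to bootstrap to $k \ge 1$ via the multiplicative cocycle identity of Jacobians and the semi-conjugacy $\pi_1 \circ f = \sigma \circ \pi_1$.

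For the base case, fix a cylinder $[i] \subset \Sigma$ on which $\sigma$ is injective, so that $f$ is injective on $[i]\times K$. Let $A \subset [i]\times K$ and write $A_x = \{t \in K : (x,t) \in A\}$ and $C=\pi_1(A)$. By the definition of the Jacobian, together with the disintegration $\{\mu^c_x\}$ of $\mu$ over $\nu$,
$$
\mu(f(A)) \;=\; \int_A J_\mu f \, d\mu \;=\; \int_C \int_{A_x} J_\mu f(x,t) \, d\mu^c_x(t) \, d\nu(x).
$$
On the other hand, disintegrating $\mu$ directly on the image set $f(A)=\{(\sigma(x),f_x(t)) : (x,t)\in A\}$, and applying the fibrewise invariance $(f_x)_{*}\mu^c_x = \mu^c_{\sigma(x)}$ provided by Corollary~\ref{c.inv.muc},
$$
\mu(f(A)) \;=\; \int_{\sigma(C)} \mu^c_{y}\bigl(f_{\sigma^{-1}(y)}(A_{\sigma^{-1}(y)})\bigr) \, d\nu(y) \;=\; \int_{\sigma(C)} \mu^c_{\sigma^{-1}(y)}\bigl(A_{\sigma^{-1}(y)}\bigr) \, d\nu(y),
$$
where the inverse branch $\sigma^{-1}$ is taken on the cylinder $[i]$. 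A change of variables $y=\sigma(x)$ using the Jacobian $J_\nu \sigma$ rewrites this integral as $\int_C \mu^c_x(A_x) \, J_\nu \sigma(x) \, d\nu(x)$. Equating the two expressions for $\mu(f(A))$ and letting $A$ vary yields $J_\mu f(x,t) = J_\nu \sigma(x)$ for $\mu$-almost every $(x,t)$; in particular the right-hand side is independent of $t$.

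For arbitrary $k \ge 1$, the multiplicative cocycle identity for Jacobians, combined with $\pi_1(f^j(x,t)) = \sigma^j(x)$, gives
$$
J_\mu f^k(x,t) \;=\; \prod_{j=0}^{k-1} J_\mu f\bigl(f^j(x,t)\bigr) \;=\; \prod_{j=0}^{k-1} J_\nu \sigma(\sigma^j(x)) \;=\; J_\nu \sigma^k(x),
$$
as claimed. There is no genuine obstacle here: the only subtlety is the need to restrict to a cylinder so as to make sense of the inverse branch of $\sigma$, and the fact that the $t$-dependence drops out is precisely the content of Corollary~\ref{c.inv.muc}.
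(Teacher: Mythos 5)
Your argument is correct and is essentially the paper's proof: both compute the $\mu$-measure of the image of a set inside a domain of injectivity in two ways, using the fibrewise invariance $(f_x)_*\mu^c_x=\mu^c_{\sigma(x)}$ from Corollary~\ref{c.inv.muc} together with a change of variables in the base governed by $J_\nu\sigma$. The only difference is organizational: the paper performs the computation directly for every $k\ge 1$ on rectangles $J\times C$ inside a $k$-cylinder, whereas you treat $k=1$ and then invoke the multiplicative cocycle identity for Jacobians and the semi-conjugacy $\pi_1\circ f=\sigma\circ\pi_1$, which yields the same (almost everywhere) identity.
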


\begin{proof}
Fix some $k$-cylinder $I=\left[ 0; x_0,\dots,x_{k-1}\right]$ and let $J\subset I$ and $C\subset K$. Noting that $\sigma^k \mid I$
is injective,
$$
\begin{aligned}
\mu\big(f^k(J\times C)\big)
& = \int_{y\in \sigma^k(J)} \mu^c_{y}(f^k_{z(y)}(C)) \, d\nu(y) \\
& = \int_{z\in J} J_\nu\sigma^k(z) \mu^c_{\sigma^k(z)}(f^k_{z}(C)) \, d\nu(z),
\end{aligned}
$$
where $z(y)$ is the unique point in $\sigma^{-k}(y) \cap I$ and we use the
change of variables $z=z(y)$. Using Lemma~\ref{c.inv.muc}, it follows that
$$
\mu\big(f^k(J\times C)\big)
= \int_J J_\nu\sigma^k(z) \mu^c_z(C) \, d\nu(z)
= \int_{J\times C} (J_{\nu}\sigma^k \circ \pi_1) \, d\mu,
$$
which concludes the proof.
\end{proof}

Now we find the Jacobian of $\sigma^k$:

\begin{lemma}\label{l.jacobian2}
$J_{\nu}\sigma^k(x)=1/\nu^s_{\sigma^k(x)}(I)$, where $I=[-k; x_0,\dots,x_{k-1}]$.
Consequently, the Jacobians $J_\mu f^k=J_\nu \sigma^k \circ \pi_1$ are continuous
and bounded from away zero and infinity on every $k$-cylinder.
\end{lemma}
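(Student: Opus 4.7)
The plan is to lift the computation to the natural extension $\hsigma$ on $\hSigma$, where $\hnu=(\pi_1)_*\hmu$ is invariant (since $\hmu$ is $\hf$-invariant and $\pi_1\circ\hf=\hsigma\circ\pi_1$). The key symbolic observation is that for any measurable $B\subset I^+=[0;x_0,\dots,x_{k-1}]$,
\begin{equation*}
\hsigma^k\big(P^{-1}(B)\big) \;=\; I\times\sigma^k(B), \qquad I=[-k;x_0,\dots,x_{k-1}]\subset\Sigma^-,
\end{equation*}
because $\hsigma^k$ shifts the prescribed coordinates $x_0,\dots,x_{k-1}$ into the negative positions $-k,\dots,-1$. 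Combining this with the $\hsigma$-invariance of $\hnu$ and $\nu=P_*\hnu$ yields $\nu(B)=\hnu(I\times\sigma^k(B))$.

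Disintegrating $\hnu$ along the fibres $W^s_{loc}(\hx)=\Sigma^-\times\{x\}$ of $P$ produces probability measures $\{\nu^s_x:x\in\Sigma\}$ on $\Sigma^-$ with $\hnu=\int \nu^s_x\times\delta_x\,d\nu(x)$, so
\begin{equation*}
\nu(B)\;=\;\int_{\sigma^k(B)} \nu^s_y(I)\,d\nu(y).
\end{equation*}
Since $\sigma^k$ is a bijection on $I^+$, the definition of the Jacobian lets me rewrite the right-hand side as $\int_B \nu^s_{\sigma^k(z)}(I)\,J_\nu\sigma^k(z)\,d\nu(z)$, and comparison with $\nu(B)=\int_B d\nu$ forces $J_\nu\sigma^k(x)\,\nu^s_x(I)=1$ for $\nu$-almost every $x\in I^+$, which is the desired identity.

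For the continuity and the bounds, I would exploit the partial product structure of $\hmu$. Integrating out the $K$-variable gives $\hnu=\tilde\rho\cdot(\mu^s\times\mu^u)$ with $\tilde\rho$ continuous and satisfying the ratio bound \eqref{eq.boundedness}, and the stable conditional becomes explicit:
\begin{equation*}
\nu^s_y(I)\;=\;\frac{\int_I\tilde\rho(y^s,y)\,d\mu^s(y^s)}{\int_{\Sigma^-}\tilde\rho(y^s,y)\,d\mu^s(y^s)}.
\end{equation*}
By \eqref{eq.boundedness}, numerator and denominator are sandwiched, uniformly in $y$, between positive multiples of the same reference value times $\mu^s(I)$ (respectively $\mu^s(\Sigma^-)=1$). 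Hence $\nu^s_y(I)$ is comparable to $\mu^s(I)$ uniformly in $y$, which is positive on any $k$-cylinder of positive measure, so $J_\nu\sigma^k$ is bounded away from $0$ and $\infty$ on $I^+$; continuity in $y$ follows from continuity of $\hrho$ and dominated convergence. The corresponding statement for $J_\mu f^k=J_\nu\sigma^k\circ\pi_1$ is then immediate from Lemma~\ref{l.jacobian1}.

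The main obstacle is really the bookkeeping: one has to check the symbolic identity $\hsigma^k(P^{-1}(B))=I\times\sigma^k(B)$ carefully and apply the disintegration in the correct variable after the change of variables. Once those are in place, the rest is a routine combination of $\hsigma$-invariance, the Rokhlin disintegration, and the ratio bound provided by the partial product structure.
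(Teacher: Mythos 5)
Your argument is correct and follows essentially the same route as the paper: both rest on the $\hsigma$-invariance of $\hnu$, the symbolic identity $\hsigma^k(\Sigma^-\times B)=I\times\sigma^k(B)$, and the disintegration $\{\nu^s_y\}$ along the $\Sigma^-$-fibres, the only differences being that you identify the Jacobian through the change-of-variables identity over arbitrary measurable $B\subset I^+$ (an almost-everywhere identification) where the paper computes it as a limit of ratios of shrinking cylinders, and that your explicit formula for $\nu^s_y(I)$ in terms of $\tilde\rho$ together with \eqref{eq.boundedness} spells out the boundedness that the paper attributes to the local product structure. One bookkeeping remark: your change of variables literally gives $J_\nu\sigma^k(z)\,\nu^s_{\sigma^k(z)}(I)=1$, and the paper's limit is likewise evaluated at $\sigma^k(x)$ since $\sigma^k(J_n)$ shrinks to that point, so the final relabelling to $\nu^s_x(I)$ is a convention shared with (and inherited from) the paper's statement rather than a consequence of your computation; it does not affect the continuity or the bounds.
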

\begin{proof}
Given $x\in\Sigma$ and $n\ge 1$, let $J_n=[x_0,\dots,x_{n}]$ be the $n$-cylinder
that contains $x$. Then,
$$
J_{\nu}\sigma^k(x)=\lim_{n \to \infty} \frac{\nu(\sigma^k(J_n))}{\nu(J_n)}
$$
Since $\hnu$ is invariant under $\hsigma$,
$$
\nu(J_n)
= \hnu(\Sigma^-\times J_n)
= \hnu(\hsigma^k(\Sigma^-\times J_n))
= \int_{y\in \sigma^k(J_n)} \nu^s_y(I)d\nu(y).
$$
It follows that
$$
\frac{1}{J_{\nu}\sigma^k(x)}
= \lim_{n\to\infty} \frac{\int_{y\in \sigma^k(J_n)} \nu^s_y(I)d\nu(y)}{\nu(\sigma^k(J_n))}
= \nu^s_{\sigma^k(x)}(I).
$$
This proves the first part of the conclusion. The second part is a consequence,
since the local product structure implies that $x \mapsto \nu^s_{\sigma^k(x)}(I)$ is continuous
for every cylinder $I$.
\end{proof}

\section{Convergence of conditional measures}\label{s.martingale2}

For each $1 \le l <d$, the linear cocycle $\hF:\hM\times \field^d\to \hM\times \field^d$
induces a projective cocycle $\hF:\hM\times \grassl\to \hM\times \grassl$ through
\begin{equation}
\hF(\hq,v)=(\hf(\hq),\hA(\hq)v).
\end{equation}
Let $\mu=\left(P\times \id_K\right)_{\ast}\hmu$ and, for any Borel probability measure $\hm$
on $\hM\times \grassl$,
\begin{equation}\label{eq.mhm}
m=\left(P\times \id_K\times \id_{\grassl}\right)_{\ast}\hm.
\end{equation}
We will be especially interested in the case when $\hm$ is a $\hF$-invariant probability measure
that projects down to $\hmu$ under the canonical projection $\pi:\hM\times \grassl \to \hM$ on
the first coordinate.

\subsection{Reduction to the one-sided case}\label{ss.reduction}

Our first step is to show that, up to conjugating the cocycle in a suitable way, we may suppose that:
\begin{enumerate}
\item[(A)] the base dynamics $\hf_{\hx}$ along the center direction depends only on $x$;
\item[(B)] the matrix $\hA(\hx,t)$ depends only on $(x,t)$.
\end{enumerate}
Next, let us explain how such a conjugacy may be defined using the stable linear holonomies.

Let $x^s\in\Sigma^-$ be fixed. For any $\hy\in\hSigma$, let $\phi(\hy) = (x^s,y)$ and then define
\begin{equation*}
h(\hy,t)=\big(\hy,h^{s}_{\varphi(\hy),\hy}(t)\big).
\end{equation*}
Then $\tilde{f}=h^{-1}\circ f\circ h$ is given by
\begin{equation*}
\tilde{f}(\hy,t)=\big(\hsigma(\hy), \tilde{f}_{\hy}(t)\big),
\quad\text{with } \tilde{f}_{\hy}(t) = h^{s}_{\hsigma(\phi(\hy)), \phi(\hsigma(\hy))}f_{\phi(\hy)}(t).
\end{equation*}
Notice that $\tilde{f}_{\hy}$ does depend only on $y$ (because $\phi$ does).

Assume that (A) is satisfied. Define $\hphi(\hy,t)=(\phi(\hy),t)$ and then let
\begin{equation*}
H(\hy,t) = H^s_{\hphi(\hy,t),(\hy,t)}
\end{equation*}
Define $\tilde{A}(\hy,t) = H(\hf(\hy,t))^{-1} \circ A(\hy,t) \circ H(\hy,t)$. Then
\begin{equation*}
\tilde{A}(\hy,t)
= H^s_{\hf(\hphi(\hy,t)),\hphi(\hf(\hy,t))} \circ \hA(\hphi(\hy,t)),
\end{equation*}
which only depends on $(y,t)$. Clearly, this procedure does not affect the
Lyapunov exponents.

Observe that this new cocycle is continuous but not necessarily H\"older continuous,
since the definition involves the holonomies which are not assumed to be  H\"older
continuous. However, $\tilde{A}$ does clearly have well defined holonomies,
which is what we actually need for the following.

From now on, we assume that both (A) and (B) are satisfied. Then, there exist
\begin{equation*}
f:M\to M, \  f(x,t)=(\sigma(x),f_x(t))
\quand A: M \to \GL
\end{equation*}
such that
\begin{equation*}
(P \times \id_K) \circ \hf = f \circ (P \times \id_K)
\quand \hA = A \circ (P \times \id_K).
\end{equation*}
Consequently, the map
\begin{equation*}
F:M\times \grassl\to M\times \grassl, \quad F(p,V)=(f(p),A(p)V)
\end{equation*}
satisfies
\begin{equation*}
\left(P\times \id_K\times \id_{\grassl}\right)\circ \hF=F \circ \left(P\times \id_K \times \id_{\grassl}\right).
\end{equation*}

The following well known basic fact will be used to characterize the $F$-invariant probability measures:

\begin{proposition}\label{p.invariante}
Let $(N,\fB,\eta)$ be a Lebesgue probability space and $g:N\to N$ be a measurable map that
preserves $\eta$. Let $\{\eta_{y}: y\in N\}$ be the disintegration of $\eta$ with respect
to the partition into pre-images $\cP=\{g^{-1}(y): y \in N\}$. Let
\begin{equation*}
G:N\times L \to N\times L,
\quad G(x,v)=(g(x),G_x(v))
\end{equation*}
be a measurable skew-product over $g$ and, given any probability measure $m$ on $N\times L$
that projects down to $\eta$, let $\{m_x: x\in N\}$ be its disintegration with respect to the
partition into vertical fibers $\{x\} \times L$, $x\in N$. Then $m$ is invariant under $G$ if and only if
\begin{equation*}
m_{x}=\int (G_{z})_{\ast} m_{z} \, d\eta_{x}(z)
\quad\text{for $\eta$-almost every $x\in N$.}
\end{equation*}
\end{proposition}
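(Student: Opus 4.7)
The strategy is to compute the disintegration of the push-forward measure $G_*m$ along vertical fibers and compare with that of $m$ via essential uniqueness. Since $m$ projects to $\eta$ and $g$ preserves $\eta$, the push-forward $G_*m$ also projects to $\eta$, so it admits a disintegration $\{(G_*m)_y : y \in N\}$ along the vertical partition. Then $m$ is $G$-invariant if and only if $(G_*m)_y = m_y$ for $\eta$-almost every $y$, and the whole task is to identify $(G_*m)_y$ as $\int (G_z)_* m_z \, d\eta_y(z)$.

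First I would use the hypothesis that $g_*\eta = \eta$ together with the definition of the disintegration $\{\eta_y\}$ along the partition into preimages to write, for any bounded measurable $h:N\to\real$,
\begin{equation*}
\int h(x)\, d\eta(x) = \int \int_{g^{-1}(y)} h(z)\, d\eta_y(z)\, d\eta(y).
\end{equation*}
Applying this to a bounded measurable test function $\phi$ on $N\times L$ and using the disintegration of $m$, I get
\begin{equation*}
\int \phi\, d(G_*m) = \int \int \phi(g(x),G_x(v))\, dm_x(v)\, d\eta(x)
 = \int \int_{g^{-1}(y)} \int_L \phi(y, G_z(v))\, dm_z(v)\, d\eta_y(z)\, d\eta(y),
\end{equation*}
where I used $g(z)=y$ for $z\in g^{-1}(y)$. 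Recognizing the inner double integral as integration of $w\mapsto \phi(y,w)$ against the measure $\nu_y := \int (G_z)_* m_z\, d\eta_y(z)$, I obtain
\begin{equation*}
\int \phi\, d(G_*m) = \int \int_L \phi(y,w)\, d\nu_y(w)\, d\eta(y).
\end{equation*}

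By essential uniqueness of Rokhlin disintegrations, the family $\{\nu_y\}$ coincides with $\{(G_*m)_y\}$ for $\eta$-almost every $y$. Therefore $G_*m=m$ if and only if $m_y=\nu_y$ for $\eta$-almost every $y$, which is precisely the asserted identity. The forward direction (invariance implies the identity) and the converse (identity implies invariance) are the two halves of this equivalence. The only delicate point is the standard but unavoidable measurable-selection issue in justifying the Fubini-type exchange of integrals against the disintegration $\{\eta_y\}$ on preimage sets, which is legitimate because $(N,\fB,\eta)$ is a Lebesgue space and $G$ and its fiber components $G_x$ are measurable; the computation then goes through with no further obstacle.
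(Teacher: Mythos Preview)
Your proof is correct and follows essentially the same approach as the paper: both define $\nu_y=\int (G_z)_*m_z\,d\eta_y(z)$, verify that $\{\nu_y\}$ is a disintegration of $G_*m$ along vertical fibers (you via test functions, the paper via measurable sets), and conclude by essential uniqueness of the Rokhlin disintegration. The paper spells out a bit more carefully why the outer integration in the disintegration formula is against $\eta$ itself---namely, the bijection $y\mapsto g^{-1}(y)$ between $N$ and the partition $\cP$ identifies the quotient measure with $g_*\eta=\eta$---but your use of this identity is correct.
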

As an immediate consequence, we get:

\begin{corollary}\label{c.invariante}
In the conditions of Proposition~\ref{p.invariante}, if $g$ is invertible then $m$ is invariant under
$G$ if and only if $m_x = (G_{g^{-1}(x)})_* m_{g^{-1}(x)}$ for $\eta$-almost every $x\in N$.
\end{corollary}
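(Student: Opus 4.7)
The plan is to deduce this immediately from Proposition~\ref{p.invariante} by observing that, when $g$ is invertible, the partition $\cP=\{g^{-1}(y):y\in N\}$ is a partition into singletons, so that its associated disintegration degenerates to Dirac masses.

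More concretely, I would first note that for invertible $g$ each atom $g^{-1}(y)$ is the single point $\{g^{-1}(y)\}$. Since by Rokhlin's theorem $\eta_{y}$ is supported on $g^{-1}(y)$ and is a probability measure, we must have $\eta_{y}=\delta_{g^{-1}(y)}$ for $\eta$-almost every $y\in N$. (This is the only place invertibility is used.)

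Substituting this into the criterion of Proposition~\ref{p.invariante}, the integral
$$
m_{x}=\int (G_{z})_{\ast} m_{z} \, d\eta_{x}(z)
$$
collapses to $m_{x}=(G_{g^{-1}(x)})_{\ast}m_{g^{-1}(x)}$, which holds $\eta$-almost everywhere if and only if $m$ is $G$-invariant. This yields the stated equivalence. There is no real obstacle here; the only thing to double-check is the measurability of $y\mapsto g^{-1}(y)$ (so that the family $\{\delta_{g^{-1}(y)}\}$ is a legitimate measurable disintegration), which is immediate because $g$ is a bimeasurable bijection on a Lebesgue space.
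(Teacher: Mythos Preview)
Your argument is correct and matches the paper's own proof, which simply observes that each $\eta_y$ must be the Dirac mass at $g^{-1}(y)$ and then reads off the conclusion from Proposition~\ref{p.invariante}. The extra remarks you make about Rokhlin's theorem and bimeasurability are harmless elaborations of the same one-line idea.
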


\begin{proof}
Each $\eta_y$ must coincide with the Dirac mass at $g^{-1}(y)$.
\end{proof}

\subsection{Lifting of measures}\label{ss.lifting}

The next proposition shows that every $\hF$-invariant measure $\hm$ that projects down to $\hm$
may be recovered from the corresponding $F$-invariant measure $m$, defined by \eqref{eq.mhm}.
Recall that we write $q_n=(P\times \id_K)(\hf^{-n}(\hq))$ for each $\hq\in \hM$ and $n\ge 0$.

The following proposition is borrowed from \cite[Section~3]{BoV04}:

\begin{proposition}\label{p.limmesa}
Take $\hm$ to be $\hF$-invariant. Then, for $\hmu$-almost every $\hq \in \hM$, the sequence
$(A^{n}(q_n)_{\ast} m_{q_n})_n$ converges to $\hm_{\hq}$ in the weak$^*$ topology.

Moreover, for any $k\geq 1$ and any choice of points $y_{n,k}$ such that $f^k(y_{n,k})=q_n$ and $\{y_{n,k}:n\ge 0\}$
is contained in some set of the form $I_k\times K$, where $I_k$ is a $k$-cylinder in $\Sigma$,
\begin{equation}\label{eq.limitnk}
\lim_{n\to \infty} A^{n}(q_n)_{\ast} m_{q_n}=
\lim_{n\to \infty} A^{n+k}(y_{n,k})_{\ast} m_{y_{n,k}}.
\end{equation}
\end{proposition}

Adapting the proof to the present setting is straightforward, so we only make a couple of observations.
Firstly, the proof uses the fact that the Jacobians $J_\mu f^k(y_{n,k})$ are bounded away from zero.
In \cite{BoV04} that is automatic, by continuity and compactness of the ambient space,
whereas in the present setting it is ensured by the assumption that $y_{n,k}\in I_k\times K$ for every $n\geq 0$,
through Lemma~\ref{l.jacobian2}. Finally, note that the identity \eqref{eq.limitnk} corresponds to the
conclusion (b) in \cite[Proposition~3.1]{BoV04}.

\section{Properties of $u$-states}\label{s.ustates}

A probability measure $\hm$ on $\hM\times \grassl$ is called a \emph{$u$-state} of $\hF$ if
there exists a disintegration $\{\hm_{\hq}: \hq\in\hM\}$ along the partition
$\big\{\{\hq\} \times \grassl: \hq\in\hM\big\}$ which is invariant under unstable linear holonomy:
there exists $\tilde{M}\subset \hM$ with $\hmu(\tilde{M})=1$ such that
\begin{equation}\label{eq.ustate}
\hm_{\hq}={H^u_{\hp,\hq}}_{\ast}\hm_{\hp}
\text{ for every $\hp, \hq \in \tilde{M}$ with $\hq \in W^{uu}_{\loc}(\hp)$.}
\end{equation}
Let $\pi:\hM\times \grassl \to \hM$
be the canonical projection.

\begin{proposition}
There is some $\hF$-invariant $u$-state $\hm$ that projects down to $\hmu$ under $\pi$.
\end{proposition}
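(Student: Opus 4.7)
The plan is to carry out a Krylov--Bogolyubov construction adapted to $u$-states: start from a non-invariant $u$-state projecting to $\hmu$, average its forward iterates, and extract a weak-$*$ limit.

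First, I would exhibit an initial $u$-state $\hm_0$ projecting to $\hmu$. Fix any probability $\nu_0$ on $\grassl$ and a measurable cross-section $\cS\subset\hM$ to the local unstable foliation, so that every local unstable leaf meets $\cS$ in exactly one point; let $\hs:\hM\to\cS$ assign to $\hq$ that unique intersection point. Define
$$
\hm_{0,\hq} = (H^u_{\hs(\hq),\hq})_*\nu_0
\quad\text{and}\quad
\hm_0 = \int \hm_{0,\hq}\,d\hmu(\hq).
$$
The H\"older continuity of $H^u$ in property (c) makes $\hq\mapsto\hm_{0,\hq}$ measurable, and by construction $\hm_0$ projects to $\hmu$ and has $u$-invariant disintegration.

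Next, I would check that $\hF_*\hm$ is a $u$-state whenever $\hm$ is: by Corollary~\ref{c.invariante} the disintegration reads $(\hF_*\hm)_{\hq}=\hA(\hf^{-1}(\hq))_*\hm_{\hf^{-1}(\hq)}$. Since $\hf^{-1}$ maps local unstable leaves into local unstable leaves, for $\hq\in W^{uu}_{loc}(\hb)$ we have $\hf^{-1}(\hq)\in W^{uu}_{loc}(\hf^{-1}(\hb))$, and the equivariance identity
$$
\hA(\hf^{-1}(\hq))\circ H^u_{\hf^{-1}(\hb),\hf^{-1}(\hq)} = H^u_{\hb,\hq}\circ \hA(\hf^{-1}(\hb))
$$
(the $u$-analog of property (a) of holonomies) together with the $u$-invariance of $\hm_{\hf^{-1}(\hb)}$ yields $(\hF_*\hm)_{\hq}=(H^u_{\hb,\hq})_*(\hF_*\hm)_{\hb}$. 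Since convex combinations of $u$-states are $u$-states, each Cesaro average $\hm_n=\tfrac{1}{n}\sum_{k=0}^{n-1}\hF^k_*\hm_0$ is a $u$-state projecting to $\hmu$; because $\hmu$ is tight and $\grassl$ is compact, the set of such measures is weak-$*$ compact, and any weak-$*$ limit $\hm=\lim_j\hm_{n_j}$ is $\hF$-invariant by the standard Krylov--Bogolyubov argument.

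The step I expect to be the main obstacle is showing that $\hm$ retains the $u$-state property. In the hyperbolic setting this follows from continuous dependence of disintegrations, but the paper emphasizes that such continuity may fail here. My plan is to encode each $\hm_n$ by its disintegration on the cross-section $\cS$, namely by a measurable family $\{\eta_n(\hs):\hs\in\cS\}$ of probabilities on $\grassl$: each $\hm_n$ is completely determined by $\eta_n$ through $\hm_{n,\hq}=(H^u_{\hs(\hq),\hq})_*\eta_n(\hs(\hq))$, and this correspondence from families on $\cS$ to $u$-states on $\hM\times\grassl$ is weak-$*$ continuous by the H\"older regularity of $H^u$. Passing to a weak-$*$ limit on $\cS$ produces a family $\eta$ whose associated $u$-state coincides with $\hm$, completing the proof.
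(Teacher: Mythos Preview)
Your proposal is correct and follows essentially the same route as the paper's outline: parametrize $u$-states by data on a transversal to the unstable foliation (the paper fixes $\hx\in\hSigma$ and uses $W^s_{loc}(\hx)\times K$, which is exactly your cross-section $\cS$), observe that this correspondence is a homeomorphism so the set of $u$-states is weak-$*$ compact and $\hF_*$-invariant, and then apply Krylov--Bogolyubov. Your worry about the $L^1$-continuity issues is misplaced here---those difficulties concern the finer disintegration $\{m_{x,t}\}$ of the \emph{projected} measure on $M\times\grassl$, not the cross-section parametrization of $u$-states on $\hM\times\grassl$, which goes through exactly as in the hyperbolic case via the H\"older regularity of $H^u$.
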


This is analogous to \cite[Proposition~4.2]{AvV1}. In very brief terms, the idea is to fix some $\hx\in \hSigma$
and to construct a homeomorphism between the space of measures in $\lbrace \hx\rbrace\times W^{s}_{\loc}(\hx) \times K$
that project down to $\mu^s$ and the space of all $u$-states. Using that the former is weak$^*$ compact, we get that
the space of $u$-states measures is also compact. Moreover, it is $\hF_*$-invariant.
That ensures that any accumulation point of ${n}^{-1}\sum_{j=0}^{n-1}\hF^j_*\hm$ is also a $u$-state.

In the remainder of this section, $\hm$ denotes any $\hF$-invariant $u$-state that projects down to $\hmu$ under $\pi$,
and $\{\hm_{\hq}: \hq\in\hM\}$ is taken to be a disintegration as in \eqref{eq.ustate}.

\subsection{Bounded distortion}

Let $\pi_1:\hM\to \hSigma$ be the canonical projection $\pi_1(\hx,t)=\hx$ and denote $\hmuSigma={\pi_1}_{\ast}\hmu$.
Equivalently,
$$
\hnu(E) = \int_{E \times K} \hrho(x^s,x,t) \, d\mu^s(x^s) \, d\mu^u(x) \, d\mu^c(t)
$$
for any measurable set $E\subset\hSigma$. For each $x\in\Sigma$, define $\hmuSigma_x$
to be the normalization of
$$
\mu^s \int\hrho(\cdot,x,t)\, d\mu^c(t).
$$
Then $\{\hmuSigma_x: x\in\Sigma\}$ is a continuous disintegration of $\hmuSigma$
with respect to the partition into local stable sets $W^s_{\loc}(\hx)$.

The measure $\hmuSigma$ satisfies the properties of local product structure, boundedness and continuity in \cite[Section~1.2]{AvV1}.
In what follows, we recall a few results about this type of measures that we will use later.
For each $x^u \in \Sigma^+$ and $k\geq 1$ let the \emph{backward average} measure $\mu^u_{k,x^u}$ of the map $\sigma$ be defined by
\begin{equation*}
 \mu^u_{k,x^u}=\sum_{\sigma^k(z)=x^u}\frac{1}{J\sigma^k(z)}\delta_z
\end{equation*}
where $J\sigma^k:\Sigma^+\to \real$ is the Jacobian of $\mu^u$ with respect to $\sigma^k$.

\begin{lemma}[Lemma~2.6 in \cite{AvV1}]\label{l.todos1}
For any cylinder  $I^u=[0;\iota_0,\dots,\iota_{k-1}]\subset \Sigma^+$ and $z^u\in I^u$,
\begin{equation*}
\hsigma^k_{\ast}\hmuSigma_{z^u}=J\sigma^k (z^u) ( \hmuSigma_{\sigma^k(z^u)} \mid I^s)
\end{equation*}
where $\{\hmuSigma_{z^u}: z^u\in \Sigma^+\}$ is the disintegration of $\hmuSigma$ with respect to the partition
$\{\Sigma^{-}\times \lbrace z^u\rbrace : z^u \in \Sigma^+\}$.
\end{lemma}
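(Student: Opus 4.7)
The plan is to derive the identity from the $\hsigma$-invariance of $\hnu=(\pi_1)_*\hmu$, which is inherited from the $\hf$-invariance of $\hmu$ via $\pi_1\circ\hf=\hsigma\circ\pi_1$. The geometric key is that, for any $z^u\in I^u$, the iterate $\hsigma^k$ maps the fiber $\Sigma^-\times\{z^u\}$ bijectively onto $I^s\times\{\sigma^k(z^u)\}$, where $I^s=[-k;\iota_0,\ldots,\iota_{k-1}]$: in the $\Sigma^-$ coordinate this action is just the concatenation map $\tau\colon x^s\mapsto x^s\cdot\iota_0\iota_1\cdots\iota_{k-1}$, which is a measurable bijection from $\Sigma^-$ onto $I^s$.

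Fix a measurable set $A\subset\Sigma^-$ and a subcylinder $J^u\subset I^u$. Then $\hsigma^k(A\times J^u)=\tau(A)\times\sigma^k(J^u)$, so $\hsigma$-invariance gives
\[
\hnu(A\times J^u)=\hnu(\tau(A)\times\sigma^k(J^u)).
\]
Disintegrating both sides along the partition $\{\Sigma^-\times\{y^u\}:y^u\in\Sigma^+\}$ produces
\[
\int_{J^u}\hnu_{y^u}(A)\,d\nu(y^u)=\int_{\sigma^k(J^u)}\hnu_{x^u}(\tau(A))\,d\nu(x^u),
\]
and the change of variables $x^u=\sigma^k(y^u)$ along the injective branch $J^u\to\sigma^k(J^u)$ introduces the Jacobian of Lemma~\ref{l.jacobian2}, converting the right-hand side into $\int_{J^u}J\sigma^k(y^u)\,\hnu_{\sigma^k(y^u)}(\tau(A))\,d\nu(y^u)$. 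Since $J^u\subset I^u$ is arbitrary, one concludes
\[
\hnu_{y^u}(A)=J\sigma^k(y^u)\,\hnu_{\sigma^k(y^u)}(\tau(A)) \quad\text{for $\nu$-a.e.\ $y^u\in I^u$.}
\]
The left-hand side equals $(\hsigma^k_*\hnu_{y^u})(\tau(A))$ by definition of push-forward, and as $A$ varies over measurable subsets of $\Sigma^-$, $\tau(A)$ varies over arbitrary measurable subsets of $I^s$, so this is exactly the desired equality of measures on $I^s\times\{\sigma^k(y^u)\}$.

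To upgrade the ``a.e.'' qualifier to ``every'' $z^u\in I^u$, I would restrict $A$ to the $\pi$-system of cylinders in $\Sigma^-$: for such clopen sets the function $y^u\mapsto\hnu_{y^u}(A)$ is continuous (via the explicit formula $d\hnu_{y^u}=\tilde\rho(\cdot,y^u)/\tilde r(y^u)\,d\mu^s$ and the continuity of $\tilde\rho$ from the partial product structure), and $J\sigma^k$ is continuous on $I^u$ by Lemma~\ref{l.jacobian2}. Agreement on a $\nu$-full subset then propagates to every $z^u\in I^u$, and equality of the two measures follows from agreement on a generating $\pi$-system. I expect the only real obstacle to be bookkeeping — carefully matching $\hsigma^k$ restricted to a fiber with the concatenation $\tau$ and tracking the correct Jacobian through the change of variables — as all substantive ingredients are already in place from the preceding subsections.
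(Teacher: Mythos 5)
Your argument is correct and is essentially the proof the paper has in mind: the paper's own "proof" is just a citation of Avila--Viana's Lemma~2.6, whose argument is exactly this combination of the $\hsigma$-invariance of $\hnu$, the identification of $\hsigma^k(\Sigma^-\times\{z^u\})$ with $I^s\times\{\sigma^k(z^u)\}$, the Jacobian change of variables from Lemma~\ref{l.jacobian2}, and the continuity of the disintegration supplied by the partial product structure. The only point worth flagging is that upgrading the $\nu$-a.e.\ identity to every $z^u\in I^u$ uses that $\nu$ charges nonempty cylinders, a property the paper assumes implicitly and invokes elsewhere (e.g.\ in the proof of Corollary~\ref{c.invcont2}).
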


\begin{lemma}[Lemma~2.7 in \cite{AvV1}]
For every cylinder $\left[J\right]\subset \Sigma^+$ and $x^u\in \Sigma^+$,
$$
\begin{aligned}
\kappa \mu^u(\left[J\right])
& \geq \limsup_n \frac{1}{n}\sum_{k=0}^{n-1}\mu^u_{k,x^u}(\left[J\right])\\
& \geq \liminf_n \frac{1}{n}\sum_{k=0}^{n-1}\mu^u_{k,x^u}(\left[J\right])
\geq \frac{1}{\kappa}\mu^u(\left[J\right]),
\end{aligned}
$$
where $\kappa$ is the bound given in \eqref{eq.boundedness}.
\end{lemma}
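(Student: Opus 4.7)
The plan is to prove the pointwise comparison $\mu^u_{k,x^u}([J])\asymp\mu^u([J])$ for all $k$ at least the length $m$ of $[J]$, and then to observe that the finitely many terms with $k<m$ contribute only $O(m/n)$ to the Ces\`aro average, so they vanish in the limit. This reduces the lemma to a uniform estimate and avoids any appeal to ergodicity.

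For $k\ge m$, the preimages of $x^u$ in $[J]$ are parametrized by the $k-m$ free coordinates $(z_m,\dots,z_{k-1})$, with $z_i=j_i$ for $0\le i<m$ and $z_{k+i}=x^u_i$ for $i\ge 0$. The projection $\nu:=P_*\hnu$ is $\sigma$-invariant on $\Sigma^+$ and equals $\mu^u$ times the density $h(x^u):=\int\tilde\rho(x^s,x^u)\,d\mu^s(x^s)$; by \eqref{eq.boundedness} and the normalization $\int h\,d\mu^u=1$, this density satisfies $1/\kappa\le h\le\kappa$. The chain rule gives
$$
\frac{1}{J_{\mu^u}\sigma^k(z)}=\frac{h(\sigma^k z)}{h(z)}\cdot\frac{1}{J_{\nu}\sigma^k(z)},
$$
with the prefactor bounded above and below by powers of $\kappa$. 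By Lemma~\ref{l.jacobian2}, $1/J_{\nu}\sigma^k(z)=\nu^s_z(I_z)$ with $I_z:=[-k;z_0,\dots,z_{k-1}]$, and \eqref{eq.boundedness} again lets me replace $\nu^s_z$ by $\nu^s_{x^u}$ up to another bounded factor. Since the cylinders $I_z$ form a partition of $\tilde{J}_k:=[-k;j_0,\dots,j_{m-1}]$ as $z$ ranges over the preimages, the sum collapses:
$$
\mu^u_{k,x^u}([J])=\sum_{z}\frac{1}{J_{\mu^u}\sigma^k(z)}\asymp\nu^s_{x^u}(\tilde{J}_k).
$$

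To finish, I would exploit the $\hsigma$-invariance of $\hnu$ together with the identity $\hsigma^k(\Sigma^-\times[J])=\tilde{J}_k\times\Sigma^+$ (valid for $k\ge m$) to obtain
$$
\int_{[J]}h\,d\mu^u=\hnu(\Sigma^-\times[J])=\hnu(\tilde{J}_k\times\Sigma^+)=\int_{\tilde{J}_k}h'\,d\mu^s,
$$
where $h'(x^s):=\int\tilde\rho(x^s,x^u)\,d\mu^u(x^u)$. Replacing $\tilde\rho(x^s,x^u)$ by $h'(x^s)$ inside $\nu^s_{x^u}(\tilde{J}_k)=h(x^u)^{-1}\int_{\tilde{J}_k}\tilde\rho(x^s,x^u)\,d\mu^s$ by a final appeal to \eqref{eq.boundedness} then yields $\nu^s_{x^u}(\tilde{J}_k)\asymp\mu^u([J])$, with a constant absorbable into the $\kappa$ of the statement. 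Combining this with the previous display gives the required pointwise bound, which Ces\`aro averaging preserves.

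The hard part, I expect, is the bookkeeping of distortion constants: several appeals to \eqref{eq.boundedness} compound, so the $\kappa$ appearing in the conclusion of the lemma must be read as a specific power of the $\kappa$ of \eqref{eq.boundedness}. A secondary delicate point is that Lemma~\ref{l.jacobian2} as stated indexes $\nu^s$ at the point $z$ rather than at its image $\sigma^k(z)=x^u$; reconciling this with the computation above is exactly what the estimate $\nu^s_z\asymp\nu^s_{x^u}$, coming once more from \eqref{eq.boundedness}, is designed to handle.
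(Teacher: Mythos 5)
Your proof is correct, and it takes a genuinely different route from the one the paper relies on: the paper's own proof is a one-line reference to Lemma~2.7 of \cite{AvV1}, whose adaptation to this setting runs through the conditional measures of the invariant measure $\hnu$ along local stable sets (the analogue being Lemma~\ref{l.todos1}) together with an averaging/ergodicity argument for $\hnu$, so that the Ces\`aro average is intrinsic to the argument and essentially one comparison between $\nu=P_*\hnu$ and $\mu^u$ is made, which is how the constant can be kept as $\kappa$ itself. You instead establish the stronger pointwise estimate $\mu^u_{k,x^u}([J])\asymp\mu^u([J])$ for \emph{every} $k\ge m$ ($m$ the length of $[J]$), using only the global bound \eqref{eq.boundedness}, the Jacobian formula of Lemma~\ref{l.jacobian2}, the collapse of the cylinders $I_z$ onto $\tilde{J}_k=[-k;j_0,\dots,j_{m-1}]$, and the invariance identity $\hnu(\Sigma^-\times[J])=\hnu(\hsigma^k(\Sigma^-\times[J]))$; the averaging then only absorbs the first $m$ terms, and no ergodicity or ergodic theorem is needed. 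What this buys is a stronger, per-$k$ statement by elementary bookkeeping; what it costs is exactly what you flag, namely that the constant becomes a definite power of $\kappa$ rather than $\kappa$, which is immaterial here since the lemma is only used through \eqref{eq.distortion} and the choice of $c<\mu([J])/\kappa$ in Proposition~\ref{p.supmeasure}, where any uniform constant works. Two small points to complete your write-up: to claim the terms with $k<m$ contribute $O(m/n)$ you should note they are uniformly bounded, which follows from your own collapse computation with $[J]$ replaced by $\Sigma^+$ (the cylinders $I_z$ over \emph{all} preimages of $x^u$ partition $\Sigma^-$, bounding the total mass $\mu^u_{k,x^u}(\Sigma^+)$ by a fixed power of $\kappa$); and your handling of the base point in Lemma~\ref{l.jacobian2} is indeed the right fix, since the exact identity holds with the conditional taken at $\sigma^k(z)=x^u$ rather than at $z$, the two versions differing by a density ratio bounded by $\kappa^2$, as you say.
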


As a direct consequence, for every cylinder $\left[J\right]\subset \Sigma^+$ and $x^u\in \Sigma^+$,
\begin{equation}\label{eq.distortion}
 \limsup_k \mu^u_{k,x^u}(\left[J\right])\geq \frac{1}{\kappa}\mu^u(\left[J\right]).
\end{equation}

\subsection{$L^1$-continuity of conditional probabilities}\label{s.L1continuity}

For every $x\in \Sigma$ let
\begin{equation*}
F_x:K\times \grassl\to K\times\grassl, \quad F_x(t,V)=(f_x(t),A(x,t)V)
\end{equation*}
and for every $\hx,\hy \in \hSigma$ in the same unstable set let
$$
H_{\hx,\hy}:K\times \grassl\to K\times \grassl
$$
be defined by
$$
H_{\hx,\hy}(t,V)=(h^u_{\hx,\hy}(t),H^u_{(\hx,t)(\hy,h^u_{\hx,\hy}(t))}V).
$$
Observe that $F_{\hy}\circ H_{\hx,\hy}=H_{\hsigma(\hx),\hsigma(\hy)}\circ F_{\hx}$.
Now define $\lbrace \hm_{\hx}:\hx\in \hSigma \rbrace$ as
$$
\hm_{\hx}=\int \hm_{\hx,t} \,d\hmu^c_{\hx}(t).
$$
Observe that for any $\varphi:K\times \grassl \to \real$,

$$
\begin{aligned}
\int \varphi d\big({H_{\hx,\hy}}_* \hm_{\hx}\big)
&=\int \varphi \big(h^u_{\hx,\hy}(t),H^u_{(\hx,t)(\hy,h^u_{\hx,\hy}(t))}V\big)d\,\hm_{\hx,t}(V)d\,\hmu^c_{\hx}(t)\\
&=\int \varphi \big(h^u_{\hx,\hy}(t),V\big)d\,\big({H^u_{(\hx,t)(\hy,h^u_{\hx,\hy}(t))}}_*\hm_{\hx,t}\big)(V)d\,\hmu^c_{\hx}(t)\\
&=\int \varphi \big(h^u_{\hx,\hy}(t),V\big)d\,\hm_{\hy,h^u_{\hx,\hy}(t)}(V)d\,\hmu^c_{\hx}(t)\\
&=\int \varphi \big(t,V\big)d\,\hm_{\hy,t}(V)d\,({h^u_{\hx,\hy}}_*\hmu^c_{\hx})(t)\\
&=\int \varphi \big(t,V\big)d\,\hm_{\hy,t}(V)d\,\hmu^c_{\hy}(t),
\end{aligned}
$$
because $\hm$ is a $u$-state and $\lbrace \hmu^c_{\hx}:\hx\in\hSigma\rbrace$ is $h^u$-invariant.
It is also easy to see that $\lbrace \hm_{\hx}:\hx\in \hSigma \rbrace$ is a disintegration of
$\hm$ with respect to the partition $\lbrace \hx\times K \times \grassl:\hx\in\hSigma \rbrace$.

The main point with the next corollary is that the conclusion is for \emph{every} $x\in\Sigma$.

\begin{corollary}\label{c.invcont2}
For every $u$-state $\hm$, there exists a disintegration $\{\hm_{\hx} : \hx\in\hSigma\}$ of $\hm$
such that
$$
\hm_{\hsigma^n(\hx)} = {F_x^n}_* \hm_{\hx}
$$
for every $n \geq 1$, every $x\in\Sigma$, and $\hmuSigma_x$-almost every $\hx \in W^s_{\loc}(x)$.
\end{corollary}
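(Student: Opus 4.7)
The plan is to prove the case $n=1$ and then iterate. For the base case, I would start from $\hF$-invariance: by Corollary~\ref{c.invariante} applied to the skew-product $\hF$ over the invertible map $\hf$, one has the pointwise fibrewise identity $\hm_{\hsigma(\hx),f_x(s)}=\hA(\hx,s)_{\ast}\hm_{\hx,s}$ for $\hmu$-almost every $(\hx,s)$, where I have used the reduction $\hA(\hx,s)=A(x,s)$ and $\hf_\hx=f_x$. Integrating this identity against $\hmu^c_\hx$ and changing variables via $(f_x)_{\ast}\hmu^c_\hx=\hmu^c_{\hsigma(\hx)}$ (Corollary~\ref{c.inv.muc}), a direct computation---structurally identical to the one displayed just before the statement for $u$-holonomies, but with $F_x$ in place of $H_{\hx,\hy}$---yields $\hm_{\hsigma(\hx)}=(F_x)_{\ast}\hm_\hx$ for $\hnu$-almost every $\hx\in\hSigma$.

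The subtle point is promoting ``$\hnu$-almost every $\hx$'' to ``for every $x\in\Sigma$ and $\hnu_x$-almost every $\hx\in W^s_{loc}(x)$''. Let $B=\{\hx\in\hSigma:\hm_{\hsigma(\hx)}\neq (F_x)_{\ast}\hm_\hx\}$, so $\hnu(B)=0$. I claim $B$ is saturated by local unstable leaves. Indeed, for $\hy\in W^u_{loc}(\hx)$ the $u$-invariance of $\{\hm_\hx\}$ established just above the corollary gives $(H_{\hx,\hy})_{\ast}\hm_\hx=\hm_\hy$ and $(H_{\hsigma(\hx),\hsigma(\hy)})_{\ast}\hm_{\hsigma(\hx)}=\hm_{\hsigma(\hy)}$, while the intertwining relation $F_y\circ H_{\hx,\hy}=H_{\hsigma(\hx),\hsigma(\hy)}\circ F_x$ (with $F_\hy=F_y$, by the reduction to the one-sided case) gives
\begin{equation*}
(F_y)_{\ast}\hm_\hy=(H_{\hsigma(\hx),\hsigma(\hy)})_{\ast}(F_x)_{\ast}\hm_\hx.
\end{equation*}
Since $H_{\hsigma(\hx),\hsigma(\hy)}$ is invertible, combining these identities shows $\hy\in B\iff\hx\in B$. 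Because $W^u_{loc}(\hx)$ fixes the coordinates of $\hx$ at indices $k\le 0$ and freely varies those at $k\ge 1$, the indicator of $B$ depends only on coordinates at $k\le 0$. By the local product structure of $\hnu$ (comparable to $\mu^s\otimes\mu^u$ with bounded density) and Fubini, $\hnu(B)=0$ forces $\mu^s(B^{x_0})=0$ for every symbol $x_0$ charged by $\mu^u$; since $\hnu_x$ is equivalent to $\mu^s$ on $W^s_{loc}(x)\cong\Sigma^-$ with bounded density, this yields $\hnu_x(B)=0$ for every such $x$, which is the base case.

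For general $n$ I would induct: applying the base case at $\sigma^k(x)$ gives a $\hnu_{\sigma^k(x)}$-null set $B_k\subset W^s_{loc}(\sigma^k(x))$ on which the single-step identity fails. By Lemma~\ref{l.todos1}, $\hsigma^k_{\ast}\hnu_x$ is a bounded-density multiple of the restriction of $\hnu_{\sigma^k(x)}$ to the cylinder $\hsigma^k(W^s_{loc}(x))$, so $\hsigma^{-k}(B_k)\cap W^s_{loc}(x)$ is $\hnu_x$-null; the union over $0\le k\le n-1$ is still null, and off of it one telescopes $\hm_{\hsigma^n(\hx)}=(F_{\sigma^{n-1}(x)})_{\ast}\cdots (F_x)_{\ast}\hm_\hx=(F_x^n)_{\ast}\hm_\hx$. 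The main technical obstacle is in the second paragraph: carefully exploiting the $u$-saturation of $B$ to upgrade an $\hnu$-null statement to an $\hnu_x$-null statement for every $x$. Everything else rests on $\hF$-invariance, the already-proved $u$-invariance of $\{\hm_\hx\}$, and elementary bounded-distortion estimates.
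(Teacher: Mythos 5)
Your reduction to the case $n=1$ and the Fubini mechanism are fine in spirit, but the central claim of your second paragraph --- that the bad set $B$ is saturated by local unstable leaves --- is not established, and this is exactly the crux of the statement. If $\hy\in W^u_{loc}(\hx)$, i.e.\ $y_k=x_k$ for all $k\le 0$, then $\hsigma(\hy)$ and $\hsigma(\hx)$ agree only on coordinates $k\le -1$: local unstable sets expand under $\hsigma$, so in general $\hsigma(\hy)\notin W^u_{loc}(\hsigma(\hx))$. Hence the identity $(H_{\hsigma(\hx),\hsigma(\hy)})_{\ast}\hm_{\hsigma(\hx)}=\hm_{\hsigma(\hy)}$ that you invoke is \emph{not} the $u$-invariance computed just before the corollary (which concerns pairs on the same local unstable set); it is invariance under the \emph{global} holonomy $H_{\hsigma(\hx),\hsigma(\hy)}=F_y\circ H_{\hx,\hy}\circ F_x^{-1}$. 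Worse, given the two ingredients you do have (local invariance at $(\hx,\hy)$ and the intertwining relation) together with $\hx\notin B$, that global-invariance identity is \emph{equivalent} to $(F_y)_{\ast}\hm_{\hy}=\hm_{\hsigma(\hy)}$, i.e.\ to the very conclusion $\hy\notin B$ you are trying to derive. So the saturation argument is circular. An almost-everywhere version of saturation would not rescue it either: it would only reproduce the a.e.\ statement, whereas the whole point of the corollary is the conclusion for \emph{every} $x\in\Sigma$ (and note also that your Fubini step only reaches those $x$ whose initial cylinder is charged by $\mu^u$).

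The paper's route avoids this trap by transferring in the other direction: since the a.e.\ identity holds on $W^s_{loc}(z)$ for a dense set of good $z$, one picks a good $z$ agreeing with $x$ on a long initial block (depending on $n$) and maps $W^s_{loc}(z)$ onto $W^s_{loc}(x)$ by the unstable holonomy $\hz=(z^s,z)\mapsto\hx=(z^s,x)$. Because $z$ and $x$ share a block of length $>n$, the pairs $(\hz,\hx)$ \emph{and} their images $(\hsigma^k(\hz),\hsigma^k(\hx))$, $k\le n$, all remain on the same local unstable sets, so only the local $u$-invariance of $\{\hm_{\hx}\}$ and the relation $F^n_x\circ H_{\hz,\hx}=H_{\hsigma^n(\hz),\hsigma^n(\hx)}\circ F^n_z$ are needed; the conclusion for $\hnu_x$-almost every $\hx\in W^s_{loc}(x)$ then follows because $\hnu_x$ and $\hnu_z$ are both equivalent to $\mu^s$. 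If you want to keep your set-up, you would have to replace your saturation claim by this kind of transfer between stable leaves with a long common future block; as written, the key step fails.
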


\begin{proof}
Up to modifying a $0$ measure subset we can assume that de disintegration $\{\hm_{\hx} : \hx\in\hSigma\}$ has the property that ${ H_{\hx,\hy} }_\ast \hm_{\hx}=\hm_{\hy}$ for every $\hx\in W^u_{loc}(\hy)$.
Since $\hm$ is $\hF$-invariant, the equality is true for all $n\geq
1$ and $\hmuSigma$-almost all $\hz\in\hSigma$ or, equivalently, for
$\hmuSigma_z$-almost every $\hz\in W^s_{\loc}(z)$ and $\muSigma$-almost every
$z\in\Sigma$. Consider an arbitrary point $x\in\Sigma$. Since $\muSigma$
is positive on open sets, $x$ may be approximated by points $z$ such
that
$$
\hm_{\hsigma^n(\hz)} = {F^n_z}_* \hm_{\hz}
$$
for every $n\geq 1$ and $\hmu_z$-almost every $\hz\in W^s_{\loc}(z)$. Since the conditional
probabilities of $\hm$ are invariant under unstable linear holonomies, it follows that
$$
\hm_{\hsigma^n(\hx)}
 = (H_{\hsigma^n(z),\hsigma^n(x)})_* {F_{z}^n}_* \hm_{\hz}
 = {F^n_x}_* (H_{\hz,\hx})_* \hm_{\hz}
 = {F^n_x}_* \hm_{\hx}$$
for $\hmu_z$-almost every $\hz\in W^s_{\loc}(z)$, where $\hx$ is the unique point in
$W^s_{\loc}(x)\cap W^u_{\loc}(\hz)$. Since the measures $\hmu_x$ and $\hmu_z$ are equivalent,
this is the same as saying that the last equality holds for $\hmu_x$-almost every
$\hx\in W^s_{\loc}(x)$, as claimed.
\end{proof}


Recall that
\begin{equation*}
\mu=\left(P\times \id_K\right)_{\ast}\hmu \quand m=\left(P\times \id_K\times \id_{\grassl}\right)_{\ast}\hm.
\end{equation*}
Let $\{m_{x} : x \in \Sigma\}$ and $\{m_{x,t}:(x,t)\in M\}$ be disintegrations of $m$ with respect to
the partitions $\lbrace \lbrace x\rbrace \times K\times \grassl,x\in \Sigma \rbrace$ and
$\lbrace \lbrace (x, t)\rbrace \times \grassl,(x,t)\in \Sigma\times K \rbrace$, respectively.
Thus each $m_{x}$ is a probability measure on $K\times \grassl$ and each $m_{x,t}$
is a probability measure on $\grassl$.

It is easy to check that $x \mapsto m_x$ may be chosen to be continuous with respect to the weak$^*$ topology
(see Corollary~\ref{continuidad}). The corresponding statement for $x \mapsto m_{x,t}$ is false, in general.
However, the main goal in this section is to show that the family $\{m_{x,t}: (x,t)\in M\}$ does have some
continuity property:

\begin{proposition}\label{p.conv}
Let $(x_n)_n$ be a sequence in $\Sigma$ converging to some $x\in\Sigma$.
Then there exists a sub-sequence $(x_{n_{k}})_k$ such that
$$
m_{x_{n_{k}},t} \to m_{x,t} \text{ as } k\to\infty
$$
in the weak$^*$ topology, for $\mu^c$-almost every $t \in K$.
\end{proposition}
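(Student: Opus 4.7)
The plan is to reduce the desired a.e.\ weak* convergence $m_{x_{n_k},t}\to m_{x,t}$ to scalar $L^1(\mu^c)$-convergence of the test integrals $t\mapsto \int \phi\, dm_{x_n,t}$, for $\phi$ in a countable dense family in $C(\grassl)$, and then diagonalize. Since $\grassl$ is a compact metric space, weak* convergence of probability measures on $\grassl$ is equivalent to convergence of such integrals for any countable dense family.

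The first step is to establish weak*-continuity of $x\mapsto m_x$ as a map from $\Sigma$ into probability measures on $K\times\grassl$. Combining the $u$-invariance of $\hm_{\hx}$ (which gives continuous dependence of $\hm_{\hx}$ along unstable sets) with the continuity of $\hx\mapsto \hmu^c_{\hx}$ and the partial product structure of $\hmu$, one represents $m_x$ as an integral over $x^s\in\Sigma^-$ of $\hm_{(x^s,x)}$ weighted by a continuous density, and deduces the claimed continuity in weak* topology.

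Now fix a continuous $\phi:\grassl\to\real$ and set $\Phi_n(t)=\int\phi\,dm_{x_n,t}$, $\Phi(t)=\int\phi\,dm_{x,t}$; both are bounded by $\|\phi\|_\infty$. Weak*-continuity of $m_x$ together with continuity of the density $\varrho(x,\cdot)$ yields, for each continuous $g:K\to\real$,
$$\int g\,\Phi_n\,d\mu^c\;\longrightarrow\;\int g\,\Phi\,d\mu^c,$$
and hence, by dominated convergence and density of $C(K)$ in $L^1(\mu^c)$, for every $g\in L^1(\mu^c)$. Thus $\Phi_n\to\Phi$ weakly in $L^1(\mu^c)$. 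The crucial upgrade is to pass from weak to strong $L^1$-convergence. For that I would use the partial-product expression of $m_{x,t}$ as a $\hrho$-weighted integral of $\hm_{(x^s,x),t}$ over $x^s$, together with $u$-invariance of $\hm$: for $n$ large, $(x^s,x_n)\in W^u_{loc}((x^s,x))$ for every $x^s$, and so
$$\hm_{(x^s,x_n),\,t}\;=\;H^u_{((x^s,x),t'),\,((x^s,x_n),t)}\,\hm_{(x^s,x),\,t'},\qquad t'=h^u_{(x^s,x_n),(x^s,x)}(t),$$
with $H^u\to\id$ and $t'\to t$ as $n\to\infty$. Averaging these holonomy identities over $x^s$ against the density $\hrho/\rho$, and exploiting the uniform Jacobian bounds from Remark~\ref{r.jacobianos_muc}, should produce $\|\Phi_n-\Phi\|_{L^1(\mu^c)}\to 0$. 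Extracting an a.e.\ convergent subsequence and diagonalizing over a countable dense $\{\phi_k\}\subset C(\grassl)$ yields a subsequence $(x_{n_k})$ along which $m_{x_{n_k},t}\to m_{x,t}$ weak*, for $\mu^c$-a.e.\ $t$.

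The main obstacle is this final upgrade from weak to strong $L^1$-convergence: in general, weak* convergence of uniformly bounded densities against continuous test functions does not imply strong $L^1$-convergence. Forcing strong convergence requires genuine use of the unstable-holonomy identity together with the partial product form of $\hmu$, features that are specific to the setting considered here and account for the novelty of this part of the argument.
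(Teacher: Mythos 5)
Your overall architecture matches the paper's: reduce the statement to $L^1(\mu^c)$-convergence of the fiberwise integrals $t\mapsto\int\phi\,dm_{x_n,t}$ for a countable dense family of $\phi\in C(\grassl)$, extract an a.e.\ convergent subsequence, and diagonalize; and you correctly identify the structural inputs (the $u$-state identity $\hm_{(x^s,x_n),t}=(H^u)_*\hm_{(x^s,x),t'}$ with $H^u\to\id$ and $t'=h^u(t)\to t$, the partial product structure allowing an average over $x^s$, and the bounded Jacobians of the holonomies with respect to $\mu^c$). However, the step you yourself flag as the main obstacle --- upgrading to strong $L^1$-convergence --- is exactly where the real work lies, and your proposal does not close it: you write that averaging the holonomy identities and ``exploiting the uniform Jacobian bounds \emph{should} produce'' $\|\Phi_n-\Phi\|_{L^1(\mu^c)}\to 0$, but the listed ingredients alone do not suffice. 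The difficulty is that $t\mapsto\hm_{(x^s,x),t}$ is only \emph{measurable}, so the uniform convergences $H^u_n\to\id$ and $h^u_n\to\id$ give no pointwise (or $L^1$) control on the difference between $\hm_{(x^s,x),h^u_n(t)}$ and $\hm_{(x^s,x),t}$; a bounded Jacobian by itself cannot repair this, since composing a merely measurable map with a small perturbation of the identity can change it drastically on a set of positive measure.

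The missing idea, which is the content of the paper's Lemma~\ref{l.keylemma}, is an approximation argument: view $\psi(t)=\hm_{(x^s,x),t}$ as a bounded measurable map from $K$ into the (geodesically convex, compact) metric space $\mathcal{M}$ of probability measures on $\grassl$, approximate it in $L^1(\mu^c)$ by a \emph{continuous} map $\tilde\psi$ (this is Proposition~\ref{p.denseL1} of Appendix~\ref{a.denseL1}, which is nontrivial precisely because the target is measure-valued), and then estimate
$\int\dist_{\mathcal M}\bigl(\psi, H_{n,t}\circ\psi\circ h^u_n\bigr)\,d\mu^c$
by splitting through $\tilde\psi$: the term $\int\dist_{\mathcal M}(\tilde\psi\circ h^u_n,\psi\circ h^u_n)\,d\mu^c$ is controlled by the Jacobian bound via change of variables, the term $\int\dist_{\mathcal M}(\tilde\psi,\tilde\psi\circ h^u_n)\,d\mu^c$ by uniform continuity of $\tilde\psi$ together with $h^u_n\to\id$, and the holonomy term by $H^u_n\to\id$. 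Only after this fiberwise (fixed $x^s$) estimate does one integrate over $x^s$ with dominated convergence, as you propose. Without this density-of-continuous-maps step (or an equivalent Lusin-type argument), your plan does not yield strong $L^1$-convergence, so as written the proof has a genuine gap at its central point. A minor additional remark: your preliminary weak-$L^1$ convergence of $\Phi_n$ (via weak$^*$ continuity of $x\mapsto m_x$) is correct but plays no role once the strong convergence is proved; in the paper the continuity of $x\mapsto m_x$ (Corollary~\ref{continuidad}) is in fact deduced \emph{from} Proposition~\ref{p.conv}, not used to prove it.
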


We will deduce this from a somewhat stronger $L^1$-continuity result, whose precise statement
will be given in Proposition~\ref{p.L1}. The key ingredient in the proofs is a result about maps
on geodesically convex metric spaces that we are going to state in Lemma~\ref{l.keylemma} and
which will also be useful at latter stages of our arguments.

A metric space $N$ is \emph{geodesically convex} if there exists $\tau\ge 1$
such that for every $u,v\in N$ there exist a continuous path $\lambda:\left[0,1\right]\to N$
with $\lambda(0)=u$, $\lambda(1)=v$ and
\begin{equation}\label{eq.geodesicallyconvex}
\dist_N(\lambda(t),\lambda(s))\leq\tau\dist_N(u,v)
\text{ for every $s,t\in [0,1]$.}
\end{equation}
Geodesically convex metric spaces include convex subsets of a Banach space,
path connected compact metric spaces and complete connected Riemannian manifolds,
among other examples. The spaces of maps with values in a geodesically convex
metric space are analyzed in Appendix~\ref{a.denseL1}.

\begin{lemma}\label{l.keylemma}
Let $L$ be a geodesically convex metric space and take $(K,\fB_K,\mu_K)$ to be a probability
space such that $K$ is a normal topological space, $\fB_K$ is the Borel $\sigma$-algebra
of $K$ and $\mu_K$ is a regular measure.

Let $H_{j,t}:L\to L$ and $h_j:K\to K$, with $j\in \natural$ and $t\in K$, be such that
$$
\big(H_{j,t}(x)\big)_j \to x \quand \big(h_j(t)\big)_j \to t,
$$
uniformly in $t\in K$ and $x\in L$ and, moreover, the Jacobian $J h_j(t)$ of each $h_j$ with
respect to $\mu_K$ is uniformly bounded. Then
$$
\lim_j \int \dist_L \big(\psi(t), H_{j,t} \circ \psi \circ h_j(t)\big) \, d\mu_K(t) = 0
$$
for every bounded measurable map $\psi:K\to L$.
\end{lemma}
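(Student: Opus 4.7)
The plan is to establish the conclusion first for continuous $\psi$ via dominated convergence, then to extend to bounded measurable $\psi$ by approximating in $L^1$. The density of continuous maps in the appropriate $L^1$ space of $L$-valued maps, the key ingredient for the second step, is precisely what Appendix~\ref{a.denseL1} supplies for geodesically convex targets; this is where the geodesic convexity hypothesis on $L$ is actually used.

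\medskip

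For the continuous case, I would set $\epsilon_j = \sup_{t,x}\dist_L(H_{j,t}(x), x)$, which tends to zero by hypothesis, and apply the triangle inequality to get
$$
\dist_L\big(\psi(t), H_{j,t}(\psi(h_j(t)))\big) \le \dist_L\big(\psi(t), \psi(h_j(t))\big) + \epsilon_j.
$$
The uniform convergence $h_j(t) \to t$ and the continuity of $\psi$ make the right-hand side tend to $0$ pointwise, while the boundedness of $\psi$ supplies a uniform dominator; dominated convergence then closes this case.

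\medskip

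For a general bounded measurable $\psi$, I would fix $\eta > 0$ and use Appendix~\ref{a.denseL1} to pick a bounded continuous $\varphi:K \to L$ with $\int \dist_L(\psi, \varphi)\,d\mu_K < \eta$. The triangle inequality splits
\begin{align*}
\dist_L(\psi(t), H_{j,t}(\psi(h_j(t))))
& \le \dist_L(\psi(t), \varphi(t)) + \dist_L(\varphi(t), H_{j,t}(\varphi(h_j(t)))) \\
& \quad + \dist_L(H_{j,t}(\varphi(h_j(t))), H_{j,t}(\psi(h_j(t)))).
\end{align*}
For the third term the pointwise estimate $\dist_L(H_{j,t}(x), H_{j,t}(y)) \le 2\epsilon_j + \dist_L(x, y)$, combined with a change of variables using the uniform bounds on $Jh_j$, gives
$$
\int \dist_L\big(H_{j,t}(\varphi(h_j(t))), H_{j,t}(\psi(h_j(t)))\big)\,d\mu_K(t) \le 2\epsilon_j + C\eta
$$
for a constant $C$ depending only on the Jacobian bounds. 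Integrating the full decomposition and applying the continuous case to the middle term yields $\limsup_j \int \dist_L\big(\psi, H_{j,\cdot}(\psi\circ h_j)\big)\,d\mu_K \le (1+C)\eta$; sending $\eta \to 0$ concludes the proof.

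\medskip

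The main obstacle is the density ingredient itself. Since $L$ is only a metric space and not a linear space, standard smoothing or truncation arguments are unavailable, which is precisely why the geodesic convexity hypothesis is imposed and why the approximation result has to be isolated in Appendix~\ref{a.denseL1}. The rest of the argument is a routine double approximation, controlled by the uniform bounds on $\epsilon_j$ and on $Jh_j$.
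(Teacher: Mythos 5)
Your proof is correct and follows essentially the same route as the paper's: approximate $\psi$ by a continuous map using Proposition~\ref{p.denseL1}, control the composition with $h_j$ by a change of variables using the uniform Jacobian bound, and absorb the maps $H_{j,t}$ by their uniform closeness to the identity. The only cosmetic difference is that you settle the continuous case by dominated convergence, whereas the paper makes $\dist_L\big(\tilde\psi(t),\tilde\psi(h_j(t))\big)$ uniformly small for large $j$ directly from the continuity of the approximating map.
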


\begin{proof}
Take $j\in\natural$ to be sufficiently large that $d_L(H_{j,t}(x),x)<{\epsilon}/{4}$ for
every $t$ and $x$. Then,
\begin{equation*}
\begin{aligned}
\int \dist_L (\psi, H_{j,t} \circ \psi \circ h_j) \, d\mu_K
& \leq \int \big(\dist_L (\psi,\psi \circ h_j) \\
& \hspace{1cm} + \dist_L (\psi \circ h_j,H_{j,t} \circ \psi \circ h_j)\big) \, d\mu_K\\
& \leq \int \dist_L (\psi,\psi \circ h_j)\, d\mu_K + \frac{\epsilon}{4}.
\end{aligned}
\end{equation*}
Let $C>1$ be a uniform bound for $Jh_j(t)$. By Proposition~\ref{p.denseL1},
given $\epsilon>0$ there exists a continuous map $\tilde{\psi}:K\to L$ such that
$$
\int \dist_L(\tilde{\psi},\psi) \, d\mu_K<\frac{\epsilon}{4C}.
$$
Then, by change of variables,
$$
\int \dist_L (\tilde{\psi} \circ h_j,\psi \circ h_j ) \, d\mu_K
\le C \int \dist_L(\tilde{\psi},\psi) \, d\mu_K
< \frac{\epsilon}{4}.
$$
Then
\begin{equation*}
\begin{aligned}
& \int \dist_L (\psi,\psi \circ h_j)\, d\mu_K \\
& \hspace{1cm} \leq \int \big( \dist_L (\psi,\tilde{\psi}) + \dist_L(\tilde{\psi},\tilde{\psi} \circ h_j ) + \dist_L (\tilde{\psi} \circ h_j,\psi \circ h_j ) \big)\, d\mu_K\\
& \hspace{1cm} \leq \int \dist_L(\tilde{\psi},\tilde{\psi} \circ h_j ) \, d\mu_K + \frac{\epsilon}{2}.
\end{aligned}
\end{equation*}
By the continuity of $\tilde{\psi}$, increasing $j$ if necessary,
$$
\dist_L(\tilde{\psi}(t),\tilde{\psi} \circ h_j(t))<\frac{\epsilon}{4}
\text{ for every } t \in K.
$$
The conclusion follows from these inequalities.
\end{proof}

\begin{lemma}\label{l.abs.cont.limit}
Let $(x_n)_n$ be a sequence in $\Sigma$ converging to some $x \in \Sigma$ and $(j_n)_n$ be a sequence
of integer numbers such that $z_n=\sigma^{-j_n}(x_n)$ converges to some $z\in\Sigma$ and
$(f^{j_n}_{z_n})_n$ converges uniformly to some $g:K\to K$. Then $g$ is absolutely continuous with
respect to $\mu^c$ with bounded Jacobian. Moreover, the Jacobians of $f^{j_n}_{z_n}$  with respect to
$\mu^c$ are uniformly bounded.
\end{lemma}

\begin{proof}
By Lemma~\ref{c.inv.muc} we have that
$
(f^{j_n}_{z_n})_*\mu^c_{z_n}=\mu^c_{x_n}.
$
Taking $n\to \infty$ we get that
$
g_*\mu^c_{z}=\mu^c_{x},
$
which implies that $J_{\mu^c}g={\varrho(x,t)}/{\varrho(z,t)}$ is uniformly bounded.
Since, $J_{\mu^c}f^{j_n}_{z_n}$ converges uniformly to $J_{\mu^c}g$, it follows that the sequence
is uniformly bounded.
\end{proof}

\begin{proposition}\label{p.L1}
Let $\varphi:\grassl\to \real$ be a continuous function, $(x_{n})_n$ be a sequence in $\Sigma$
converging to some $x\in\Sigma$ and $(j_n)_n$ be a sequence of integer numbers such that $z_n=\sigma^{-j_n}(x_{n})$ converges and
$(f^{j_n}_{z_n})_n$ converges uniformly to some $g:K\to K$.
Then  $\int \varphi \, dm_{x_{n},f^{j_n}_{z_n}(t)}$ converges to $\int \varphi \, dm_{x,g(t)}$
 in $L^{1}(\mu^{c})$.
\end{proposition}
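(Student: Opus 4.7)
The plan is to apply Lemma~\ref{l.keylemma} with $L$ taken to be the space of Borel probability measures on $\grassl$, equipped with a metric $\dist_L$ metrizing the weak-$\ast$ topology (for instance, the bounded-Lipschitz distance); this $L$ is geodesically convex via the linear interpolation $\lambda\mapsto(1-\lambda)\mu_0+\lambda\mu_1$. It suffices to prove
\[
\int\dist_L\bigl(m_{x_n,f^{j_n}_{x_n}(t)},\,m_{x,g(t)}\bigr)\,d\mu^c(t)\to 0,
\]
since for any fixed continuous $\varphi$ the map $m\mapsto\int\varphi\,dm$ is $\|\varphi\|_\infty$-Lipschitz for $\dist_L$. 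First, because every $f^{j_n}_{x_n}$ is uniformly bi-Lipschitz by the mostly-neutral assumption, $g$ is a bi-Lipschitz homeomorphism of $K$, and by Lemma~\ref{l.abs.cont.limit} both $g$ and $g^{-1}$ have $\mu^c$-Jacobian bounded away from $0$ and $\infty$.

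Changing variables $s=g(t)$ reduces the task, up to a uniform constant, to showing
\[
\int\dist_L\bigl(m_{x_n,\tilde h_n(s)},\,m_{x,s}\bigr)\,d\mu^c(s)\to 0,
\]
where $\tilde h_n=f^{j_n}_{x_n}\circ g^{-1}\to\id_K$ uniformly with bounded Jacobian. The triangle inequality splits the integrand as
\[
\dist_L(m_{x_n,\tilde h_n(s)},m_{x,s})\le\dist_L(m_{x_n,\tilde h_n(s)},m_{x,\tilde h_n(s)})+\dist_L(m_{x,\tilde h_n(s)},m_{x,s}).
\]
The second integral vanishes by Lemma~\ref{l.keylemma} applied to the fixed map $\psi(s)=m_{x,s}$ with $H_{n,\cdot}\equiv\id_L$ and $h_n=\tilde h_n$. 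A further change of variables by $\tilde h_n^{-1}$ bounds the first integral, up to a uniform constant, by $\int\dist_L(m_{x_n,s},m_{x,s})\,d\mu^c(s)$. Everything therefore reduces to the ``base case'' $\int\dist_L(m_{x_n,s},m_{x,s})\,d\mu^c(s)\to 0$ as $x_n\to x$.

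For this base case the plan is to combine Proposition~\ref{p.limmesa}, which characterizes $\hm_{\hq}$ as the weak-$\ast$ limit of pushed-forward cylinder averages $A^n(q_n)_\ast m_{q_n}$, with the cylinder-independence of that limit furnished by Proposition~\ref{p.limmesb}. Since $x_n\to x$ in $\Sigma$, the two sequences eventually share arbitrarily long initial blocks, so for each fixed $k$ one can choose preimage sequences projecting respectively to $x_n$ and $x$ yet lying in a common $k$-cylinder; Proposition~\ref{p.limmesb} then forces the corresponding limit measures to coincide up to $u$-state holonomy corrections which contract as $k\to\infty$. A second application of Lemma~\ref{l.keylemma}, now with a non-trivial $H_{n,t}$ absorbing those residual holonomies, converts the resulting pointwise smallness into the required $L^1(\mu^c)$-smallness.

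The hard part is precisely this base case: the $u$-state structure only gives the weak-$\ast$ continuity of $x\mapsto m_x$ on $K\times\grassl$ and the continuity of $x\mapsto\mu^c_x$, whereas what must be extracted is the genuinely stronger fiberwise $L^1$-continuity of $x\mapsto m_{x,\cdot}$. Handling the almost-everywhere qualifiers from Proposition~\ref{p.limmesa} uniformly in $n$, and making the cylinder/holonomy approximation quantitative so that Lemma~\ref{l.keylemma} can be deployed a second time, is the delicate technical step where the partially hyperbolic setting genuinely differs from the uniformly hyperbolic case.
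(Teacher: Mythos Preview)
Your reduction to a ``base case'' is valid, but you have made the base case unnecessarily hard by insisting on working with $m_{x,\cdot}$. The paper's proof avoids your base case entirely by lifting one level up: it works with the finer disintegration $\{\hm_{x^s,x,t}\}$ over $\hM=\Sigma^-\times\Sigma\times K$ rather than with $\{m_{x,t}\}$. The point is that for each \emph{fixed} $x^s\in\Sigma^-$, the points $(x^s,x_n)$ and $(x^s,x)$ lie in the same local unstable set once $x_n$ and $x$ agree on a long enough initial block, so the $u$-state property \eqref{eq.ustate} gives directly
\[
\hm_{x^s,x_n,t_n}=(H^u_{n,x^s,t})_\ast\,\hm_{x^s,x,\,h^u_{n,x^s}(t)},
\]
where both the linear holonomy $H^u_{n,x^s,t}$ and the base holonomy $h^u_{n,x^s}$ tend uniformly to the identity as $n\to\infty$. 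This is exactly the form required by Lemma~\ref{l.keylemma} with the \emph{fixed} map $\psi(t)=\hm_{x^s,x,g(t)}$; one application of that lemma yields $L^1(\mu^c)$-convergence of $t\mapsto\hm_{x^s,x_n,t_n}$ to $t\mapsto\hm_{x^s,x,g(t)}$ for every $x^s$. Since $m_{x,t}=\int\hrho(x^s,x,t)\,\hm_{x^s,x,t}\,d\mu^s(x^s)$, dominated convergence in $x^s$ finishes the proof.

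Your proposed route through Propositions~\ref{p.limmesa} and~\ref{p.limmesb} is a detour that does not close: those results describe $\hm_{\hq}$ as an a.e.\ limit of push-forwards $A^n(q_n)_\ast m_{q_n}$, but give no uniform control in $\hq$, and your sketch (``choose preimage sequences \ldots\ lying in a common $k$-cylinder'') conflates backward cylinders under $f$ with the forward unstable structure that actually carries the $u$-state invariance. The missing idea is simply that the $u$-state hypothesis already relates $\hm_{x^s,x_n,\cdot}$ to $\hm_{x^s,x,\cdot}$ \emph{pointwise} via a small holonomy, with no martingale argument needed; the passage from $\hm$ to $m$ is then just an $x^s$-integration.
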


\begin{proof}
Denote $t_n=f^{j_n}_{z_n}(t)$. Fix $x^s\in \Sigma^-$ and let
$$
h^u_{n}=h^{u}_{(x^s,x_{n}),(x^s,x)} \circ f^{j_n}_{z_n}
\quad\text{and}\quad
H^{u}_{n,t}=H^{u}_{(x^s,x,h^{u}_{n}(t)),\left(x^s,x_{n},t_n\right)}.
$$
Let $\cM$ be the space of probability measures on $\grassl$ with the distance
$$
d(\xi,\eta)=\sup\left\{\left|\int\phi\,d\xi-\int\phi\,d\eta\right|: \sup|\phi|\le 1\right\}.
$$
This generates the weak$^*$ topology, and so $\cM$ is compact. By Remark~\ref{r.jacobianos_muc} and
Lemma~\ref{l.abs.cont.limit}, the Jacobians of $g^{-1}\circ h^{u}_{j}$ with respect to $\mu^c$ are
uniformly bounded. Applying Lemma~\ref{l.keylemma} with $L=\cM$, $H_{j,t}=\left(H^{u}_{j,t}\right)_{\ast}$,
$h_j=g^{-1}\circ h^{u}_{j}$ and $\psi(t)=\hm_{x^s,x,g(t)}$, we get that
$$
\lim_{n\to\infty}\int d\left(\hm_{x^s,x,g(t)}, \left(H^{u}_{n,t}\right)_{\ast}\hm_{x^s,x,h^{u}_{n}(t)}\right) \, d\mu_K(t) = 0.
$$
Observe that $\left(H^{u}_{n,t}\right)_{\ast}\hm_{x^s,x,h^{u}_{n}(t)}=\hm_{x^s,x_{n},t_n}$ and
so the previous relation implies that the sequence $t \mapsto \int \varphi \, d\hm_{x^s,x_{n},t_n}$
converges to $t \mapsto \int \varphi \, d\hm_{x_s,x,g(t)}$ in $L^1(\mu^c)$.

Next, by the definition of the disintegration,
\begin{equation*}
m_{x,t}=\int \hrho(x^s,x,t) \hm_{x^s,x,t} \, d\mu^{s}(x^s)
\end{equation*}
and so
$$
\begin{aligned}
& \int \vert \int \varphi(v)dm_{x_{n},t_n} -\int \varphi(v)dm_{x,g(t)} \vert \, d\mu^c \\
& \hspace*{2cm}\leq \int \int\vert \int\varphi\rho(x^s,x_n,t_n) d\hm_{x^s,x_{n},t_n} \\
&\hspace*{5cm} -\int\varphi\rho(x^s,x,g(t)) d\hm_{x^s,x,g(t)}\vert \, d\mu^c \, d\mu^s.
\end{aligned}
$$
So, noting that the integrand goes to zero as $n\to\infty$, for every $x^s\in \Sigma^-$,
the dominated convergence theorem ensures that
\begin{equation*}
\lim_{n\to \infty} \int \vert \int \varphi(v) \, dm_{x_{n},t_n}-\int \varphi(v) \, dm_{x,g(t)} \vert \, d\mu^c=0,
\end{equation*}
as we wanted to prove.
\end{proof}

The case when $j_n=0$ for every $n\in \natural$ suffices for proving Proposition~\ref{p.conv}
(the full statement will be needed in Section~\ref{s.dirac}). Indeed, it gives that if $(x_{n})_n \to x$
and $\varphi:\grassl\to\real$ is continuous then $t\mapsto\int \varphi \, dm_{x_{n},t}$ converges to
$t\mapsto\int \varphi \, dm_{x,t}$  in $L^{1}(\mu^{c})$. So, there exists a sub-sequence $(n_k)_k$
such that
$$
\int \varphi \, dm_{x_{n_k},t} \to \int \varphi \, dm_{x,t}\text{ for } \mu^{c}-\text{almost every }t.
$$
Moreover, since the space of continuous functions is separable, one can use a diagonal argument
(see e.g. the proof of \cite[Proposition~2.1.6]{FET}) to construct such a sub-sequence independent
of $\varphi$. In other words,
$$
m_{x_{n_k},t} \to m_{x,t} \text{ in the weak$^*$-topology, for } \mu^{c}-\text{almost every }t.
$$
This proves Proposition~\ref{p.conv}.

\begin{corollary}\label{continuidad}
The disintegration $\{m_{x}: x\in \Sigma\}$ is continuous.
\end{corollary}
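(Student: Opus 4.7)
The plan is to derive the weak$^*$-continuity of $x\mapsto m_x$ from the almost-everywhere fibre convergence provided by Proposition~\ref{p.conv}, together with the continuous dependence of $\mu^c_x$ on $x$. Take a sequence $x_n\to x$ in $\Sigma$ and a continuous test function $\varphi:K\times\grassl\to\real$. Since $\{m_{x,t}:(x,t)\in M\}$ disintegrates $m_x$ along the $K$-factor and the marginal of $m_x$ on $K$ is $\mu^c_x=\varrho(x,\cdot)\mu^c$, I would write
\begin{equation*}
\int\varphi\,dm_{x_n}
=\int\varrho(x_n,t)\Bigl(\int\varphi(t,v)\,dm_{x_n,t}(v)\Bigr)\,d\mu^c(t).
\end{equation*}
Set $G_n(t)=\int\varphi(t,v)\,dm_{x_n,t}(v)$, which is uniformly bounded by $\|\varphi\|_\infty$.

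Next, I would invoke the subsequence principle: to show $\int\varphi\,dm_{x_n}\to\int\varphi\,dm_x$, it suffices to prove that from any subsequence one may extract a further subsequence along which convergence holds. Given any subsequence of $(x_n)$, Proposition~\ref{p.conv} (applied with $j_n=0$, so that $g=\id_K$) yields a sub-subsequence $(x_{n_k})_k$ such that $m_{x_{n_k},t}\to m_{x,t}$ weakly in $\grassl$ for $\mu^c$-almost every $t\in K$. For such $t$, the continuity of $\varphi(t,\cdot)$ gives $G_{n_k}(t)\to G(t):=\int\varphi(t,v)\,dm_{x,t}(v)$.

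Finally, the boundedness of $G_{n_k}$ together with the uniform continuity of $\varrho$ (so that $\varrho(x_{n_k},t)\to\varrho(x,t)$ uniformly in $t$) allows me to apply the dominated convergence theorem to conclude
\begin{equation*}
\int\varrho(x_{n_k},t)G_{n_k}(t)\,d\mu^c(t)\longrightarrow
\int\varrho(x,t)G(t)\,d\mu^c(t)=\int\varphi\,dm_x.
\end{equation*}
Since every subsequence of $(x_n)$ admits a further subsequence with this property, the full sequence $\int\varphi\,dm_{x_n}$ converges to $\int\varphi\,dm_x$, proving weak$^*$-continuity of $x\mapsto m_x$. The only slightly delicate point is the handling of the $L^1$-versus-pointwise convergence provided by Proposition~\ref{p.conv}; passing through subsequences is exactly what is needed to upgrade $L^1$-convergence of $G_n$ to the full-sequence convergence of its integral against $\varrho(x_n,\cdot)\,d\mu^c$.
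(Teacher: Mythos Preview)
Your proof is correct and follows essentially the same approach as the paper: disintegrate $m_{x_n}$ along the $K$-fibres, invoke Proposition~\ref{p.conv} to pass to a subsequence along which $m_{x_{n_k},t}\to m_{x,t}$ weakly for $\mu^c$-almost every $t$, and then apply dominated convergence together with the subsequence principle. One small remark: you do not actually need uniform continuity of $\varrho$ (which need not hold when $\hSigma$ is non-compact); pointwise convergence $\varrho(x_{n_k},t)\to\varrho(x,t)$ plus the uniform boundedness of $\varrho$ already suffices for the dominated convergence step.
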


\begin{proof}
Let $\varphi:K \times \grassl\to \real$ be a continuous function. Given any $(x_{n})_n\to x$, we have that
$$
\begin{aligned}
& \vert \int \varphi dm_{x_{n}}-\int\varphi dm_{x} \vert \\
& \quad
= \vert \int\int \varphi(t,v) \, dm_{x_{n},t}(v) \rho(x_{n},t) \, d\mu^{c}(t) \\
& \qquad\qquad -\int\int \varphi(t,v) \, dm_{x,t}(v) \rho(x,t) \, d\mu^{c}(t)\vert \\
& \quad
\leq \int\vert \int \varphi(t,v) \rho(x_{n},t) \, dm_{x_{n},t}(v)-\int \varphi(t,v) \rho(x,t) \, dm_{x,t}(v)\vert \, d\mu^{c}(t).
\end{aligned}
$$
By Proposition~\ref{p.conv}, up to restricting to a subsequence, we may suppose that $(m_{x_n,t})_n$
converges to $m_{x,t}$ in the weak$^*$ sense, for $\mu^c$-almost every $t$. Then
$$
\int \varphi(t,v)\rho(x_{n},t) \, dm_{x_{n},t}(v) \to \int \varphi(t,v)\rho(x,t) \, dm_{x,t}(v)
$$
for $\mu^c$-almost every $t$. To get the conclusion it suffices to use this observation in the previous inequality,
together with dominated convergence.
\end{proof}

\begin{corollary}\label{c.outro}
We have $m_x = \int (F^k_y)_* m_y \, d\nu^k_x(y)$ for every $x\in\Sigma$ and $k\ge 1$, where $\nu^k_x$ is defined as
$$
\nu^k_x=\sum_{y\in \sigma^{-k}(x)} \frac{1}{J_{\nu}\sigma^k(y)}\delta_y.
$$
\end{corollary}

\begin{proof}
The $F$-invariance of $m$ gives that $m_x = \int (F^k_y)_* m_y \, d\nu^k_x(y)$ for $\muSigma$-almost every $x$, the continuity of the disintegration implies that this extends to every $x\in \Sigma$.
\end{proof}


\section{Dual graphs of Grassmannian sections}\label{s.zeromes}

Fix $1 \le l < d$. Let $w_1, \dots, w_l$ be a basis of a given subspace $W\in \grassl$.
The exterior product $w_1 \wedge \cdots \wedge w_l$ depends on the choice of the basis,
but its projective class does not. Thus we have a well defined map
\begin{equation}\label{eq.plucker}
\grassl\hookrightarrow \projective\exteriorl,
\quad W \mapsto [w_1 \wedge \cdots \wedge w_l],
\end{equation}
which can be checked to be an embedding: it is called the \emph{Pl\"ucker embedding} of $\grassl$.
The image is the projectivization of the \emph{space of $l$-vectors}
$$
\Lambda^{l}_v (\field^d)
=\lbrace w_1 \wedge w_2\wedge \cdots \wedge w_{l}\in \exteriorl
: w_i\in \field^d\text{ for }1\leq i\leq l\rbrace,
$$
which we denote by $\projective\Lambda^{l}_v (\field^d)$. This is a closed subset
of $\projective\exteriorl$, and it is invariant under the action induced on
$\projective\exteriorl$ by any linear map $B:\field^d\to\field^d$.
See \cite[Section~2]{AvV1} for more information about $l$-vectors.

The \emph{geometric hyperplane} $\Hyp V\subset \grassl$ associated to each $V\in \grassd$
is the set $\Hyp V$ of all subspaces $W\in\grassl$ which are \emph{not} in general position
relative to $V$. In other words,
\begin{equation*}
\Hyp V=\lbrace W\in \grassl: W\cap V\neq \{0\} \rbrace.
\end{equation*}
This may also be formulated using the Pl\"ucker embedding \eqref{eq.plucker}: if $v$ is any $(d-l)$-vector
representing $V$, then $\Hyp V$ consists of the subspaces $W\in\grassl$ represented by $l$-vectors
$w$ such $v \wedge w = 0$.

Let $\sectl$ denote the space of measurable maps $V$ from some full $\mu^c$-measure
subset of $K$ to the Grassmannian manifold of all $l$-dimensional subspaces of $\field^d$.
Define the \emph{dual graph} of each $\cV \in \sectd$ to be
$$
\graf\cV=\lbrace (t,v)\in K\times \grassl: v\in \Hyp\cV(t) \rbrace.
$$

Let $\hm$ be any $u$-state on $\hM\times\grassl$, $m$ be its projection to $M\times\grassl$ and
$\{m_x:x\in\Sigma\}$ be the Rokhlin disintegration of $m$ along the fibers $K\times\grassl$
(recall Section~\ref{s.L1continuity}). The purpose of this section is to prove the following fact:

\begin{proposition}\label{p.medidazero}
We have $m_{x}(\graf\cV)=0$ every $\cV\in \sectd$, $u$-state $\hm$ and every $x\in \Sigma$.
\end{proposition}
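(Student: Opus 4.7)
The plan is to argue by contradiction, propagating any hypothetical positive mass on a dual graph back to the fiber over $\hp$, and then deriving a contradiction from the pinching and twisting hypotheses at $\hp$.

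Suppose for contradiction that $m_{x_0}(\graf\cV_0)>0$ for some $x_0\in\Sigma$ and some measurable $\cV_0\in\hsec$. The first task is to propagate this positive mass to the fixed fiber $\{p\}\times K\times \grassl$. For this I would use the backward invariance from Corollary~\ref{c.outro}, together with the observation that the pre-image of a dual graph $\graf\cV$ under $F$ is again a dual graph, with the new section obtained by pushing $\cV$ through the action induced by $A$ on $\grassd$. Iterating, one obtains positive mass on appropriate dual graphs over preimages $y_k\in\sigma^{-k}(x_0)$. Since $\muSigma$ is positive on every cylinder, backward orbits landing in shrinking cylinders around $p$ exist and carry uniform mass. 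Combining with the $L^1$-continuity of the disintegration (Proposition~\ref{p.L1}) and the plain continuity of $x\mapsto m_x$ (Corollary~\ref{continuidad}), I would pass to a limit to obtain $m_p(\graf\cV)>0$ for a suitable limit section $\cV$; equivalently, $\hm_{\hp,t}(\Hyp\cV(t))>0$ on a positive $\hmu^c_{\hp}$-measure set of $t$.

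Next, exploit the pinching at $\hp$. By Corollary~\ref{c.invcont2} the measure $\hm_{\hp}$ is $F_p$-invariant, so its disintegration satisfies $\hm_{\hp,f_p(t)}=A(\hp,t)_*\hm_{\hp,t}$. The pinching gives an Oseledets decomposition $\complex^d=E^1(t)\oplus\cdots\oplus E^d(t)$ on the fiber at $\hp$, with simple Lyapunov spectrum for all exterior powers. A classical Furstenberg--Kifer type stationary measure argument, exploiting the dominated expansion of the $l$-th exterior power cocycle toward specific $l$-dimensional subspaces, then forces $\hm_{\hp,t}$ to be atomic and carried by the finitely many coordinate $l$-subspaces $E^I(t)=E^{i_1}(t)\oplus\cdots\oplus E^{i_l}(t)$ indexed by $I=\{i_1<\cdots<i_l\}$.

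Finally, invoke twisting. Because $\hm$ is a $u$-state, its conditional measures are invariant under unstable holonomies; combining this with stable holonomy transport along the homoclinic loop at $\hz$, the atomic structure of $\hm_{\hp,t}$ must be preserved by the concatenation $\cH^u_{\hz,\hp}\circ\cF_{\hz}^{-\imath}\circ\cH^s_{\hp,\hsigma^{\imath}(\hz)}$, which replaces each $E^I(t)$ with the subspace $V^I(t)$ built from \eqref{eq.Vi1}. The assumption $\hm_{\hp,t}(\Hyp\cV(t))>0$ then forces some $V^I(t)$ to lie in $\Hyp\cV(t)$ on a positive measure set; expanding this condition in the basis $\{E^j(t)\}$ translates into the vanishing of an algebraic minor $m_{I,J}(t)$ on that set, contradicting the twisting condition \eqref{eq.suffar}. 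The main obstacle, in my view, is the first step: tracking how a measurable section transforms under backward iteration and taking weak-$*$ limits without losing the positive mass on a codimension-one subvariety of $\grassl$; the $L^1$-continuity in Proposition~\ref{p.L1} will need to be supplemented by a Lusin/selection argument to extract a concrete limiting section $\cV$.
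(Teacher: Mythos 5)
There is a genuine gap, in fact two. First, the whole middle of your argument rests on invariance of the conditional measure at the exact fiber over $\hp$, which is not available: the fiber $\{\hp\}\times K$ has $\hmu$-measure zero, so $\hm_{\hp}$ is not pinned down by the disintegration, and Corollary~\ref{c.invcont2} gives the push-forward identity only for $\hmuSigma_p$-almost every $\hx\in W^s_{loc}(p)$, not at $\hp$ itself. What you actually control after projecting is $m_p$, which satisfies only the stationarity relation of Corollary~\ref{c.outro} (an average over \emph{all} $\sigma$-preimages), not invariance under $F_p$ alone; under that relation the conditionals $m_{p,t}$ are in general non-atomic — they are averages over $x^s$ of the eventual Dirac masses — and that is precisely why the proposition can hold. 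Indeed, if the measure whose positivity on $\graf\cV_0$ you propagated really were atomic along measurable sections $t\mapsto E^I(t)$, you could choose $\cV(t)$ meeting an atom measurably in $t$ and contradict the very statement being proved, so the propagation step and the atomicity step cannot both concern the same measure. The propagation step itself is also not secured by Proposition~\ref{p.L1} plus a Lusin/selection argument: the backward images $\cF^{-k}_{y}\cV_0$ have no reason to converge to any section, and upper semicontinuity of $V\mapsto m_{x,t}(\Hyp V)$ only helps if you compare against a maximizing section. The paper's device of the supremum $\gamma=\sup\{m_x(\graf\cV):\cV\in\hsec,\ x\in\Sigma\}$, shown to be attained and independent of $x$ (Lemma~\ref{l.supattained}, Proposition~\ref{p.supmeasure}) and compatible with backward iteration and with the two-sided conditionals (Lemmas~\ref{p.backgamma} and~\ref{l.almostgamma}), exists precisely to bypass both of these problems; nothing in your outline substitutes for it.

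Second, the closing contradiction is a non sequitur: knowing that some $V^I(t)$ lies in $\Hyp\cV(t)$ on a positive-measure set, for an \emph{arbitrary} measurable section $\cV$, says nothing about the minors $m_{I,J}$, because those minors measure the position of the $V^i$ relative to the Oseledets coordinate subspaces $E^j$, not relative to $\cV$. To make pinching and twisting bite, the paper first shows that a maximizing section over $p$ may be taken to be a coordinate section $E^{\tilde I}$ (Lemma~\ref{l.semperda}), transports it around the homoclinic loop, and then proves the quantitative statement that any $N$ of the backward iterates $\cV^{m_k}_{\hp}$ have empty common intersection on a set of measure close to $1$ (Proposition~\ref{p.intersection}, via the Vandermonde-type Lemma~\ref{l.nonzerodet}, using the distinct exponent sums from pinching and the subexponential minors \eqref{eq.suffar} from twisting); if $\gamma>0$, summing the masses of these $N$-wise disjoint dual graphs exceeds $1$, and the delicate case of the exact fiber is handled by approximating along points $(x^s_k,p)\to\hp$ where the almost-everywhere identities do hold. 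Your proposed route would need all of these ingredients (or genuine replacements for them), so as written it does not go through.
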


The following terminology will be useful.
For each $\hx\in\hSigma$,  consider the following push-forward maps $\sectl \to \sectl$:
\begin{itemize}
\item[(a)] $V \mapsto \cF_{\hx}V$ given by $$\cF_{\hx}V (t)=\hA(\hx,s) V(s)
\text{ with } s = (\hf_{\hx})^{-1}(t);
$$
\item[(b)] $V \mapsto \cH^s_{\hx,\hy} V$ given, for $\hy\in W^s_{\loc}(\hx)$, by
$$
\cH^s_{\hx,\hy}V(t)=H^s_{(\hx,s),(\hy,t)}V(s)
\text{ with } s = h^s_{\hy,\hx}(t);
$$
\item[(c)] $V \mapsto \cH^u_{\hx,\hy} V$ given, for $\hy\in W^u_{\loc}(\hx)$, by
$$
\cH^u_{\hx,\hy}V(t)=H^u_{(\hx,s),(\hy,t)}V(s)
\text{ with } s = h^u_{\hy,\hx}(t).
$$
\end{itemize}
These are well defined because the $h^s$ and $h^u$ holonomy maps are absolutely continuous
with respect to $\mu^c$, as a consequence of \eqref{eq.abs_cont}.

%

\subsection{Graphs have measure zero}

Starting the proof of Proposition~\ref{p.medidazero}, recall that each $m_{x}$ is a probability
measure on $K\times \grassl$, and
$$
m_{x} = \int m_{x,t}  \varrho(x,t) \, d\mu^{c}(t),
$$
where each $m_{x,t}$ is a probability measure on $\{(x,t)\}\times\grassl$.
Recall also that $x\mapsto m_x$ is continuous, by Corollary~\ref{continuidad}.

Let $x\in\Sigma$ be fixed for the time being, and consider the functions
$$
\begin{aligned}
&G:K\times \grassd \to \real, \quad G\left(t,V\right)=m_{x,t}(\Hyp V) \text{ and }\\
&g:K \to \real, \quad  g(t)=\sup\{m_{x,t}(\Hyp Z): Z \in \grassd\}.
\end{aligned}
$$

\begin{lemma}\label{l.gG}
$G:K\times \grassd \to \real$ and $g(t):K \to \real$ are measurable functions.
\end{lemma}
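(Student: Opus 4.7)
The plan is to handle $G$ and $g$ together by building a continuous-in-$V$ approximation $G_n \downarrow G$, and then showing that the suprema $g_n = \sup_V G_n$ are manifestly measurable and that $g = \inf_n g_n$.

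To see that $G$ is jointly Borel measurable, note first that the set
$$
\{(V,W) \in \grassd \times \grassl : W \in \Hyp V\}
= \{(V,W) : v \wedge w = 0\}
$$
is closed, since by the Pl\"ucker embedding $(V,W) \mapsto \|v \wedge w\|$ (with $v,w$ unit representatives, so that the norm is well defined) is continuous and the set is its zero locus. For each fixed $V$, the function $t \mapsto m_{x,t}(\Hyp V)$ is measurable, by the definition of the Rokhlin disintegration applied to the Borel set $\Hyp V$. For each fixed $t$, the function $V \mapsto m_{x,t}(\Hyp V)$ is upper semicontinuous: if $V_n \to V$ then by the closedness above $\limsup \mathbf{1}_{\Hyp V_n}(W) \le \mathbf{1}_{\Hyp V}(W)$ pointwise, so the Portmanteau-style argument gives $\limsup m_{x,t}(\Hyp V_n) \le m_{x,t}(\Hyp V)$. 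Thus $G$ is a Carath\'eodory function and in particular jointly Borel measurable.

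To make $g$ accessible, I fix a sequence of continuous functions $\chi_n:[0,\infty)\to [0,1]$ with $\chi_n(0)=1$, $\chi_n \equiv 0$ on $[1/n,\infty)$, and $\chi_n \downarrow \mathbf{1}_{\{0\}}$, and set
$$
G_n(t,V) = \int \chi_n(\|v \wedge w\|)\, dm_{x,t}(W).
$$
Dominated convergence gives continuity of $G_n(t,\cdot)$ for each fixed $t$, while the Rokhlin disintegration property applied to the continuous bounded integrand gives measurability in $t$ for each fixed $V$; as above, $G_n$ is jointly Borel measurable. Monotone convergence yields $G_n \downarrow G$, so $G$ is again Borel measurable (recovering the first claim directly). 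Now define $g_n(t) = \sup_V G_n(t,V)$. Because $\grassd$ is compact and $G_n(t,\cdot)$ is continuous, the supremum equals $\sup_{V\in D} G_n(t,V)$ for any countable dense set $D \subset \grassd$, and this is a countable supremum of measurable functions; hence each $g_n$ is measurable.

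Finally I claim $g = \inf_n g_n$, which finishes the proof. The inequality $g \le g_n$ is immediate from $G \le G_n$. For the reverse, fix $t$ and let $V_n \in \grassd$ realize $g_n(t) = G_n(t,V_n)$ (exists by compactness and continuity). Extract $V_{n_k} \to V^*$. For any fixed $m$, once $n_k \ge m$ we have $G_m(t,V_{n_k}) \ge G_{n_k}(t,V_{n_k}) = g_{n_k}(t)$ by monotonicity of $(G_n)$; letting $k \to \infty$ and using continuity of $G_m(t,\cdot)$ gives $G_m(t,V^*) \ge \inf_n g_n(t)$, and then $G(t,V^*) = \inf_m G_m(t,V^*) \ge \inf_n g_n(t)$, so $g(t) \ge \inf_n g_n(t)$. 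The only real obstacle is this last step: bare upper semicontinuity of $G(t,\cdot)$ is not enough to replace $\sup_V$ by a countable supremum, and the approximation $G_n \downarrow G$ together with the compactness argument is precisely what bridges that gap.
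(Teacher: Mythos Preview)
Your proof is correct and takes a genuinely different route from the paper's. The paper approximates $G$ by step functions constant on the elements of a refining sequence of finite partitions $\cP^n=\{P^n_1,\dots,P^n_{n_j}\}$ of $\grassd$: setting $G_n(t,V)=m_{x,t}\big(\Hyp P^n(V)\big)$ (where $P^n(V)$ is the atom containing $V$ and $\Hyp P^n_j=\bigcup_{V'\in P^n_j}\Hyp V'$), measurability of each $G_n$ is immediate since it is a finite sum of measurable functions of $t$ times indicators in $V$, and $G_n\downarrow G$ by continuity of measure. For $g$ the paper simply takes $g_n(t)=\max_j m_{x,t}(\Hyp P^n_j)$, a finite maximum of measurable functions, and asserts $g_n\to g$.

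Your smoothing $G_n(t,V)=\int \chi_n(\|v\wedge w\|)\,dm_{x,t}(W)$ has the advantage that $G_n(t,\cdot)$ is \emph{continuous}, so the Carath\'eodory criterion applies cleanly and the supremum over $V$ reduces to a countable supremum without further work. It also makes the passage $g=\inf_n g_n$ fully explicit via the compactness argument you give, whereas in the paper's partition scheme that convergence is left to the reader (and in fact needs essentially the same compactness/upper-semicontinuity reasoning you spell out). One minor remark: your first paragraph asserts joint measurability directly from ``measurable in $t$, u.s.c.\ in $V$''; this is true but not quite the textbook Carath\'eodory statement, and you rightly note that your $G_n\downarrow G$ construction recovers it independently, so nothing is lost.
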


\begin{proof}
Let $\cP^1\prec\cP^2\prec \cdots$ be an increasing sequence of finite partitions of $\grassd$
such that  $\cP=\vee_{i\in \natural}\cP^i$ is the partition into points (that such a sequence
exists is clear, e.g., because the Grassmannian is compact).
Write $\cP^i=\{P^i_1,\cdots,P^i_{n_i}\}$ and then define
$$
G_n:K\times \grassd \to \real,\quad G_n(t,V)= \sum_{j=1}^{n_j} m_{x,t}(\Hyp P^n_j)\chi_{P^n_j}(V)
$$
where $\chi_B:\grassd\to \real$ denotes the characteristic function of a measurable
set $B \subset \grassd$. By the Rokhlin disintegration theorem, each $t\mapsto m_{x,t}(\Hyp P^n_j)$
is a measurable function. It follows that $G_n$ is measurable for every $n$.
Moreover, $(G_n)_n$ converges to $G$ at every point. Thus, $G$ is measurable. Analogously,
$$
g_n:K \to \real,\quad g_n(t,V)= \max\{m_{x,t}(\Hyp P^n_j): j=1, \dots, n\}
$$
is measurable for every $n$, and $(g_n)_n$ converges pointwise to $g$.
Thus the map $g$ is measurable.
\end{proof}

For each fixed $t\in K$, the function $V \mapsto G(t,V)$ is upper semicontinuous: if $(V_n)_n$
converges to $V$ then $\Hyp V_n$ is contained in a small neighborhood of $ \Hyp V$,
for every large $n$, and then $m_{x,t}(\Hyp V_n)$ can not be much larger than $m_{x,t}(\Hyp V)$.
Since $\grassd$ is compact, it follows that the set
$$
\Gamma(t)=\lbrace V \in\grassd: G(t,V)=g(t)\rbrace
$$
is compact and non-empty (the supremum in the definition of $g$ is attained) for every $t\in K$.

\begin{theorem}[Theorem~III.30 in \cite{CaV77}]\label{t.measurability}
Let $\left( X, \fB, \mu\right)$ be a complete probability space and $Y$ be a separable complete metric space.
Denote by $\fB(Y)$ the Borel $\sigma-$algebra of $Y$.
Let $\kappa(Y)$ be the space of compact subsets of $Y$, with the Hausdorff topology. The following are equivalent:
 \begin{enumerate}
  \item a map $x\to K_{x}$ from $X$ to $\kappa(Y)$ is measurable;
  \item its graph $\lbrace (x,y)\in X \times Y: y \in K_x \rbrace$ is in $\fB \otimes \fB(Y)$;
  \item $\lbrace x\in X: K_x \cap U \neq \emptyset \rbrace \in \fB$ for any open set $U\subset Y$.
 \end{enumerate}
Moreover, any of these conditions implies that there exists a measurable map $\sigma:X \to Y$
such that $\sigma(x)\in K_x$ for every $x\in X$.
\end{theorem}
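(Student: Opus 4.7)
The statement is the classical Castaing--Valadier characterization of measurable multifunctions together with the Kuratowski--Ryll-Nardzewski selection theorem, and I would prove the equivalences first and the selection afterwards. Recall that the Hausdorff topology on $\kappa(Y)$ has as subbase the two families $\{K : K\cap U\neq\emptyset\}$ and $\{K : K\subset U\}$ indexed by open $U\subset Y$, so that (1) asserts that pre-images of both families lie in $\fB$, while (3) asks this only for the first family.

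For $(3)\Rightarrow (1)$: given a closed set $F\subset Y$, compactness of each $K_x$ together with the finite intersection property gives
$$
\{x : K_x\cap F\neq\emptyset\}=\bigcap_{n\ge 1}\{x : K_x\cap V_n\neq\emptyset\},\quad V_n=\{y : \dist_Y(y,F)<1/n\},
$$
with each $V_n$ open, so the right-hand side is in $\fB$ by (3); taking $F=Y\setminus U$ yields measurability of $\{x : K_x\subset U\}$ as the complement. For $(1)\Rightarrow (2)$: fix a countable dense set $(y_n)\subset Y$ and write
$$
\{(x,y)\in X\times Y : y\in K_x\}=\bigcap_{k\ge 1}\bigcup_{n\ge 1}\Big(\{x : K_x\cap B(y_n,1/k)\neq\emptyset\}\times B(y_n,1/k)\Big),
$$
which is a countable combination of measurable rectangles and so lies in $\fB\otimes\fB(Y)$. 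For $(2)\Rightarrow (3)$: observe that $\{x : K_x\cap U\neq\emptyset\}$ is the $X$-projection of the measurable set $\mathrm{graph}(K)\cap(X\times U)$; here the completeness of $(X,\fB,\mu)$ enters decisively, because one must invoke the measurable projection theorem to conclude that this projection, which is only known to be analytic, already lies in the completed $\sigma$-algebra $\fB$.

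For the measurable selection I would perform the Kuratowski--Ryll-Nardzewski construction. Fix a dense sequence $(y_n)\subset Y$ and build inductively measurable functions $\sigma_k:X\to Y$ taking values in $\{y_n\}$ with $\dist_Y(\sigma_k(x),K_x)<2^{-k}$ and $\dist_Y(\sigma_k(x),\sigma_{k-1}(x))<2^{-k+2}$. At step $k$ partition $X$ according to the smallest index $n$ for which simultaneously $K_x\cap B(y_n,2^{-k})\neq\emptyset$ and $\dist_Y(y_n,\sigma_{k-1}(x))<2^{-k+2}$; each cell of this partition lies in $\fB$ by (3) applied to $B(y_n,2^{-k})$ together with the measurability of $\sigma_{k-1}$ inductively. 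The sequence $(\sigma_k(x))_k$ is then Cauchy uniformly in $x$, its pointwise limit $\sigma$ is measurable as a limit of measurable functions, and $\sigma(x)\in K_x$ because $\dist_Y(\sigma(x),K_x)=0$ and $K_x$ is closed.

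\textbf{Main obstacle.} The only non-routine step is $(2)\Rightarrow (3)$: the projection of a Borel set is in general only analytic and need not be Borel, so one cannot hope to recover measurability of $\{x : K_x\cap U\neq\emptyset\}$ from graph measurability without using that $(X,\fB,\mu)$ is complete. This is exactly where the measurable projection theorem (proved via the Souslin operation, or equivalently Choquet capacitability) is invoked. Everything else---the other equivalences and the selection procedure---is bookkeeping with separability, compactness, and the finite intersection property.
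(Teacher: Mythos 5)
Your proof is correct. Note, however, that the paper does not prove this statement at all: it is quoted verbatim from Castaing--Valadier \cite[Theorem~III.30]{CaV77}, and your argument is essentially the standard one found there -- the Vietoris/Hausdorff subbase bookkeeping for $(3)\Rightarrow(1)\Rightarrow(2)$, the measurable projection theorem (the one genuinely non-elementary ingredient, where completeness of $(X,\fB,\mu)$ is used) for $(2)\Rightarrow(3)$, and the Kuratowski--Ryll-Nardzewski approximation scheme for the selection. The only point worth making explicit is that $\kappa(Y)$ must be understood as the \emph{nonempty} compact subsets for the selection step (and your induction should start from a $\sigma_0$ whose existence uses exactly this), which is how the theorem is applied in the paper, where the sets $\Gamma(t)$ are shown to be compact and nonempty.
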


\begin{lemma}\label{l.supattained}
A given $\cV\in\sectd$ realizes the supremum of
\begin{equation*}
\big\{m_{x}(\graf\cV): \cV\in\sectd \big\}
\end{equation*}
if and only if $\cV(t)\in \Gamma(t)$ for $\mu^c$-almost every $t\in K$.
Moreover, there exists some $\cV_x\in\sectd$ that does realize this supremum.
\end{lemma}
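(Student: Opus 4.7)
The first step is to rewrite $m_x(\graf\cV)$ via the disintegration $m_x = \int m_{x,t}\,\varrho(x,t)\,d\mu^c(t)$. By the definition of the dual graph,
\begin{equation*}
m_x(\graf\cV) = \int m_{x,t}(\Hyp\cV(t))\,\varrho(x,t)\,d\mu^c(t) = \int G(t,\cV(t))\,\varrho(x,t)\,d\mu^c(t).
\end{equation*}
Since $\varrho>0$ and $G(t,V)\le g(t)$ with equality precisely when $V\in\Gamma(t)$, it follows at once that
\begin{equation*}
m_x(\graf\cV) \le \int g(t)\,\varrho(x,t)\,d\mu^c(t),
\end{equation*}
with equality if and only if $\cV(t)\in\Gamma(t)$ for $\mu^c$-almost every $t$. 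This already gives the characterization of the maximizers; what remains is the existence of a measurable section with $\cV(t)\in\Gamma(t)$ almost everywhere (and hence realizing the bound).

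For existence I plan to invoke the measurable selection result, Theorem~\ref{t.measurability}, applied to the multifunction $t\mapsto\Gamma(t)$. The target $\grassd$ is a compact, hence separable and complete, metric space; the upper semicontinuity of $V\mapsto G(t,V)$ combined with compactness of $\grassd$ yields that each $\Gamma(t)$ is nonempty and compact, as was noted just before the statement of the lemma. The only point left to check is measurability of $t\mapsto\Gamma(t)$, and I would verify this through condition~(2) of the theorem: the graph of $\Gamma$ is
\begin{equation*}
\{(t,V)\in K\times\grassd : G(t,V) - g(t) = 0\},
\end{equation*}
which lies in $\fB(K)\otimes\fB(\grassd)$ because $G$ and $g$ are both measurable by Lemma~\ref{l.gG}. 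The theorem then supplies a measurable $\cV_x:K\to\grassd$ with $\cV_x(t)\in\Gamma(t)$ for every $t$, so $\cV_x\in\hsec$ realizes the supremum and the proof is complete.

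\textbf{Expected main obstacle.} The substantive content lies in the measurable selection, but all of the analytic work has been prepared by Lemma~\ref{l.gG} (measurability of $G$ and $g$) and the pointwise compactness/nonemptiness of $\Gamma(t)$. The only mild technicality is that Theorem~\ref{t.measurability} is stated for a \emph{complete} probability space, so if necessary I would pass to the $\mu^c$-completion of $\fB(K)$ and then, after selection, modify $\cV_x$ on a $\mu^c$-null set to recover a Borel section; this has no effect on the equivalence, which is only required $\mu^c$-almost everywhere.
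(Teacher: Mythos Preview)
Your proposal is correct and follows essentially the same approach as the paper: both rewrite $m_x(\graf\cV)$ via the disintegration, compare pointwise with $g(t)$, and invoke Theorem~\ref{t.measurability} through condition~(2) (the graph $\{(t,V):G(t,V)=g(t)\}$ is measurable by Lemma~\ref{l.gG}) to produce the measurable selector $\cV_x$. Your remark about passing to the $\mu^c$-completion is a legitimate technical point that the paper leaves implicit.
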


\begin{proof}
By Lemma~\ref{l.gG}, the set
$$
\{(t,V): V \in \Gamma(t)\}
= \{(t,V): G(t,V) = g(t)\}
$$
is a measurable subset of $K \times \grassd$.
Compare the second condition in Theorem~\ref{t.measurability}.
Thus, from the last claim in the theorem, there exists some measurable map $\cV_x:K \to \grassd$
such that $\cV_x(t)\in \Gamma(t)$ for every $t\in K$. In other words,
$$
m_{x,t}(\Hyp\cV_x(t))
= G\big(t,\cV_x(t)\big)
= g(t)
= \sup_Z m_{x,t}(\Hyp Z)
$$
for every $t\in K$. Given any $\cV \in \sectd$ we have
\begin{equation}\label{eq.V0V}
\begin{aligned}
m_{x}(\graf\cV)
 & = \int m_{x,t}(\Hyp \cV(t))\varrho(x,t) \, d\mu^{c}(t) \\
 & \leq \int \sup_{Z} m_{x,t}(\Hyp Z)\varrho(x,t) \, d\mu^{c}(t)\\
 & = \int m_{x,t}(\Hyp \cV_x(t))\varrho(x,t) \, d\mu^{c}(t)
  = m_{x}(\graf\cV_x).
\end{aligned}
\end{equation}
Thus, $\cV_x$ does realize the supremum. Moreover, \eqref{eq.V0V} is an equality if and only if
$G(x,\cV(t)) = g(t)$ for $\mu^{c}$-almost every $t\in K$.
\end{proof}

So far, we kept $x \in \Sigma$ fixed. The next proposition shows that the supremum in
Lemma~\ref{l.supattained} is actually independent of $x$. Denote
$$
\gamma= \sup\{m_{x}(\graf\cV): \cV\in \sectd, x\in\Sigma\}.
$$

\begin{proposition}\label{p.supmeasure}
$\sup\{m_{x}(\graf\cV):\cV\in \sectd\} = \gamma$ for every $x\in\Sigma$.
\end{proposition}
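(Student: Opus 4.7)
The plan is to establish three properties of the map $x \mapsto \gamma_x := \sup_{\cV \in \hsec} m_x(\graf\cV)$ on $\Sigma$: upper semicontinuity, attainment of the supremum $\gamma = \sup_{x \in \Sigma}\gamma_x$, and backward $\sigma$-invariance of the attaining set $E := \{x \in \Sigma : \gamma_x = \gamma\}$. Combined with density of the backward preimages of any point in $\Sigma$, these will force $E = \Sigma$, which is exactly the claim.

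For upper semicontinuity I would take $x_n \to x$ in $\Sigma$ and, from any subsequence, use Proposition~\ref{p.conv} (applied with $j_n = 0$) to extract a further subsequence along which $m_{x_n, t} \to m_{x, t}$ weakly for $\mu^c$-a.e.\ $t \in K$. For each $n$ pick a maximising section $\cV_n \in \hsec$ as in Lemma~\ref{l.supattained}, so $g_{x_n}(t) = m_{x_n,t}(\Hyp\cV_n(t))$. For a fixed good $t$, extract once more so that $\cV_n(t) \to V^*(t) \in \grassd$ by compactness. The Portmanteau theorem, applied to the Hausdorff-convergent closed sets $\Hyp\cV_n(t) \to \Hyp V^*(t)$, gives $\limsup_n g_{x_n}(t) \le m_{x,t}(\Hyp V^*(t)) \le g_x(t)$ at $\mu^c$-a.e.\ $t$. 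Since $\varrho$ is continuous and uniformly bounded, the reverse Fatou lemma yields $\limsup_n \gamma_{x_n} \le \gamma_x$, and the usual subsequence trick upgrades the bound from the chosen subsequence to the original one. Hence $E$ is closed, and non-empty by compactness of $\Sigma$.

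The key rigidity step is that $E$ is backward $\sigma$-invariant: if $x \in E$, then $\sigma^{-1}(x) \subseteq E$. Indeed, fix $x \in E$ and a maximiser $\cV_x \in \hsec$ with $m_x(\graf\cV_x) = \gamma$. By Corollary~\ref{c.outro} with $k = 1$,
\begin{equation*}
m_x = \sum_{y \in \sigma^{-1}(x)} J_\mu f(y)^{-1} (F_y)_* m_y,
\qquad
\sum_{y \in \sigma^{-1}(x)} J_\mu f(y)^{-1} = 1.
\end{equation*}
For each preimage $y$, the section $\tilde\cV_y(s) := A(y,s)^{-1}\cV_x(f_y(s))$ lies in $\hsec$ and satisfies $(F_y)^{-1}\graf\cV_x = \graf\tilde\cV_y$, so
\begin{equation*}
\gamma = m_x(\graf\cV_x) = \sum_y J_\mu f(y)^{-1} m_y(\graf\tilde\cV_y) \le \sum_y J_\mu f(y)^{-1} \gamma_y \le \gamma.
\end{equation*}
Equality throughout forces $\gamma_y = \gamma$ for every $y \in \sigma^{-1}(x)$. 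Iterating, $\bigcup_{n \ge 0} \sigma^{-n}(x^*) \subseteq E$ for any $x^* \in E$; since this backward orbit is dense in $\Sigma$ by topological transitivity of $\sigma$ and $E$ is closed, $E = \Sigma$.

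The main obstacle will be the upper semicontinuity step, which requires carefully meshing the almost-everywhere weak$^*$ convergence of the fibre measures $m_{x_n,t}$ supplied by Proposition~\ref{p.conv} with a pointwise (and $t$-dependent) selection of maximising subspaces. Once this is in hand, the convex decomposition from Corollary~\ref{c.outro} together with the extremality of $\gamma$ rigidly propagate the maximal value to all backward preimages, and topological transitivity finishes the job.
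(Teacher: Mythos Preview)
Your argument is correct when the alphabet $X$ is finite, but it relies on compactness of $\Sigma$ to guarantee that the upper semicontinuous function $x\mapsto\gamma_x$ attains its supremum, i.e.\ that $E\neq\emptyset$. The paper explicitly allows $X$ to be countably infinite, and in that case $\Sigma^{+}=X^{\mathbb{Z}_{\ge 0}}$ is not compact, so $E$ could a priori be empty and your propagation step never gets started.

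The paper avoids this by reversing the order of the two ingredients. Instead of first finding a point where $\gamma$ is attained and then pushing backward, it fixes an arbitrary cylinder $[J]$ and uses the convex decomposition $m_{\tilde x}(\graf\tilde\cV)=\int m_y(\graf\cV^{k}_y)\,d\mu^u_{k,\tilde x}(y)$ from Corollary~\ref{c.outro} together with the bounded-distortion estimate \eqref{eq.distortion} on the backward-average measures $\mu^u_{k,\tilde x}$ to show directly that $\sup\{m_x(\graf\cV):x\in[J],\,\cV\in\hsec\}=\gamma$ for every cylinder $[J]$. This yields, for \emph{every} $x\in\Sigma$, a sequence $x_n\to x$ with $\gamma_{x_n}\to\gamma$, without ever needing $\gamma$ to be attained somewhere in advance. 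From that point on the paper's argument is essentially identical to your upper-semicontinuity step (Proposition~\ref{p.conv}, pointwise compactness of $\grassd$, the $\varepsilon$-neighbourhood Portmanteau trick, and reverse Fatou). Your backward-invariance step is correct and is precisely the content of Lemma~\ref{p.backgamma}, which the paper states immediately after this proposition; for finite shifts your route is a legitimate and slightly more streamlined alternative.
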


\begin{proof}
Given any cylinder $[J]\subset \Sigma$, choose a positive constant $c<\mu([J])/\kappa$,
where $\kappa>0$ is the constant in \eqref{eq.distortion}.
Consider any $\tilde{x}\in \Sigma$ and $\tilde\cV\in \sectd$.
For each $k \ge 1$ and $y\in \sigma^{-k}(\tilde{x})$, define $\cV^{k}_{y}\in\sectd$ by
$$
\cF^k_{\hat y} \cV^{k}_{y} = \tilde\cV, \text{ that is, }
A^k(t,y)\cV^{k}_{y}(t) = \tilde\cV(f_y^k(t)) \text{ for each $t\in K$.}
$$
By Corollary~\ref{c.outro},
$m_{\tilde{x}}(\graf\tilde\cV)=\int m_y(\graf\cV^{k}_{y}) \, d\mu^u_{k,\tilde{x}}(y)$ and so
\begin{equation*}
\begin{aligned}
m_{\tilde{x}}(\graf\tilde\cV)
& \leq \mu^u_{k,\tilde{x}}([J])\sup\lbrace m_x(\graf\cV): \cV\in\sectd, x\in [J] \rbrace \\
& \hspace{1cm} + (1-\mu^u_{k,\tilde{x}}([J]))\gamma.
\end{aligned}
\end{equation*}
By \eqref{eq.distortion}, there exist arbitrary large values of $k$ such that
$\mu^u_{k,\tilde{x}}([J])\geq c$. Thus
\begin{equation*}
\begin{aligned}
m_{\tilde{x}}(\graf\tilde\cV)
& \leq c \sup\lbrace m_x(\graf\cV): \cV\in\sectd, x\in [J] \rbrace\rbrace\\
&\hspace{1cm} +(1-c)\gamma.
\end{aligned}
\end{equation*}
Varying $\tilde{x}\in \Sigma$ and $\tilde\cV \in \sectd$, we can make the left-hand side arbitrarily
close to $\gamma$. It follows that
\begin{equation*}
\sup\lbrace m_x(\graf\cV): \cV\in\sectd, x\in [J] \rbrace \geq \gamma.
\end{equation*}
The converse inequality is obvious. Thus, we have shown that the supremum over any cylinder $[J]$ coincides
with $\gamma$.

So, given any $x\in \Sigma$ we may find a sequence $(x_n)_n \to x$ such that the sequence
$(m_{x_{n}}(\graf\cV_{x_{n}}))_n$ converges to $\gamma$, where (cf. Lemma~\ref{l.supattained}) each $\cV_{x_n}$
realizes the supremum at $x_n$. Moreover, by Proposition~\ref{p.conv},
up to restricting to a subsequence we may assume that $(m_{x_{n},t})_n \to m_{x,t}$ for every $t$ in some full
$\mu^{c}$-measure set $X\subset K$. Then
\begin{equation}\label{eq.gam}
\begin{aligned}
\gamma
& = \lim_n m_{x_{n}}(\graf\cV_{x_{n}}) \\
& = \lim_n\int m_{x_{n},t}(\Hyp \cV_{x_{n}}(t)) \varrho(x_{n},t) \, d\mu^{c}(t) \\
& \leq \int \limsup_n m_{x_{n},t}(\Hyp \cV_{x_{n}}(t)) \varrho(x_{n},t) \, d\mu^{c}(t).
\end{aligned}
\end{equation}
For each fixed $t\in X$, consider a sub-sequence $(x_{n_{k}})_k$ along which the $\limsup$ is realized.
It is no restriction to suppose that $(\cV_{x_{n_{k}}}(t))_n$ converges to some $V\in\grassd$ as $k\to\infty$.
For any $\epsilon>0$, let $V_\epsilon$ be the closed $\epsilon$-neighborhood of $V$.
The fact that $\cV_{x_{n_{k}}}(t) \subset V_\epsilon$ for every large $k$ implies that
\begin{equation*}
\begin{aligned}
\limsup_k m_{x_{n_k},t}(\Hyp \cV_{x_{n_k}}(t))
\leq \limsup_k m_{x_{n_k},t}(\Hyp V_\epsilon)
\leq m_{x,t}(\Hyp V_\epsilon)
\end{aligned}
\end{equation*}
(because $V_\epsilon$ is closed). Thus, making $\epsilon \to 0$ on the right-hand side,
\begin{equation*}
\begin{aligned}
\limsup_k m_{x_{n_k},t}(\Hyp \cV_{x_{n_k}}(t))
\leq m_{x,t}(\Hyp V)
\leq m_{x,t}(\Hyp \cV_{x}(t)).
\end{aligned}
\end{equation*}
Replacing this in \eqref{eq.gam}, we find that $\gamma \leq \int m_{x,t}(\Hyp \cV_{x}(t))\varrho(x,t) \, d\mu^{c}(t)$
as claimed.
\end{proof}

Having proved Proposition~\ref{p.supmeasure}, the proofs of the following two lemmas are
analogous to those of Lemmas~5.2 and~5.3 in \cite{BoV04}, and so we omit them.

\begin{lemma}\label{p.backgamma}
Given any $x\in \Sigma$ and $\cV\in \sectd$, we have that $m_x(\graf\cV)=\gamma$
 if and only if $m_y(\graf\cF^{-1}_{\hy}(\cV))=\gamma$ for every $y\in \sigma^{-1}(x)$.
\end{lemma}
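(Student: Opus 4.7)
My approach is to use the backward invariance of $m$ expressed in Corollary~\ref{c.outro} with $k=1$, together with a direct identification of the dual graph $\graf\cF_y^{-1}\cV$ as the $F_y$-preimage of $\graf\cV$. The first step is to check the set-theoretic identity
\begin{equation*}
F_y^{-1}\bigl(\graf\cV\bigr) = \graf\bigl(\cF_y^{-1}\cV\bigr)
\qquad\text{for every } y\in\sigma^{-1}(x).
\end{equation*}
This is a direct unpacking of definitions: a pair $(s,w)\in K\times\grassl$ belongs to $F_y^{-1}(\graf\cV)$ iff $\hA(y,s)w$ meets $\cV(f_y(s))$ non-trivially, iff $w$ meets $\hA(y,s)^{-1}\cV(f_y(s))$ non-trivially, iff $w\in \Hyp\bigl(\cF_y^{-1}\cV(s)\bigr)$, since $\cF_y^{-1}\cV(s)=\hA(y,s)^{-1}\cV(f_y(s))$. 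Note that $\cF_y^{-1}\cV\in\hsec$ because $\hA(y,s)^{-1}$ is a linear isomorphism and hence sends $\grassd$ to itself.

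Second, I would use Corollary~\ref{c.outro} with $k=1$, which (after unwinding the explicit formula \eqref{eq.explicit} for $\mu^1_x$) reads
\begin{equation*}
m_x = \sum_{y\in\sigma^{-1}(x)} \frac{1}{J_\nu\sigma(y)}\,(F_y)_* m_y,
\end{equation*}
a convex combination with strictly positive weights summing to $1$ (by Lemma~\ref{l.jacobian1} and the fact that $\mu^1_q$ is a probability measure). Evaluating on $\graf\cV$ and applying the identity from the first step gives
\begin{equation*}
m_x\bigl(\graf\cV\bigr) = \sum_{y\in\sigma^{-1}(x)} \frac{1}{J_\nu\sigma(y)}\,m_y\bigl(\graf\cF_y^{-1}\cV\bigr).
\end{equation*}

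Finally, by Proposition~\ref{p.supmeasure} every term satisfies $m_y(\graf\cF_y^{-1}\cV)\le\gamma$, since $\cF_y^{-1}\cV\in\hsec$. Because the weights are positive and sum to one, the weighted mean equals $\gamma$ if and only if each individual summand equals $\gamma$. This yields both implications of the equivalence simultaneously. I do not anticipate a substantive obstacle here: the argument is purely algebraic once the identity $F_y^{-1}(\graf\cV)=\graf\cF_y^{-1}\cV$ is established, and that identity is a routine consequence of the definitions of $\Hyp$ and $\cF_y$. The one point that requires a bit of care is verifying that the weights $1/J_\nu\sigma(y)$ actually sum to $1$, which follows from Lemma~\ref{l.jacobian1} together with the observation that $\mu^1_q$ is a probability measure independent of the $K$-coordinate of $q$.
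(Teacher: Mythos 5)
Your proposal is correct and follows essentially the same route as the paper: the paper's proof likewise writes $m_x=\sum_{\sigma(y)=x}J\sigma(y)^{-1}(F_y)_*m_y$ (the $F$-invariance extended to every $x$ by continuity of the disintegration, i.e. exactly Corollary~\ref{c.outro} with $k=1$), identifies $(F_y)_*m_y(\graf\cV)=m_y(\graf\cF_y^{-1}\cV)$, and concludes from the fact that $\gamma$ bounds every term of a convex combination with weights summing to one. Your explicit verification of the graph identity and of the weights summing to $1$ only makes explicit what the paper leaves implicit.
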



As introduced in Section~\ref{ss.jacobians2}, let $\{\hmuSigma_x: x\in \Sigma\}$ be the disintegration
of $\hmuSigma=(\pi_1)_* \hmu$ with respect to the partition into stable sets $\{\Sigma^-\times \{x\}:x\in \Sigma\}$.
Observe that every $\hmuSigma_x$ is equivalent to $\mu^s$.

\begin{lemma}\label{l.almostgamma}
For any $x\in \Sigma$ and any $\cV\in \sectd$ we have that
$$
\hm_{\hx}(\graf\cW)\leq \gamma
\text{ for $\hmuSigma_x$ almost every $\hx\in W^s_{\loc}(x)$.}
$$
Hence, $m_{x}\left(\graf\cW\right)=\gamma$ if and only if $\hm_{\hx}(\graf\cW)=\gamma$
for $\hmuSigma_x$-almost every $\hx\in W^s_{\loc}(x)$.
\end{lemma}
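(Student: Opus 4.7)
The proof splits into the pointwise bound $\hm_{\hx}(\graf\cW)\leq\gamma$ (the real content) and the ``hence'' equivalence, which is an immediate consequence of a general fact: a bounded measurable function that is $\leq\gamma$ almost everywhere has integral equal to $\gamma$ against a probability measure if and only if it equals $\gamma$ almost everywhere. So the plan is to concentrate on proving the pointwise bound.

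The starting point is the disintegration identity $m_x=\int_{W^s_{loc}(x)}\hm_{\hx}\,d\hmuSigma_x(\hx)$, which follows from the product structure of $\hmu$ and the identification of the fibers of $P$ with the local stable sets of $\hsigma$. Applied to the (fixed) Borel set $\graf\cW\subset K\times\grassl$, this gives the average bound
\[
\int_{W^s_{loc}(x)}\hm_{\hx}(\graf\cW)\,d\hmuSigma_x(\hx)=m_x(\graf\cW)\leq\gamma,
\]
so the task is to upgrade this integrated estimate to an a.e.\ pointwise one. The key tool for the upgrade is Corollary~\ref{c.invcont2}, which says that for $\hmuSigma_x$-a.e.\ $\hx\in W^s_{loc}(x)$ we have $\hm_{\hx}(\graf\cW)=\hm_{\hsigma^n(\hx)}(\graf\cF^n_x\cW)$ for every $n\geq 1$. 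Setting $\phi_n(\hy):=\hm_{\hy}(\graf\cF^n_x\cW)$ and using Lemma~\ref{l.todos1} for the pushforward of $\hmuSigma_x$ under $\hsigma^n$, the problem reduces to showing that $\phi_n\leq\gamma$ at $\hmuSigma_{\sigma^n(x)}$-a.e.\ $\hy\in W^s_{loc}(\sigma^n(x))$.

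To prove this, observe that every stable cylinder $C_m\subset W^s_{loc}(\sigma^n(x))$ of length $m$ is of the form $\hsigma^m(W^s_{loc}(z))$ for the appropriate preimage $z\in\sigma^{-m}(\sigma^n(x))$, and apply Corollary~\ref{c.invcont2} once more at base $z$ to rewrite $\phi_n\circ\hsigma^m=\hm_{\hsigma^m(\cdot)}(\graf\cF^n_x\cW)$ as $\hm_{\cdot}(\graf\cW')$ on $W^s_{loc}(z)$, where $\cW':=\cF^{-m}_z\circ\cF^n_x\cW\in\hsec$. Combining the change of variables formula with the Jacobian identity of Lemma~\ref{l.todos1}, a direct computation shows that the $\hmuSigma_{\sigma^n(x)}$-conditional expectation of $\phi_n$ on the cylinder $C_m$ equals $m_z(\graf\cW')$, which is $\leq\gamma$ by the very definition of $\gamma$. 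Since the $\sigma$-algebra generated by stable cylinders coincides with the Borel $\sigma$-algebra of $W^s_{loc}(\sigma^n(x))$, the increasing-filtration martingale convergence theorem guarantees that these conditional expectations converge $\hmuSigma_{\sigma^n(x)}$-a.e.\ to $\phi_n$; since each is bounded by $\gamma$, so is the limit. Pulling back via $\hsigma^{-n}$ concludes the proof.

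The main obstacle is the last chain of reductions: verifying that the cylinder-wise bound by $m_z(\graf\cW')$ is valid \emph{for every} stable cylinder (not only those of the form $\hsigma^n(W^s_{loc}(x))$), and that the various cocycle-transported sections $\cW'$ belong to $\hsec$ so that the definition of $\gamma$ applies to them. Once the bound holds on the full refining family of cylinders, the martingale/Lebesgue differentiation step is standard and delivers the required pointwise inequality.
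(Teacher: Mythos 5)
Your proof is correct and is essentially the paper's argument in a different packaging: the paper supposes the bound fails on a positive-measure set, approximates that set by a stable cylinder of relative density close to $1$, and transports back by $\hsigma^{-m}$ using Corollary~\ref{c.invcont2} and Lemma~\ref{l.todos1} to contradict the definition of $\gamma$ --- which is exactly the density/differentiation step you phrase positively as ``every cylinder average of $\hx\mapsto\hm_{\hx}(\graf\cW)$ equals some $m_z(\graf\cW')\le\gamma$, hence the a.e.\ limit is $\le\gamma$ by martingale convergence.'' The only remark is that your preliminary reduction to $\phi_n$ on $W^s_{loc}(\sigma^n(x))$ is superfluous, since your cylinder computation applies verbatim at the original base point $x$ with the section $\cW$ itself.
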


\subsection{Sections over a periodic point}\label{ss.sections}

Let $\hp$ be a fixed (or periodic) point of $\hsigma$ and $\hz$ be a homoclinic point associated to $\hp$.
More precisely, we fix $\hz$ and $\imath \ge 1$ such that $\hz \in W^u_{\loc}(\hp)$ and
$\hsigma^{\imath}(\hz) \in W^s_{\loc}(\hp)$. Denote $p=P(\hp)$ and $z=P(\hz)$.

By the pinching hypothesis in Section~\ref{ss.pinching_twisting}, the Oseledets decomposition
of $F$ restricted to $K$ has the form $E^1(t) \oplus \cdots \oplus E^d(t)$, at $\mu^c$-almost
every $t\in K$, with $\dim E^i(t)=1$ for every $i$. Fix a measurable family
$e^1(t), \dots, e^d(t)$ of bases of $\field^d$ with $e^i(t)\in E^i(t)$ for every $i$.
The matrices of the iterates $A^j(p,t)$ relative to these bases are diagonal:
$$
A^j(p,t) = \left(\begin{array}{cccc}
a^{1,j}(t) & 0 &  \vdots & 0 \\
0 & a^{2,j}(t) & \vdots & 0 \\
\cdots & \cdots & \ddots & \cdots \\
0 & 0 & \vdots & a^{d,j}(t)
\end{array}\right).
$$
We are going to use the associated linear bases of $\exteriord$ and $\exteriorl$,
defined at $\mu^c$-almost every $t\in K$ by
\begin{equation}\label{eq.based-l}
\lbrace e^I(t)=e^{i_{1}}(t) \wedge \dots \wedge e^{i_{d-l}}(t)
\text{, for }I=\lbrace i_1< \cdots < i_{d-l}\rbrace \rbrace
\end{equation}
and
\begin{equation}\label{eq.basel}
\lbrace e^J(t)=e^{j_{1}}(t) \wedge \cdots \wedge e^{j_{l}}(t)\text{, for }J=\lbrace j_1 < \cdots < j_{l} \rbrace \rbrace
\end{equation}
respectively.


By Lemma~\ref{l.supattained} and Proposition~\ref{p.supmeasure}, we may choose $\cV^{0}\in \sectd$
such that $m_{p}\left(\graf\,\cV^{0}\right)=\gamma$. Define $\cV^j={\cF^{-j}_{p}}\cV^{0}$ for $j \ge 1$.
By Proposition~\ref{p.backgamma}, we also have $m_{p}\left(\graf\,\cV^j\right)=\gamma$ for every $j \ge 1$.
Let $V^0: K \to \exteriord$ be a representative of $\cV^0\in\sectd$, in the sense that $\cV^0(t)$
is the projective class of $V^0(t)$ for each $t$. Then denote $V^j = \cF^{-j}_{p} V^0$ for each $j \ge 1$.
Expressing $V^0$ in terms of the linear bases \eqref{eq.based-l} of $\exteriord$,
\begin{equation*}
V^0(t)=\sum_{I=i_{1},\dots,i_{d-l}} v_{I}(t) e^I(t),
\end{equation*}
we find that
\begin{equation*}
V^j(t)=\sum_{I=i_{1},\dots,i_{d-l}} \frac{v_{I}(f^{j}_{p}(t))}{a^{I,j}(t)} e^I(t),
\end{equation*}
with $a^{I,j}(t) = a^{i_1,j}(t) \cdots a^{i_{d-l},j}(t)$.
Note that $\lim_j (1/j) \log|a^{i,j}|= \lambda_i$, and so
\begin{equation}\label{eq.agrowth}
\lim_j \frac 1j \log|a^{I,j}| = \lambda_{i_1} + \dots + \lambda_{i_{d-l}}.
\end{equation}
Order the multi-indices
$$
I=\{i_1 < \cdots < i_{d-l}\}
$$
in such a way that the sums $\lambda_{i_1} + \cdots + \lambda_{i_{d-l}}$ are in increasing order
(by the pinching condition these sums are all distinct).

Let $\tilde{I}=\{\tilde{i}_1,\cdots,\tilde{i}_{d-l}\}$ be the first multi-index, in this ordering,
for which $v_{\tilde{I}}$ is not essentially zero.
In what follows we assume that $f_{p}$ is ergodic for $\mu^c$.
Then $\tilde{I}$ is the same for every $t\in K$ in a full $\mu^c$-measure set.
The non-ergodic case can be reduced to this one by ergodic decomposition.

\begin{lemma}\label{l.semperda}
The section $t\mapsto E^{\tilde I}(t)$ satisfies $m_{p}\left(\graf\,E^{\tilde I}\right)=\gamma$.
\end{lemma}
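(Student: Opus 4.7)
The plan is to exhibit pointwise subsequential convergence of the sections $\cV^j$ to $E^{\tilde I}$, and then to exploit the fact, already implicit in the proof of Lemma~\ref{l.supattained}, that the set of pointwise maximizers is closed in $\grassd$. More precisely, introduce $\Gamma(t) = \{V \in \grassd : m_{p,t}(\Hyp V) = g(t)\}$; by upper semi-continuity of $V \mapsto m_{p,t}(\Hyp V)$ (noted in Lemma~\ref{l.gG} and used in Proposition~\ref{p.supmeasure}) the set $\Gamma(t)$ is closed, and by Lemma~\ref{l.supattained} a section $\cW \in \hsec$ satisfies $m_p(\graf\cW) = \gamma$ if and only if $\cW(t) \in \Gamma(t)$ for $\mu^c$-a.e.\ $t$. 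Combining this with Proposition~\ref{p.backgamma}, which yields $m_p(\graf\cV^j) = \gamma$ for every $j \ge 0$, I obtain $\cV^j(t) \in \Gamma(t)$ for $\mu^c$-a.e.\ $t$, simultaneously for all $j$ (countable intersection of full-measure sets).

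The heart of the matter is to show that for $\mu^c$-a.e.\ $t$ there exists a (possibly $t$-dependent) subsequence $(j_n)$ with $\cV^{j_n}(t) \to E^{\tilde I}(t)$ in $\grassd$. Since $v_I \equiv 0$ for $I < \tilde I$, the representative is
\[
V^j(t) = \sum_{I \ge \tilde I} \frac{v_I(f^j_p(t))}{a^{I,j}(t)} \, e^I(t).
\]
Formally normalizing by the $\tilde I$-coefficient whenever $v_{\tilde I}(f^j_p(t)) \ne 0$, the projective class of $V^j(t)$ coincides with that of $e^{\tilde I}(t) + \sum_{I > \tilde I} r^j_I(t)\, e^I(t)$, where
\[
r^j_I(t) = \frac{v_I(f^j_p(t))}{v_{\tilde I}(f^j_p(t))} \cdot \frac{a^{\tilde I,j}(t)}{a^{I,j}(t)}.
\]
I would then pick $c>0$ small and $M>0$ large so that
\[
S = \{s \in K : |v_{\tilde I}(s)| \ge c \text{ and } |v_I(s)| \le M \text{ for every } I > \tilde I\}
\]
has positive $\mu^c$-measure; this is possible because $v_{\tilde I}$ is not essentially zero and each $v_I$ is $\mu^c$-a.e.\ finite. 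By ergodicity of $f_p$, for $\mu^c$-a.e.\ $t$ the orbit $(f^j_p(t))_{j\ge 0}$ visits $S$ with positive density, so along some subsequence $(j_n)=(j_n(t))$ one has $f^{j_n}_p(t) \in S$; then $|(v_I/v_{\tilde I})(f^{j_n}_p(t))| \le M/c$. On the other hand, the pinching hypothesis gives $\lambda_{\tilde I} < \lambda_I$ for $I > \tilde I$, so \eqref{eq.agrowth} provides $|a^{\tilde I,j}(t)/a^{I,j}(t)| \le e^{-j c_0}$ for $j$ large and some $c_0>0$. Hence $r^{j_n}_I(t) \to 0$ for every $I > \tilde I$, and consequently $\cV^{j_n}(t) \to E^{\tilde I}(t)$.

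Putting the two ingredients together, for $\mu^c$-a.e.\ $t$ we have $\cV^{j_n}(t) \in \Gamma(t)$ and $\cV^{j_n}(t) \to E^{\tilde I}(t)$; since $\Gamma(t)$ is closed, $E^{\tilde I}(t) \in \Gamma(t)$. Invoking Lemma~\ref{l.supattained} once more, the section $t \mapsto E^{\tilde I}(t)$ realizes the supremum, which gives $m_p(\graf E^{\tilde I}) = \gamma$.

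The main obstacle is the lack of a uniform bound on the ratio $v_I/v_{\tilde I}$: the denominator can be arbitrarily small on the complement of $S$. Poincaré recurrence into $S$ resolves this, ensuring that along the relevant subsequence the numerator and denominator stay of comparable size so that the exponential Oseledets gap $\lambda_I - \lambda_{\tilde I} > 0$ provided by pinching is enough to drive $r^{j_n}_I$ to zero.
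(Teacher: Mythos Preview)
Your proof is correct and follows essentially the same route as the paper: establish that for $\mu^c$-a.e.\ $t$ some subsequence $\cV^{j_n}(t)$ converges to $E^{\tilde I}(t)$, and then use that the set $\Gamma(t)$ of pointwise maximizers is closed (via upper semi-continuity) together with Lemma~\ref{l.supattained}. The only cosmetic difference is that the paper invokes the Birkhoff theorem for $|v_{\tilde I}|$ to bound the denominator from below along a subsequence, whereas you use recurrence to a set $S$ where both $|v_{\tilde I}|\ge c$ and $|v_I|\le M$; your version is in fact slightly more explicit about controlling the numerators $v_I$, $I>\tilde I$.
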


\begin{proof}
 By the Birkhoff ergodic theorem,
\begin{equation*}
\lim_n \frac{1}{n}\sum^{n-1}_{j=0}\vert v_{\tilde{I}}  \left(f^{j}_p(t)\right)\vert
= \int \vert v_{\tilde{I}}\vert d\mu^{c}_{p},
\end{equation*}
for $\mu^c_p$-almost every $t\in K$. So, there exist some $\delta>0$ such that
$$
\lim_n \frac{1}{n}\sum^{n-1}_{j=0}\vert v_{\tilde{I}}  \left(f^{j}_p(t)\right)\vert>\delta>0
$$
for every $t$ in some full $\mu^c_p$-measure set. For any $t$ in that set we may consider a
sub-sequence $(j_k)_k$ such that $\vert v_{\tilde{I}}(f_p^{j_{k}})\vert>\delta>0$.
Then
\begin{equation*}
\lim_k \frac{1}{\norm{V^{j_{k}}(t)}}V^{j_{k}}(t)= e^{\tilde{I}}(t),
\end{equation*}
and so
\begin{equation*}
\lim_k \cV^{j_{k}}(t)= E^{\tilde{i}_{1}}(t)+ E^{\tilde{i}_{2}}(t)+ \dots + E^{\tilde{i}_{d-l}}(t)=E^{\tilde{I}}(t)
\end{equation*}
for $\mu^c$-almost every $t$.
We also have that $m_{p,t}\left(\cV^{j}(t)\right)=\sup_{V}m_{p,t}\left(V\right)$,
and then Lemma~\ref{l.supattained} implies that $m_{p,t}(E^{\tilde{I}}(t))=\sup_{V}m_{p,t}\left(V\right)$
for $\mu^c$-almost every $t$, as we claimed.
\end{proof}

This means that, from the start, we may take $V^0(t)$ to coincide with one of the invariant sections
$E^{\tilde I}_t$ given by the Oseledets decomposition, for $\mu^c_p$-almost every $t\in K$.
Define $\cV'=\cF^{-\imath}_z \cV^{0}$. We have that $m_{z}\left(\graf\,\cV'\right)=\gamma$ and,
by Lemma~\ref{l.almostgamma}, $\hm_{(z^u,z)}\left(\graf\,\cV'\right)=\gamma$ for $\mu^{s}$-almost
all $(z^u,z) \in W^{s}_{\loc}(\hz)$. For each $(x^s,p)\in W^{s}_{\loc}(\hp)$,
define $\cV_{(x^s,p)}={\cH^{u}_{(x^s,z),(x^s,p)}}\left(\cV'\right)$,
where $(x^s,z)$ is the unique point in $W^{u}_{\loc}((x^s,p))\cap W^{s}_{\loc}(\hz)$.
Since $\hm$ is a $u$-state, and ${h^{u}_{\hz,\hp}}_{\ast}\mu^c_{\hz}=\mu^c_{\hp}$,
this implies that
\begin{equation}\label{eq.almostall}
\hm_{(x^s,p)}(\cV_{(x^s,p)})=\gamma
\quad\text{for $\mu^s$-almost every $(x^s,p)\in W^{s}_{\loc}(p)$.}
\end{equation}
Denote $\cV^{j}_{(x^s,p)}=\cF^{-j}_{(x^s,p)} \cV_{\hsigma^j((x^s,p))}$ for each $(x^s,p)$
and $j\ge 1$. In particular, $\cV^{j}_{\hp}=\cF^{-j}_{\hp} \cV_{\hp}$.
We are going to prove that for a large set of $j$s the $\cV^j_{\hp}$ have no intersection.

\begin{proposition}\label{p.intersection}
There exists $N\ge 1$ such that for every $M\in \natural$ and $\delta>0$ there exist
$m_1<m_2<\dots <m_M$ and $\tilde{K}\subset K$, with $\mu(\tilde{K})>1-\delta$ and
$\cV^{m_{k_1}}_{\hp}(t)\cap \cdots \cap \cV^{m_{k_N}}_{\hp}(t) = \emptyset$ for any choice
of $m_{k_1}<m_{k_2}<\dots<m_{k_N}$ and $t\in\tilde{K}$.
\end{proposition}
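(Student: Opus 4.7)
The plan is to expand $\cV^{j}_{\hp}(t)$ in the Oseledets basis of $(d-l)$-vectors, use pinching and twisting to show that its Plücker coefficients have $\binom{d}{d-l}$ pairwise distinct exponential growth rates, and then extract from the sequence a finite family in sufficiently general position that the intersection of any $N$ of them is trivial.

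Since $\cV_{\hp}=\cH^{u}_{\hz,\hp}\circ \cF^{-\imath}_{\hz}\circ \cH^{s}_{\hp,\hsigma^{\imath}(\hz)} E^{\tilde I}$ is the image of $e^{\tilde I}$ under linear operations on $\exteriord$, its expansion in the basis $\{e^{J}(t):|J|=d-l\}$ has coefficients equal (up to uniform factors) to the minors $m_{\tilde I,J}(t)$ of the twisting matrix $B(t)$ of Section~\ref{ss.pinching_twisting}. Iterating backwards by $\cF_{\hp}$ multiplies the $J$-th coefficient by $1/a^{J,j}(t)$ and evaluates the minor at $f^{j}_{p}(t)$, yielding
\begin{equation*}
\cV^{j}_{\hp}(t)=\Big[\sum_{|J|=d-l}\frac{m_{\tilde I,J}(f^{j}_{p}(t))}{a^{J,j}(t)}\,e^{J}(t)\Big]
\end{equation*}
viewed as a point of $\grassd\subset \projective\exteriord$ through the Plücker embedding. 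Pinching gives $\tfrac{1}{j}\log|a^{J,j}(t)|\to \lambda_{J}:=\sum_{i\in J}\lambda_{i}$ with the $\lambda_{J}$ pairwise distinct, and twisting \eqref{eq.suffar} gives $\tfrac{1}{j}\log|m_{\tilde I,J}(f^{j}_{p}(t))|\to 0$ for $\mu^{c}_{p}$-almost every $t$.

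Set $N=\lceil d/l\rceil$, so that any $N$ subspaces of $\grassd$ in general linear position have trivial intersection. Fix $M$ and $\delta>0$. Apply Egorov to the two convergences above to obtain $\tilde K\subset K$ with $\mu^{c}_{p}(\tilde K)>1-\delta$ on which both convergences are uniform. Fix $\epsilon<\min_{J\ne J'}|\lambda_{J}-\lambda_{J'}|$. After a further refinement of $\tilde K$, for every sufficiently large $j$ and every $t\in\tilde K$ one has $|m_{\tilde I,J}(f^{j}_{p}(t))|\geq e^{-\epsilon j}$ for all $J$. Select $m_{1}<\dots<m_{M}$ from these times, with consecutive gaps $m_{k+1}-m_{k}$ much larger than $\epsilon^{-1}$; this forces the Plücker vectors $W_{m_{k}}(t)$ to occupy pairwise distinct asymptotic scales across the basis $\{e^{J}(t)\}$, uniformly in $t\in\tilde K$.

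The final step, which I expect to be the main obstacle, is to translate this scale separation into a general $N$-position statement. Arguing by contradiction, if a non-zero $v\in \complex^{d}$ lay in $N$ of the $\cV^{m_{k_{i}}}(t)$, the relations $v\wedge W_{m_{k_{i}}}(t)=0$ give $N$ linear identities among coefficients of $N$ pairwise distinct logarithmic orders; the sharp scale separation together with the uniform lower bounds $e^{-\epsilon m_{k_i}}$ on the minors makes these relations inconsistent for $N\geq \lceil d/l\rceil$ once the $m_{k_{i}}$ are sufficiently sparse. The cleanest way to package this uniformly in $t\in\tilde K$ is probably via a Schubert decomposition of $\grassd$ adapted to the Oseledets flag generated by the $e^{J}(t)$'s: the twisting condition rules out exactly the degenerate strata in which $N$ subspaces can share a common vector, and the remaining linear-algebraic count fixes $N$ once and for all in terms of $d$ and $l$.
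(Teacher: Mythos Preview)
Your setup --- expanding $\cV^{j}_{\hp}(t)$ in the Oseledets basis of $\exteriord$ and using pinching/twisting to control the exponential rates of the Pl\"ucker coefficients --- matches the paper exactly. The divergence begins with your choice $N=\lceil d/l\rceil$. The paper takes the much larger $N=\dim\exteriorl=\binom{d}{l}$, and this is not an accident: with that $N$, the condition ``some nonzero $W\in\exteriorl$ satisfies $W\wedge V^{m_{k_i}}(t)=0$ for $i=1,\dots,N$'' becomes a square $N\times N$ linear system in the coefficients of $W$, and the whole problem reduces to showing that a single determinant is nonzero. The paper then proves this via an explicit Vandermonde-type induction (their Lemma~\ref{l.nonzerodet} and Lemma~\ref{l.vandermondelike}): one successively row-reduces the matrix of coefficients $b^{n}_{I}(t)=v_{I}(f^{n}(t))/a^{I,n}(t)$, using at each step that the distinct exponential rates $-\lambda_{I}$ force the pivots to be nonzero on a large set, provided the times $m_{k}$ are chosen recursively far enough apart. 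This is the clean linear-algebraic core you are missing.

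Your attempt to get away with the optimal $N=\lceil d/l\rceil$ has a genuine gap in the final step. Knowing that the Pl\"ucker vectors $W_{m_{k}}(t)$ have well-separated scales in the $e^{J}(t)$ basis does \emph{not} by itself imply that the corresponding $(d-l)$-subspaces are in general linear position in $\complex^{d}$. The relation ``$v\in\cV^{m_{k_i}}(t)$'' is $l$ linear conditions on $v\in\complex^{d}$, so $N$ such memberships give $Nl$ conditions in $d$ unknowns; you need these $Nl$ conditions to be linearly independent, and scale separation of Pl\"ucker coordinates does not obviously deliver that --- the Pl\"ucker coordinates encode the subspace nonlinearly, and two subspaces with wildly different Pl\"ucker scales can still share a direction. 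Your appeal to a Schubert decomposition is a reasonable instinct, but twisting only tells you that the \emph{initial} section $\cV_{\hp}$ avoids the Oseledets flag strata; translating this into a uniform-in-$t$ general-position statement for \emph{arbitrary $N$-tuples} of iterates is exactly the hard part, and you have not carried it out. The paper sidesteps this entirely by inflating $N$ to $\binom{d}{l}$ so that the question becomes a single determinant, and then attacking that determinant directly.
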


\begin{proof}
Let $V_{\hp}:K\to \exteriord$ be such that $V_{\hp}(t)$ is a unitary $d-l$ vector that
represents $\cV_{\hp}(t)$ in $\exteriord$. We can write it as
\begin{equation*}
V_{\hp}(t)=\sum_{I}v_{I}(t)e^{I}_{\hp}(t).
\end{equation*}
Then
\begin{equation*}
\cF^{-j}_{\hp}V_{\hp}(t)
= {A^{j}(\hp,t)}^{-1}V_{\hp}\left(f^{j}_{p}(t)\right)
= \sum_{I}\frac{v_{I}\left(f^{j}_{p}(t)\right)}{a^{I,j}(t)}e^I_{\hp}(t)
\end{equation*}
Let $N= \dim \exteriorl$. Given any $m_1<m_2<\cdots<m_N$ and $t\in K$ such that
$\cV^{m_1}_{\hp}(t)\cap \cdots \cap \cV^{m_N}_{\hp}(t) \neq \emptyset$,
there is some non-zero $W(t) \in \exteriorl$ such that
\begin{equation}\label{eq.intersectioncond}
W(t) \wedge {\cF^{-m_{k}}_{\hp}}V_{\hp}(t)=0.
\end{equation}
Write
$$
W(t)=\sum_{I}\omega_{I}(t)e^{J}_{\hp}(t)\quad \text{where }
J=\lbrace 1,2,\dots , l \rbrace \setminus I.
$$
Then \eqref{eq.intersectioncond} can be written as
\begin{equation*}
\sum_{I}\frac{v_{I}\left(f^{m_{k}}(t)\right)}{a^{J,m_{k}}(t)}\omega_{I}(t)\varpi_{I}=0,
\end{equation*}
for every $1\leq k\leq N$, where $\varpi_{I}=\pm 1$ is the sign of
$e^{i_{1}}_{\hp}(t)\wedge \dots \wedge e^{i_{d-l}}_{\hp}(t)\wedge e^{j_{1}}_{\hp}(t)\wedge \dots \wedge e^{j_{d-l}}_{\hp}(t)$.
This may be written as
\begin{equation}\label{eq.Lineareq}
B(t) x=0
\end{equation}
where
\begin{equation}\label{eq.sequations}
 B(t)=\left( \begin{array}{ccc}
\frac{v_{I_{1}}(f^{m_{1}}(t))}{a^{I_{1},m_{1}}(t)} & \dots & \frac{v_{I_{N}}(f^{m_{1}}(t))}{a^{I_{N},m_{1}}(t)} \\
\vdots & \vdots & \vdots \\
\frac{v_{I_{1}}(f^{m_{N}}(t))}{a^{I_1,m_{N}}(t)}  & \dots & \frac{v_{I_{N}}\left(f^{m_{N}}(t)\right)}{a^{I_{N},m_{N}}(t)}
\end{array} \right)
\end{equation}
\begin{equation*}
\text{ and } x=\left(\varpi_{I_{1}}\omega_{I_{1}},\dots ,\varpi_{I_{N}}\omega_{I_{N}}\right)^{T}.
\end{equation*}
So, in order to prove that the intersection is necessarily empty, it suffices to show that \eqref{eq.Lineareq}
has no non-zero solutions, in other words, that $\det B(t) \neq 0$.
We are going to use the following fact:

\begin{lemma}\label{l.nonzerodet}
Let $b^{n}_i:K\to \field$, for $1\leq i \leq d$ and $n\in \natural$,
be measurable functions and suppose there exist $\chi_1<\chi_2<\dots<\chi_d $
such that
\begin{equation}\label{eq.bj}
\lim_n \frac{1}{n}\log\vert b^{n}_i(t)\vert=\chi_i
\text{ for $\mu^c$-almost every $t$.}
\end{equation}
Then for every $M\in \natural$ and $\delta>0$ there exist $n_1<n_2<\dots <n_M$ and
$\tilde{K}\subset K$ with $\mu(\tilde{K})>1-\delta$, such that for any choice of
a set $\{k_1, \cdots, k_d\} \subset \{1, \dots, M\}$ with $k_1 < \dots < k_d$,
the matrix
$$
B(t)\in \field^{d\times d},
\quad B_{i,j}(t) = b^{n_{k_j}}_i(t),
$$
has non-zero determinant for every $t\in \tilde{K}$
\end{lemma}

For the proof we need the following simple algebraic fact:

\begin{lemma}\label{l.vandermondelike}
Let $C=(c^{j}_i)_{1 \le i, j \le d}$ be a square matrix with $c^{1}_i \neq 0$
for every $i=1, \dots, d$. Then
$$
\det C = \prod_{i=1}^d c^{1}_i \cdot \det E
$$
where $E = (e^j_i)_{2 \le i, j \le d}$ is defined by
\begin{equation}\label{eq.recorrencia}
e^j_i = \frac{c^j_i}{c^1_i} - \frac{c^j_1}{c^1_1}.
\end{equation}
\end{lemma}

\begin{proof}
The assumption ensures that we may write
\begin{equation*}
\det C= c^1_1 \dots c^1_d
\left[ \begin{array}{cccc}
1 & 1 & \dots & 1\\
\frac{c^2_1}{c^1_1} & \frac{c^2_2}{c^1_2} & \dots & \frac{c^2_d}{c^1_d} \\
\vdots & \vdots & \ddots & \vdots \\
\frac{c^d_1}{c^1_1}& \frac{c^d_2}{c^1_2}& \dots & \frac{c^d_d}{c^1_d}\end{array} \right].
\end{equation*}
Subtracting the first column from each one of the others, we end up with
\begin{equation*}
\det C=c^1_1 \dots c^1_d
\left[ \begin{array}{ccc}
e^2_2 &\dots & e^2_d\\
\vdots & \ddots & \vdots \\
e^d_2 &\dots & e^d_d\end{array} \right],
\end{equation*}
as claimed.
\end{proof}

\begin{proof}[Proof of Lemma~\ref{l.nonzerodet}]
Let us write $b^{1,n}_i = b^n_i$ for $i =1, \dots, d$ and $n\ge 1$. The hypothesis \eqref{eq.bj} implies
that there exist $n_1\ge 1$ and $K_1\subset K$ with $\mu(K_1) > 1- \delta/M$ such that
\begin{equation}\label{eq.nonzero1}
b^{1,n}_i(t) \neq 0
\text{ for $n \ge n_1$, $t\in K_1$ and $i=1, \dots, d$.}
\end{equation}
Let $n_1$ be fixed and define (compare \eqref{eq.recorrencia})
\begin{equation}\label{eq.recorrencia1}
b^{2,n,n_1}_i(t)
= \frac{b^{1,n}_i(t)}{b^{1,n_1}_i(t)} - \frac{b^{1,n}_1(t)}{b^{1,n_1}_1(t)}
\text{ for $i =2, \dots, d$ and $n > n_1$.}
\end{equation}
From \eqref{eq.bj}, and the observation that $\chi_i > \chi_1$, we get that
\begin{equation}\label{eq.bj2}
\lim_n \frac{1}{n} \log |b^{2,n,n_1}_i(t)| = \lim_n \frac{1}{n} \log |b^{1,n}_i(t)| = \chi_i.
\end{equation}
In particular, there exists $n_2 > n_1$ and $K_2 \subset K_1$ with $\mu(K_2) > 1- 2\delta/M$ such
that
\begin{equation}\label{eq.nonzero2}
b^{2,n,n_1}_i(t) \neq 0 \text{ and } b^{1,n}_i(t) \neq 0
\text{ for $n \ge n_2$, $t\in K_2$ and $i=2, \dots, d$}
\end{equation}
(the second condition follows immediately from \eqref{eq.nonzero1} and the fact that $n_2>n_1$,
but we mention it explicitly, for consistency with what follows).

Next, proceed by induction on $l\leq M$:
Suppose that we have defined an increasing sequence of numbers $n_1<\dots< n_l$, a decreasing
sequence of sets $K_l\supset \cdots \supset K_1$ with $\mu(K_l) > 1 - l\delta/M$, and
for every $1\leq j\leq \max\lbrace l,d\rbrace$
a family of measurable functions $b^{j,n_{k_j},\dots,n_{k_1}}_i:K_l\to \field\setminus \lbrace 0 \rbrace$ with
$1\leq k_1<\cdots <k_j \leq l$ satisfying the following relation:
\begin{equation}\label{eq.recorrencia2}
b^{j,n_{k_j},\dots,n_{k_1}}_i(t)=
\frac{b^{j-1,n_{k_j},n_{k_{j-2}},\dots,n_{k_1}}_i(t)}{b^{j-1, n_{k_{j-1}},\dots,n_{k_1}}_i(t)}
-\frac{b^{j-1,n_{k_j},n_{k_{j-2}},\dots,n_{k_1}}_{j-1}(t)}{b^{j-1,n_{k_{j-1}},\dots,n_{k_1}}_{j-1}(t)},
\end{equation}
for $i=j,\dots,d$ and $t\in K_l$.

Suppose $l<M$. Fix, $1<j\leq \max\lbrace l+1,d \rbrace$ and $1\leq k_1<\cdots <k_j \leq l$,
define the functions $b^{j+1,n,n_{k_j},\dots,n_{k_1}}_i:K_l\to \field$
inductively by
\begin{equation}\label{eq.recorrencia3}
b^{j+1,n,n_{k_j},\dots,n_{k_1}}_i(t)=
\frac{b^{j,n,n_{k_{j-1}},n_{k_{j-2}},\dots,n_{k_1}}_i(t)}{b^{j, n_{k_{j}},\dots,n_{k_1}}_i(t)}
-\frac{b^{j,n,n_{k_{j-1}},n_{k_{j-2}},\dots,n_{k_1}}_j(t)}{b^{j,n_{k_{j}},\dots,n_{k_1}}_j(t)}
\end{equation}
for $i=j+1,\dots,d$, $t\in K_l$ and $n\ge n_l$.
Then, arguing as in \eqref{eq.bj2} and using induction on $j$,
\begin{equation}\label{eq.bjk}
\lim_n \frac{1}{n}\log |b^{j+1,n,n_{k_j},\dots,n_{k_1}}_i(t)|=\chi_i.
\end{equation}
Hence we can find $n_{l+1}>n_l$ and $K_{l+1}\subset K_l$ with
$$
\mu(K_{l+1})>1-(l+1)\delta /M
$$
such that, for every $1<j\leq \max\{l+1,d\}$ and $1\leq k_1<\cdots <k_j \leq l$,
$$
b^{j+1,n_{l+1},n_{k_j},\dots,n_{k_1}}_i(t)\neq 0
\text{ for $i=j+1,\dots,d$ and $t\in K_{l+1}$.}
$$

Now fix $\{k_1, \cdots, k_d\} \subset \{1, \dots, M\}$ with $k_1 < \dots < k_d$, and define for $t\in K_M$
the matrix
$$
B(t)\in \field^{d\times d},
\quad B_{i,j}(t) = b^{n_{k_j}}_i(t).
$$
Then, in view of the recursive relations \eqref{eq.recorrencia2}--\eqref{eq.recorrencia3},
we may apply Lemma~\ref{l.vandermondelike} $d$-times to $C=B(t)$ to conclude that
\begin{equation*}
\det B(t)=\prod_{i=1}^d b^{1,n_{k_1}}_i(t)
\prod_{i=2}^d b^{2,n_{k_2},n_{k_1}}_i(t)
\cdots
\prod_{i=d}^d b^{d,n_{k_d},\dots,n_{k_1}}_i(t).
\end{equation*}
This completes our argument.

\end{proof}

Let us go back to proving Proposition~\ref{p.intersection}.
The twisting condition (Section~\ref{ss.pinching_twisting}), implies that
$$
\lim_n \frac 1n \log| v_{I}(f^{n}(t))|=0\text{ for $\mu^c$-almost every $t\in K$}
$$
 and $I=\lbrace i_1<\cdots<i_{d-l}\rbrace$. Then, by \eqref{eq.agrowth},
$$
\lim_n \frac 1n \log \frac{|v_I(f^{n}(t))|}{|a^{I,n}(t)|}
= -(\lambda_{i_1}+\cdots +\lambda_{i_{d-l}})\text{ for $\mu^c$-almost every $t\in K$}.
$$
The pinching condition ensures that these sums are all distinct. Then we may apply Lemma~\ref{l.nonzerodet} to the
functions
$$
b_i^n(t)=\frac{v_{I_i}(f^{n}(t))}{a^{I_i,n}(t)}.
$$
We get that there exist $m_1<\cdots<m_M$ and $\tilde{K}\subset K$ with $\mu(\tilde{K})>1-\delta$
such that for every $\{k_1, \cdots, k_d\} \subset \{1, \dots, M\}$ with $k_1 < \dots < k_d$,
the matrix $B(t)$ defined in \eqref{eq.sequations} is invertible for every $t\in K_M$.
\end{proof}

\begin{proof}[Proof of Proposition~\ref{p.medidazero}]
Assume for the sake of contradiction that $\gamma>0$.
Then let $2\delta<\gamma$ and take $C>0$ large enough that $C(\gamma-2\delta)>1$.
Consider the sequence of integers
$I=\lbrace n_1,n_2,\dots,n_{CN} \rbrace$ given by Proposition~\ref{p.intersection}.
Then there exists $\tilde{K}\subset K$ with $\mu^c(K)>1-\delta$ such that
\begin{equation}\label{eq.Nwisedisjoint}
\cV^{n_{k_1}}(t)\cap \cdots \cap \cV^{n_{k_N}}(t) = \emptyset
\end{equation}
for every $t\in\tilde{K}$ and every $\{k_1 < \cdots < k_N\} \subset \{1, \dots, CN\}$.

First, suppose that $\hm_{\hp}(\graf \cV^j_{\hp})=\gamma$. The property \eqref{eq.Nwisedisjoint}
means that the sets $\cV^{n_{k_i}}(t)$ are $N$-wise disjoint for every $t\in\tilde{K}$. Then,
\begin{equation*}
\begin{aligned}
\hm_{\hp}\Big(\bigcup_{j\in I}\graf \cV^{j}_{\hp}\Big)
& \geq \hm_{\hp}\Big(\bigcup_{j\in I}\graf \cV^{j}_{\hp} \mid \tilde{K}\Big) \\
& \geq \frac{1}{N}\sum_{I}\hm_{\hp}\Big(\graf \cV^{j}_{\hp}\mid \tilde{K} \Big)
\geq C (\gamma-\delta) > 1.
\end{aligned}
\end{equation*}
This is a contradiction because the measure $\hm_{\hp}$ is a probability.

Now we treat the general case. By \eqref{eq.almostall},
$\hm_{(x^s,p)}(\graf \cV^{n_j}_{(x^s,p)})=\gamma$ for every $j$ and
$\mu^s$-almost every $(x^s,p)\in W^{s}_{\loc}(p)$.
In particular, we may a sequence $\big((x^s_k,p)\big)_k \to \hp$ with that property.
Moreover, let $B_k(t)$ be the matrix defined by a system of equations as in \eqref{eq.sequations},
with coefficients depending on $\cV^{n_{i}}_{(x^s_k,p)}$ instead of $\cV^{n_{i}}_{\hp}$.
Keep in mind that, by definition,
$$
\cV_{(x^s_k,p)}=\cH^u_{(x_k^s,z),(x_k^s,p)}(\cV').
$$
The sequence $\cH^u_{(x^s_k,z),(x^s_k,p)}$ converges uniformly to $\cH^u_{\hz,\hp}$ when $k\to\infty$.
Let $\cV_{\hp}=\cH^u_{\hz,\hp}(\cV')$. By Lemma~\ref{l.keylemma} (together with the observation
that $L^1$ convergence implies convergence almost everywhere over some subsequence), up to
restricting to some subsequence of values of $k$ we have
$$
\lim_k \cV^{n_{i}}_{(x^s_k,p)}(t)= \cV^{n_{i}}_{\hp}(t)
\text{ for $\mu^c$-almost every $t\in K$.}
$$
This proves that $B_k$ converges almost everywhere to $B$.

Recall that $\det B(t) \neq 0$ for every $t\in\tilde{K}$, by Lemma~\ref{l.nonzerodet}.
Then, there exist $\tilde{L}\subset\tilde{K}$ with $\mu^{c}(\tilde{L})>1-2\delta$ and
$k_0\ge 1$ such that $\det B_k(t) \neq 0$ for every $t\in\tilde{L}$ and $k\ge k_0$.
Then, applying the previous argument with $(x^s,p)$ and $B_k$ instead of $\hp$ and $B$,
we get that
\begin{equation*}
\begin{aligned}
\hm_{(x^s_k,p)}\Big(\bigcup_{j\in I}\graf \cV^{j}_{(x^s_k,p)}\Big)
& \geq \frac{1}{N}\sum_{I}\hm_{(x^s_k,p)}\Big(\graf \cV^{j}_{(x^s_k,p)}\mid \tilde{L}\Big)\\
& \geq C (\gamma-2\delta) > 1
\end{aligned}
\end{equation*}
for every $k\ge k_0$. Thus, again we get a contradiction (because $\hm_{(x^s_k,p)}$ is a probability).
\end{proof}


\section{Convergence to Dirac measures}\label{s.dirac}

The goal of this section is to prove the following theorem:
\begin{theorem}\label{t.Dirac}
There exists a measurable map $\xi:\hM \to \grassl$ such that, given any $u$-state $\hm$
on $\hM\times\grassl$, we have
$$
\hm_{\hx,t}=\delta_{\xi(\hx,t)}
\quad\text{for $\hmu$-almost every $(\hx,t)\in\hM$.}
$$
In particular, there exists a unique $u$-state.
\end{theorem}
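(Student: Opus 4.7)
The plan is to take $\xi(\hq)$ to be the asymptotic top-$l$ singular direction of the forward cocycle iterates at $\hq$, and to force the Rokhlin disintegration of every $u$-state to be Dirac there, by combining Proposition~\ref{p.limmesa} with Proposition~\ref{p.medidazero}.

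Writing $q_n=(P\times\id_K)(\hf^{-n}(\hq))$, let $u_n^+\in\grassl$ and $v_n^-\in\grassd$ be respectively the top-$l$ and bottom-$(d-l)$ singular directions of $A^n(q_n)$. The pinching condition on the fiber $\{\hp\}\times K$, propagated to $\hmu$-a.e.\ orbit via ergodicity and the $u$-invariance of the strong holonomies, yields an exponential gap $\sigma_n^-/\sigma_n^+\to 0$ between the $l$-th and $(l+1)$-th singular values at $\hmu$-a.e.\ $\hq$. This gap forces $(u_n^+)_n$ to be Cauchy; set $\xi(\hq)=\lim_n u_n^+\in\grassl$. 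The resulting section $\xi:\hM\to\grassl$ is measurable, $\hF$-equivariant, and depends only on $\hA$ — not on the particular $u$-state $\hm$.

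The standard SVD estimate says that for $W\in\grassl$ at angular distance at least $\delta$ from $\Hyp v_n^-$ one has $\dist(A^n(q_n)W,u_n^+)\le C(\sigma_n^-/\sigma_n^+)\delta^{-1}$. Consequently, provided one can arrange
$$
m_{q_n}\!\left(\cN_{\delta_n}(\Hyp v_n^-)\right)\longrightarrow 0
$$
along some sub-exponentially slow sequence $\delta_n\to 0$, the push-forwards $A^n(q_n)_*m_{q_n}$ concentrate on $\xi(\hq)$ in the weak$^*$ sense. Combined with Proposition~\ref{p.limmesa}, this identifies $\hm_{\hq}=\delta_{\xi(\hq)}$ for $\hmu$-a.e.\ $\hq$; since $\xi$ is intrinsic, any two $u$-states then share the same disintegration, giving uniqueness.

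The displayed vanishing is the main obstacle. Proposition~\ref{p.medidazero}, applied to a Borel section $q\mapsto v^-(q)\in\grassd$ of bottom-$(d-l)$ SVD directions, immediately yields $m_{q_n}(\Hyp v_n^-)=0$ for every $n$ and $\hmu$-a.e.\ $\hq$ (via a countable union over $n$, invoking $\hf$-invariance of $\hmu$). Upgrading this pointwise vanishing to uniform smallness on shrinking tubular neighborhoods is precisely where partial hyperbolicity bites: in the hyperbolic setting it follows from a continuous disintegration of $m$ along center fibers, but here the disintegration is continuous only in the shift coordinate (Corollary~\ref{continuidad}) and merely $L^1$-continuous in the vertical direction (Proposition~\ref{p.L1}). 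A Lusin-type restriction to a compact subset of $\hmu$-measure arbitrarily close to one, combined with these continuity properties and outer regularity of $m_{q_n}$ on $\Hyp v_n^-$, should yield the required uniform estimate — this is the technical heart of the proof.
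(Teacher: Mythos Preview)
Your proposal has a circularity at its core. You claim that pinching on the leaf $\{\hp\}\times K$, together with ergodicity and holonomy invariance, ``yields an exponential gap $\sigma_n^-/\sigma_n^+\to 0$ between the $l$-th and $(l+1)$-th singular values at $\hmu$-a.e.\ $\hq$''. But such a gap for the full cocycle is exactly the statement $\lambda_l>\lambda_{l+1}$ for the $\hmu$-Lyapunov exponents --- which is the conclusion of Theorem~\ref{teo.B}, and Theorem~\ref{t.Dirac} is the main step \emph{toward} that conclusion. Pinching concerns only the restricted cocycle $t\mapsto A^n(p,t)$ over $f_p$; it says nothing directly about $A^n(q_n)$ for generic $\hq$, whose orbit visits matrices all over $\hSigma$. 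Ergodicity makes the full exponents constant, not equal to the leaf exponents, and holonomies relate conditional measures, not singular values of cocycle products. Without the gap, neither the Cauchy property of $(u_n^+)_n$ nor the SVD contraction estimate is available, and the definition of $\xi$ collapses.

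The paper avoids this circularity by a different mechanism. Rather than singular-value contraction at generic points, it exploits the periodic leaf and the homoclinic orbit directly: for backward orbits that shadow $\hz_k\to\hp$, the products $A^k(z_k,\cdot)$ are conjugate (via $u$-holonomies) to $A^k(p,\cdot)$, and \emph{there} pinching gives a quasi-projective limit $Q_\#$ collapsing the complement of $\ker Q_\#$ onto $E^{1,\dots,l}$. Proposition~\ref{p.medidazero} then guarantees $m_{p,g(t)}(\ker Q_\#)=0$, and Lemma~\ref{l.topology2} (Gol'dsheid--Margulis) converts this into weak$^*$ convergence to a Dirac --- with no uniform tube estimate needed. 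The passage from the special orbit through $\hz$ to a generic $\hq$ uses Propositions~\ref{p.limmesa}--\ref{p.limmesb}, and the possibility that the image point $\eta(g(t))$ falls in $\ker Q$ is ruled out by a second pass through the homoclinic loop (Lemma~\ref{l.reduce}), where twisting is the essential input. Your acknowledged ``technical heart'' --- uniform smallness of $m_{q_n}$ on shrinking neighborhoods of $\Hyp v_n^-$ --- is thus not the real issue; the paper never needs it, because quasi-projective compactness plus the zero-measure statement of Proposition~\ref{p.medidazero} already suffice.
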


\subsection{Quasi-projective maps}\label{ss.quasiprojective}

We begin by recalling the notion of \emph{quasi-projective map}, which was introduced by Furstenberg~\cite{Fur73}
and extended by Gol'dsheid, Margulis~\cite{GM89}. See also \cite[Section 2.3]{AvV1} for a related discussion.

Let $v\mapsto [v]$ be the canonical projection from $\field^d$ minus the origin to the projective space $\projfield$.
We call $P_\#:\projfield\to\projfield$ a \emph{projective map} if there is some $P\in GL(d,\field)$ that induces
$P_\#$ through $P_\#([v]) = [P(v)]$. The space of projective maps has a natural compactification, the space of
\emph{quasi-projective maps}, defined as follows. The quasi-projective map $Q_\#$ induced in $\projfield$ by a non-zero,
possibly non-invertible, linear map $Q:\field^d \to \field^d$ is given by
$$
Q_\#([v]) = [Q(v)].
$$
Observe that $Q_\#$ is well defined and continuous on the complement of the \emph{kernel}
$\ker Q_\# = \{[v]:v\in\ker Q\}$.

More generally, one calls $P_\#:\grassl\to\grassl$ a \emph{projective map} if there is
$P\in GL(d,\field)$ that induces $P_\#$ through $P_\#(\xi) = P(\xi)$.
Furthermore, the quasi-projective map $Q_\#$ induced in $\grassl$ by a non-zero, possibly
non-invertible, linear map $Q:\field^d \to \field^d$ is given by
$$
Q_\#\xi = Q(\xi).
$$
Observe that $Q_\#$ is well defined and continuous on the complement of the \emph{kernel}
$\ker Q_\# = \{\xi\in\grassl: \xi \cap \ker Q \neq \{0\}\}$.

The space of quasi-projective maps inherits a topology from the space of non-zero linear maps,
through the natural projection $Q\mapsto Q_\#$.
Clearly, every quasi-projective map $Q_\#$ is induced by some linear map $Q$ such that $\|Q\| = 1$.
It follows that the space of quasi-projective maps on any $\grassl$ is compact for this topology.

The following two lemmas are borrowed from Section 2.3 of \cite{AvV1}:

\begin{lemma}\label{l.kernel}
The kernel $\ker Q_\#$ of any quasi-projective map is contained in some hyperplane of $\grassl$.
\end{lemma}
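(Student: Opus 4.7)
The plan is to identify an explicit geometric hyperplane $\Hyp V$ that contains $\ker Q_\#$ by choosing $V \in \grassd$ to be any $(d-l)$-dimensional subspace of $\complex^d$ extending $\ker Q$.

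First I would unwind the definition: by construction,
$$
\ker Q_\# = \{\xi \in \grassl : \xi \cap \ker Q \neq \{0\}\}.
$$
Next, since $Q_\#$ is supposed to be a quasi-projective map \emph{on} $\grassl$ (i.e., $Q(\xi)$ must have dimension exactly $l$ whenever $\xi$ lies outside the kernel), the map $Q$ must have rank at least $l$, and consequently
$$
\dim \ker Q \le d - l.
$$
This inequality is the only preliminary one needs; it allows us to pick some $V \in \grassd$ containing $\ker Q$, using the fact that $\dim V = d-l \ge \dim \ker Q$.

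With such a $V$ fixed, the verification is immediate: for any $\xi \in \ker Q_\#$, one has
$$
\xi \cap V \supseteq \xi \cap \ker Q \neq \{0\},
$$
so $\xi \in \Hyp V$ by the definition of the geometric hyperplane. Hence $\ker Q_\# \subseteq \Hyp V$, which is exactly the desired conclusion.

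There is no real obstacle here: the lemma is essentially a bookkeeping exercise once one unwinds what $\ker Q_\#$ means and observes the trivial rank bound. The content of the statement is the structural fact that the singular locus of a quasi-projective map on $\grassl$ always lies inside a geometric hyperplane, a fact that will be useful later (e.g., combined with Proposition~\ref{p.medidazero}) because $u$-states give zero mass to such sets.
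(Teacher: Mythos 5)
Your construction is the right one, and at the level at which this paper defines quasi-projective maps of $\grassl$ it is essentially complete: since $\ker Q_\#=\{\xi\in\grassl:\xi\cap\ker Q\neq\{0\}\}$, any $V\in\grassd$ with $V\supseteq\ker Q$ satisfies $\ker Q_\#\subseteq\Hyp V$, and such a $V$ exists exactly when $\dim\ker Q\le d-l$. Note, though, that the paper itself offers no proof of this lemma; it is quoted from Section~2.3 of \cite{AvV1}, where the argument is run in the projectivized exterior power: a quasi-projective map of $\grassl$ is there induced by a nonzero endomorphism $B$ of $\exteriorl$, the kernel of the induced map on $\projective\exteriorl$ is $\projective(\ker B)$, which lies in a linear hyperplane of $\exteriorl$, and intersecting with the Pl\"ucker image gives a hyperplane section of $\grassl$. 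Your route is more elementary and has the merit of producing a \emph{geometric} hyperplane $\Hyp V$ with $V\in\grassd$, which is the form the paper actually needs later (these are the hyperplanes that Proposition~\ref{p.medidazero} shows to be null sets for the conditional measures). The price is that it only covers quasi-projective maps of the special form $\xi\mapsto Q(\xi)$ with $Q$ acting on $\complex^d$; the limits arising in Section~\ref{s.dirac} are, if one compactifies as in \cite{AvV1}, arbitrary nonzero endomorphisms of $\exteriorl$, for which your argument does not apply and the hyperplane-section formulation is the robust one.

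The one step that deserves a caveat is your inference that $\operatorname{rank}Q\ge l$. The paper's definition only asks that $Q$ be a nonzero map of $\complex^d$, and your parenthetical justification ("$Q(\xi)$ has dimension exactly $l$ whenever $\xi$ lies outside the kernel") is vacuously true even when $\operatorname{rank}Q<l$: in that case every $l$-dimensional $\xi$ meets $\ker Q$ nontrivially, so $\ker Q_\#=\grassl$, and the lemma is simply false, since no $\Hyp V$ contains the whole Grassmannian. So the hypothesis $\operatorname{rank}Q\ge l$ (equivalently $\Lambda^l Q\neq 0$) cannot be derived the way you state it; it has to be read into what "quasi-projective map of $\grassl$" means, which is automatic in the formulation of \cite{AvV1}, where the inducing object is a nonzero endomorphism of $\exteriorl$ obtained by normalizing $\Lambda^l$ of projective maps (whereas normalizing on $\complex^d$, as the paper's wording suggests, can produce limits of rank $<l$). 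With that reading made explicit, your proof is correct.
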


\begin{lemma}\label{l.topology2}
If $(P_{n})_n$ is a sequence of projective maps converging to some
quasi-projective map $Q$ of $\grassl$, and $(\nu_n)_n$ is a
sequence of probability measures in $\grassl$ converging weakly to
some probability $\nu$ with $\nu(\ker Q)=0$, then
$(P_{n})_*\nu_n$ converges weakly to $Q_*\nu$.
\end{lemma}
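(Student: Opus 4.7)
The plan is a standard Portmanteau-style argument, exploiting the fact that the only obstruction to passing the weak convergence $\nu_n\to\nu$ through the push-forwards is that $Q$ may fail to be defined or continuous on $\ker Q$, and that by hypothesis $\nu$ does not see this set. By Lemma~\ref{l.kernel}, $\ker Q$ is closed (being contained in a geometric hyperplane of $\grassl$), so regularity of $\nu$ lets us isolate it inside a neighborhood of arbitrarily small $\nu$-measure.

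More precisely, fix a continuous test function $\varphi:\grassl\to\real$; I need to show $\int\varphi\,d(P_n)_*\nu_n\to\int\varphi\,dQ_*\nu$. Given $\epsilon>0$, I would first choose an open set $U\supset\ker Q$ whose closure $\bar U$ still satisfies $\nu(\bar U)<\epsilon$, obtained by nesting $\bar U$ inside a slightly larger open neighborhood of small $\nu$-measure via normality of the compact space $\grassl$. Then $K=\grassl\setminus U$ is compact and disjoint from $\ker Q$, so $Q$ is defined and continuous on $K$. Choosing representing linear maps of unit norm for $P_n$ and $Q$, the convergence $P_n\to Q$ of quasi-projective maps amounts (up to normalization) to norm convergence of the linear representatives; joint continuity of the evaluation $(L,\xi)\mapsto L\xi$ at pairs with $\xi$ transverse to $\ker L$ then forces $P_n\to Q$ uniformly on $K$, hence $\varphi\circ P_n\to\varphi\circ Q$ uniformly on $K$.

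Next I would use the Tietze extension theorem to extend $\varphi\circ Q|_K$ to a continuous function $\tilde\varphi$ on $\grassl$ with $\|\tilde\varphi\|_\infty\le\|\varphi\|_\infty$, and split
$$
\int\varphi\circ P_n\,d\nu_n-\int\varphi\circ Q\,d\nu
=\int(\varphi\circ P_n-\tilde\varphi)\,d\nu_n+\Bigl(\int\tilde\varphi\,d\nu_n-\int\tilde\varphi\,d\nu\Bigr)+\int(\tilde\varphi-\varphi\circ Q)\,d\nu.
$$
The middle term tends to $0$ by weak convergence $\nu_n\to\nu$ applied to the bounded continuous function $\tilde\varphi$. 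The last term is supported on $U$ (where $\tilde\varphi$ and $\varphi\circ Q$ may disagree) and is bounded in absolute value by $2\|\varphi\|_\infty\nu(U)\le 2\|\varphi\|_\infty\epsilon$. For the first term I would split further into integrals over $K$ and $U$: on $K$ the integrand vanishes uniformly as $n\to\infty$ by the previous paragraph, while on $U$ it is bounded by $2\|\varphi\|_\infty\nu_n(\bar U)$, and Portmanteau gives $\limsup_n\nu_n(\bar U)\le\nu(\bar U)<\epsilon$. Combining, the whole difference is $O(\epsilon)$ for all sufficiently large $n$, and since $\epsilon$ is arbitrary the lemma follows.

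The only mildly delicate point is the uniform convergence $P_n\to Q$ on $K$, which requires care in choosing normalized linear representatives and then invoking joint continuity of evaluation off the kernel locus; this, however, is a routine compactness argument. Everything else is a straightforward combination of the Portmanteau theorem and Tietze extension.
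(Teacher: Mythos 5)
Your argument is correct. The paper itself offers no proof of this lemma---it is borrowed from Section~2.3 of Avila--Viana \cite{AvV1}---and the argument there runs along the same standard lines as yours: uniform convergence of $P_n$ to $Q$ on a compact set disjoint from $\ker Q$, combined with a Portmanteau-type estimate on a closed neighbourhood of the kernel of small $\nu$- and $\nu_n$-mass. One small repair to your justification: being contained in a geometric hyperplane does not by itself make $\ker Q$ closed; rather, $\ker Q=\{\xi\in\grassl:\xi\cap\ker\tilde Q\neq\{0\}\}$ (with $\tilde Q$ a linear representative) is closed directly---if $\xi_k\to\xi$ and unit vectors $v_k\in\xi_k\cap\ker\tilde Q$ converge to $v$, then $v\in\xi\cap\ker\tilde Q$---and this is what your regularity/normality step actually needs.
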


\subsection{Convergence}

Recall that, given $1 \le l \le d-1$ and $1 \le i_1< \cdots < i_l \le d$, we write
$$
E^{i_1, \dots, i_l}(t)=E^{i_1}(t) \wedge \cdots \wedge E^{i_l}(t) \in \exteriorl
$$
for every $t\in K$ such that the Oseledets subspaces $E^{i}_t$ are defined.
By a slight abuse of language, we also denote by $E^{i_1, \dots, i_l}(t)$ the associated
vector subspace, that is,
$$
E^{i_1}(t) \oplus \cdots \oplus E^{i_l}(t) \in \grassl.
$$
In this way, each $E^{i_1, \dots, i_l}$ becomes an element of $\sectl$.

Let $\hp\in\hSigma$ be the fixed point of $\hsigma$ and $\hz\in\hSigma$ be a homoclinic point of $\hp$
with $\hz\in W^u_{\loc}(\hp)$. Fix $\imath\in\natural$ such that $\hsigma^{\imath}(\hz) \in W^s_{\loc}(\hp)$.
For each $k\ge 0$, denote $\hz_{k}=\hsigma^{-k}(\hz)$ and $z_{k}=P(\hz_k)$.
Observe that $\hf_{\hz_{k}} = f_{z_k}$ and, similarly, $\hA(\hp,t) = A(p,t)$.
We take advantage of this fact to simplify the notations a bit (removing ``hats´´) in the arguments that follow.
Recall that, given $\hy\in W^u_{\loc}(\hx)$, $t \in K$ and $V\in\sectl$, we take $\cH^u_{\hx,\hy}V\in\sectl$
to be defined by
$$
\cH^u_{\hx,\hy}V(t)=H^u_{(\hx,s),(\hy,t)}V(s)
\text{ with } s = h^u_{\hy,\hx}(t).
$$

\begin{proposition}\label{nova}
Let $\eta = \cH^{u}_{\hp,\hz} E^{1, \dots, l} \in \sectl$.
For every sequence $(k_j)_j\to \infty$ there exists a sub-sequence $(k'_i)_i$ such that
$$
\lim_{i \to \infty}A^{k'_i}\big(z_{k'_i},t_{k'_i}\big)_{\ast} m_{z_{k'_i},t_{k'_i}}=\delta_{\eta(t)},
\text{ where } t_k=(f^k_{z_k})^{-1}(t),
$$
for $\mu^c$-almost every $t\in K$.
\end{proposition}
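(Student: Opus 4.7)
The plan is to show that every sub-sequential weak${}^{\ast}$-limit of $A^{k}(z_{k},t_{k})_{\ast}m_{z_{k},t_{k}}$ equals $\delta_{\eta(t)}$ for $\mu^{c}$-almost every $t$. The strategy combines three ingredients: a cocycle identity for the $u$-holonomy that reduces the cocycle at $\hz_{k}$ to the one at the fixed point $\hp$; the pinching hypothesis, which determines the form of the limit quasi-projective map via the Pl\"ucker embedding of Section~\ref{ss.quasiprojective}; and Proposition~\ref{p.medidazero}, which rules out mass on the kernel so that Lemma~\ref{l.topology2} applies.

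\emph{Reduction to $\hp$.} Property (a) of the $u$-holonomy, applied with the iterate $\hf^{k}$ that fixes $\hp$ and sends $\hz_{k}$ to $\hz$, gives
$$
A^{k}(\hz_{k}) = H^{u}_{\hp,\hz} \circ A^{k}(\hp) \circ (H^{u}_{\hp,\hz_{k}})^{-1}
$$
between the appropriate fibers. The $f$-equivariance of the center $u$-holonomy identifies the source of $A^{k}(\hp,\cdot)$ here as $s_{k} = h^{u}_{\hz_{k},\hp}(t_{k}) = f_{p}^{-k}(h^{u}_{\hz,\hp}(t))$, while the target is the $k$-independent point $\tilde t = h^{u}_{\hz,\hp}(t)$. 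Since $\hz_{k}\to\hp$, the H\"older estimate (c) forces $H^{u}_{\hp,\hz_{k}}\to\id$, so every sub-sequential limit of $A^{k}(z_{k},t_{k})_{\#}$ in the compact space of quasi-projective maps on $\grassl$ factors as $H^{u}_{\hp,\hz} \circ Q_{\#}$, where $Q_{\#}$ is a limit of $A^{k}(\hp,s_{k})_{\#}$.

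\emph{Identifying $Q_{\#}$ and controlling the measure.} Equi-continuity of $\{f^{k}_{z_{k}}\}$ (from the mostly neutral center assumption) plus Arzel\`a-Ascoli gives a subsequence along which $f^{k}_{z_{k}} \to g$ uniformly, with $g:K\to K$ absolutely continuous and of bounded Jacobian by Lemma~\ref{l.abs.cont.limit}; in particular $s_{k} \to g^{-1}(t)$. By pinching, $A^{k}(\hp,s_{k})$ is diagonal in the Oseledets basis \eqref{eq.basel} with distinct growth rates $\sum_{i \in I}\lambda_{i}$; Pl\"ucker-normalizing and extracting a further subsequence, the iterates converge to a rank-one quasi-projective map with image $E^{1,\ldots,l}(\tilde t)$ and kernel $\Hyp(E^{l+1,\ldots,d}(g^{-1}(t)))$. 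Post-composition with $H^{u}_{\hp,\hz}$ sends the image to $\eta(t) = \cH^{u}_{\hp,\hz} E^{1,\ldots,l}(t)$. A further diagonal extraction, modeled on the proof of Proposition~\ref{p.conv}, gives weak${}^{\ast}$ convergence $m_{z_{k},t_{k}} \to \tilde m(t)$ for $\mu^{c}$-a.e. $t$. Proposition~\ref{p.medidazero} applied to the measurable section $E^{l+1,\ldots,d} \in \hsec$ yields $m_{p,s}(\Hyp(E^{l+1,\ldots,d}(s))) = 0$ for $\mu^{c}$-a.e. $s$; the bounded-Jacobian absolute continuity of $g$ transfers this to a zero-mass statement at $s = g^{-1}(t)$ for $\mu^{c}$-a.e. $t$, so $\tilde m(t)$ avoids the kernel of $Q_{\#}$. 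Lemma~\ref{l.topology2} then gives the claimed convergence to $\delta_{\eta(t)}$.

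\emph{Main obstacle.} The delicate point is identifying the weak${}^{\ast}$-limit $\tilde m(t)$ with $m_{p,g^{-1}(t)}$, because $t_{k} = (f^{k}_{z_{k}})^{-1}(t)$ is a \emph{backward} iterate and Proposition~\ref{p.L1} does not apply directly; a tailored $L^{1}$-continuity argument is required to make this identification. Beyond that, the bookkeeping of the several sub-sequence extractions—uniform convergence of $f^{k}_{z_{k}}$, convergence of $s_{k}$, quasi-projective convergence of $A^{k}(\hp,\cdot)_{\#}$, weak${}^{\ast}$ convergence of $m_{z_{k},t_{k}}$, and the exceptional $\mu^{c}$-null sets coming from Proposition~\ref{p.medidazero}—must be combined into a single set of full $\mu^{c}$-measure of parameters $t$ on which every convergence and every required zero-mass statement holds simultaneously.
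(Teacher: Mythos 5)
Your outline follows the same route as the paper (holonomy reduction to the fixed fiber, quasi-projective limits of $A^{k}(p,\cdot)$ with image in the Oseledets sum $E^{1,\dots,l}$, Arzel\`a--Ascoli plus $L^1$-continuity of the conditionals, Proposition~\ref{p.medidazero} to avoid the kernel, and Lemma~\ref{l.topology2} to conclude), but there is a genuine gap in the kernel-avoidance step. You assert that, after Pl\"ucker normalization, the limit quasi-projective map has kernel $\Hyp\big(E^{l+1,\dots,d}(g^{-1}(t))\big)$, and you then apply Proposition~\ref{p.medidazero} only to that one section. This identification is not justified: the diagonalizing data $e^{J}(s_k)$ live at the moving points $s_k=(f^{k}_{p})^{-1}(h^u_{\hz,\hp}(t))$, and the Oseledets splitting is merely measurable, so along a sequence $s_k$ that converges in $K$ (but not along an orbit where one could invoke any continuity) there is no reason for $E^{J}(s_k)$ to converge to $E^{J}(\lim_k s_k)$. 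The kernel of the limit is a limit of hyperplane sections attached to the $s_k$, not the section evaluated at the limit point, so knowing that $m_{p,s}\big(\Hyp E^{l+1,\dots,d}(s)\big)=0$ for a.e.\ $s$ does not rule out mass on $\ker Q_\#$. The paper avoids this entirely: it never identifies the kernel, only the image; it invokes Lemma~\ref{l.kernel} (the kernel is contained in \emph{some} hyperplane) together with the stronger consequence of Proposition~\ref{p.medidazero} obtained through the measurable maximizing section of Lemma~\ref{l.supattained}, namely that for a.e.\ $s$ the measure $m_{p,s}$ gives zero weight to \emph{every} hyperplane, and then transfers this to $s=g(t)$ using the absolute continuity and bounded Jacobian of $g$ from Lemma~\ref{l.abs.cont.limit}. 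Replacing your single-section argument by this ``zero mass on all hyperplanes'' statement repairs the proof.

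Concerning what you call the main obstacle: the identification of the weak$^*$ limit of $m_{z_k,t_k}$ with $m_{p,g(t)}$ does not require a new tailored argument. Proposition~\ref{p.L1} is stated for an arbitrary sequence of \emph{integers} $(j_n)_n$, precisely so that it covers these backward compositions: here the base points are $z_k\to p$, the maps $(f^{k}_{z_k})^{-1}$ converge uniformly (after Arzel\`a--Ascoli) to $g$, and the bounded Jacobians needed in Lemma~\ref{l.keylemma} come from Lemma~\ref{l.abs.cont.limit}; a diagonal extraction as in the proof of Proposition~\ref{p.conv} then gives $m_{z_{k'_i},t_{k'_i}}\to m_{p,g(t)}$ for $\mu^c$-a.e.\ $t$, which is exactly how the paper proceeds. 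Your remaining bookkeeping of sub-sequences and null sets matches the paper's argument.
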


\begin{proof}
To simplify our notations, let $h=h^u_{\hz,\hp}$ and $h_k=h^u_{\hz_k,\hp}$.
Observe that $f^k_{z_k}= h^u_{\hp,\hz} \circ f^k_{p} \circ h_k$ and
\begin{equation*}
A^{k}(z_{k},t_k)
= H^u_{(\hp,h(t)),(\hz,t)} \, A^{k}(p, h_k(t_k)) \, H^{u}_{(\hz_k,t_k),(\hp, h_k(t_k))}.
\end{equation*}
So $\big(A^{k}_{z_k,t_k}\big)_{\ast} m_{z_k,t_k}$ is equal to
$$
\big( H^u_{(\hp,h(t)),(\hz,t)} \, A^{k}(p, h_k(t_k))\big) \big(H^{u}_{(\hz_k,t_k),(\hp, h_k(t_k))}\big)_* m_{z_k,t_k}.
$$
Note that $H^{u}_{(\hz_k,t_k),(\hp, h_k(t_k))}$ converges uniformly to the identity map $\id$,
because $\hz_k$ converges to $\hp$.

Let $K_0\subset K$ be a full $\mu^c$-measure such that the conclusion of the Oseledets theorem holds
at $(\hp,t)$ for every $t\in K_0$. We claim that for any $t\in K_0$ and every sub-sequence of
$$
A^{k}(p,h_k(t_k))
$$
that converges, the limit is a quasi-projective transformation $Q_\#$ that maps every point outside
$\ker Q_\#$ to $E^{1, \dots, l}(h_k(t_k))\in\grassl$. This can be seen as follows.

Given $w \in \exteriorl$ and $k\ge 1$, we may write
$$
w=\bigoplus_{1 \le i_1 < \cdots < i_l \le d} w^{i_1, \dots, i_l}_k E^{i_1, \dots, i_l}\big((f_p^k)^{-1}h_{\hz,\hp}(t)\big)
$$
with coefficients $w_k^1, \dots, w_k^N \in \field$. It follows from the sub-exponential decay of angles of the Oseledets
splitting that $\vert w^i_k\vert$ grows sub-exponentially in $k$ for every $i=1, \dots, N$.
Recall that $(f^k_p)^{-1}(h(t))=h_k(t_k)$.
Then, the action of $A^{k}(p, h_k(t_k))$ in the projectivization of the exterior power is given by
$$
A^{k}(p, h_k(t_k)) w
= \bigoplus_{j=1}^N w^j_k \frac{\norm{A^k(p,(f^k_p)^{-1}(h(t)))\mid E^{I_j}_{(f^k_p)^{-1}(h(t)}}}{\norm{A^k(p,(f^k_p)^{-1}(h(t))}} E^{I_j}_{h(t)}.
$$
The quotient of the norms converges to zero for any $j>1$. Thus, we have that either
$A^{k}(p, h_k(t_k))w \to E^{I_1}_{h(t)}$ or $A^{k}(p, h_k(t_k))w\to 0$. The latter case means
that $w$ is in the kernel of the limit. Thus, any limit quasi-projective transformation does map
the complement of the kernel to $E^{I_1}_{h(t)}$, as claimed.

As an immediate consequence we get that for any $t\in K_0$ and every sub-sequence of
$$
H^u_{(\hp,h(t)),(\hz,t)} \, A^{k}(p, h_k(t_k))
$$
that converges, the limit is a quasi-projective transformation that maps every point outside the kernel
to $H^u_{(\hp,h(t)),(\hz,t)} E^{I_1}_{h(t)}$.

By Remark~\ref{r.equicontinuity}, the family $\{f^n_{z_k}: n, k \ge 1\}$ is equicontinuous.
Using Arzela-Ascoli, it follows that we can find a sub-sequence of $(k_j)_j$ along which the family
$(f^{k}_{z_{k}})^{-1}$ converges to some $g:K\to K$. Then, by Proposition~\ref{p.conv}, there exists
a further subsequence $(k'_i)_i$ and a full $\mu^c$-measure set $K_1\subset K$ such that
$$
m_{z_{k'_i},t_{k'_i}}\to m_{p,g(t)}
$$
for every $t \in K_1$.

By Proposition~\ref{p.medidazero} and Lemma~\ref{l.abs.cont.limit}, there exists a full $\mu^c$-measure
set $K_2\subset K$ such that $m_{p,g(t)}$ gives zero weight to every hyperplane of $\grassl$ for every $t\in K_2$.
Then, by Lemma~\ref{l.topology2} and the previous observations,
$$
\lim_{k\to \infty} A^{k}(z_k,t_k)_{\ast}m_{z_k,t_k}
=\delta_{\eta(t)}
$$
along any sub-sequence such that $A^k(z_k,t_k)$ converges. This yields the claim of the proposition.
\end{proof}
\begin{remark}\label{r.replace}
The argument remains valid when one replaces the homoclinic point $\hz$ by any other
point in $W^{u}(\hp)$.
\end{remark}

It follows from Proposition~\ref{p.limmesa} that there is a full $\mu^s \times \mu^u$-measure subset
of points $\hx \in \hSigma$ such that
\begin{equation}\label{eq.limitexists}
\lim_{n\to \infty} A^{n}(x_n,t^{\hx}_{n})_{\ast}m_{x_n,t^{\hx}_{n}}=\hm_{\hx,t}
\end{equation}
for $\mu^{c}$-almost every $t\in K$, $x_{n}=P\big(\hsigma^{-n}(\hx)\big)$ and $t^{\hx}_{n}=({f^{n}_{x_{n}}})^{-1}(t)$.
Since the shift is ergodic with respect to the projection of $\hmu$ on $\hSigma$, one may also require that
$$
\lim_{j \to \infty} \hsigma^{-n_{j}}(\hx)=\hz.
$$
for some sub-sequence $(n_{j})_j\to\infty$.

Fix any $\hx\in\hSigma$ such that both conditions hold. Let $k\ge 1$ be fixed, for the time being.
Then \eqref{eq.limitexists} implies that
\begin{equation}\label{eq.limit}
\begin{aligned}
\lim_{j \to \infty} A^{n_j} & (x_{n_j},t^{\hx}_{n_j})_{\ast}m_{x_{n_j},t^{\hx}_{n_j}}\\
&= \lim_{j \to \infty}A^{n_j+k}(x_{n_j+k},t^{\hx}_{n_j+k})_{\ast}m_{x_{n_j+k},t^{\hx}_{n_j+k}}\\
&= \lim_{j \to \infty}A^{n_j}(x_{n_j},t^{\hx}_{n_j})_{\ast} A^{k}(x_{n_j+k},t^{\hx}_{n_j+k})_{\ast} m_{x_{n_j+k},t^{\hx}_{n_j+k}}.
\end{aligned}
\end{equation}
Note also that, by definition,
\begin{equation*}
t^{\hx}_{n_j} = f^{k}_{x_{n_j}+k}(t^{\hx}_{n_j+k}).
\end{equation*}

We use once more the fact that $\{\hf_{\hx}^n: n\in\integer \text{ and } \hx \in\hSigma\}$
is equicontinuous (Remark~\ref{equicontinuitysuffices}).
Using Ascoli-Arzela, it follows that the exists a sequence $(n_j)_j\to\infty$ such that $(f^{n_j}_{x_{n_j}})^{-1}_j$
converges to some $g:K \to K$.
Up to further restricting to a sub-sequence if necessary, Proposition~\ref{p.L1} ensures that
$$
m_{x_{n_j+k},t^{\hx}_{n_j+k}} \text{ converges to } m_{z_{k},g(t)^{\hz}_{k}}
\text{ for $\mu^{c}$-almost every $t$, }
$$
where $z_{k}=P\big(\hsigma^{-k}(\hz)\big)$ and $g(t)^{\hz}_{k} =({f^{k}_{z_{k}}})^{-1}(g(t)) $.

Fix any $t\in K$ such that the previous claims are fulfilled. Let $(n'_i)_i$ be any sub-sequence of $(n_j)_j$
such that $A^{n'_i}\big(x_{n'_i},t^{x}_{n'_i}\big)$ converges to some quasi-projective map
$Q:\grassl \to \grassl$. Then \eqref{eq.limit} may be written as
\begin{equation*}
Q_{\ast} A^{k}\big(z_{k},{g(t)}^{\hz}_{k}\big)_{\ast} m_{z_{k},{g(t)}^{\hz}_{k}}
\end{equation*}
If $\eta(g(t)) \notin \ker Q$ then, making $k\to \infty$, we may use Lemma~\ref{l.topology2} and Proposition~\ref{nova}
to conclude that $\hm_{x,t}=\delta_{Q\,\eta(g(t))}$. This gives the conclusion of Theorem~\ref{t.Dirac} under this assumption.


Let us show that we can always reduce the proof to this case.
Recall that $\imath\in\integer$ was chosen so that $\hsigma^\imath(\hz)\in W^s_{\loc}(\hp)$.
Define $\hy, \hw \in\hSigma$ by
$$
\begin{aligned}
\hsigma^{n_j+k}(\hy) \in W^{u}_{\loc}(\hsigma^\imath(\hz))\cap W^{s}_{\loc}(x_{n_j+k})\text{ and }
\hsigma^{\imath}(\hw) \in W^{u}_{\loc}(\hsigma^\imath(\hz))\cap W^{s}_{\loc}(z_{k}).
\end{aligned}
$$
Note that $\hy$ depends on $k$ and $j$ and $\hw$ that depends on $k$. We denote
$y=P(\hy)$ and $w=P(\hw)$. Moreover, $y_n=P(\hsigma^{-n}(\hy))$ and $w_n=P(\hsigma^{-n}(\hw))$ for each $n\ge 0$
Let $m \in \natural$ be fixed,
for the time being. We have that $x_{i}=y_{i}$ with $0\leq i \leq n_{j}+k$. So,
$$
\hsigma^{\imath+m}\big(P(\hsigma^{-n_j-k-\imath-m}(\hy))\big)=y_{n_j+k}=x_{n_j+k}.
$$
Also $\hsigma^{-n_j-k}(y)\to \hsigma^\imath (\hw)$, and so
$\hsigma^{-n_j-k-\imath-m}(y)\to \hsigma^{-m} (\hw)$ when $j\to \infty$.
Therefore, by Propositions~\ref{p.limmesa}
$$
\begin{aligned}
\hm_{\hx,t}
& = \lim_{j\to\infty} A^{n_j+k}\big(x_{n_j+k},t^{x}_{n_j+k}\big)_{\ast} m_{x_{n_j+k},t^{x}_{n_j+k}}\\
& = \lim_{j\to\infty} A^{m_j}\big(y_{m_j},t^{\hy}_{m_j}\big)_{\ast} m_{y_{m_j},t^{\hy}_{m_j}}
\end{aligned}
$$
where $m_{j}=n_j+k+\imath+m$. The last expression may be rewritten as
\begin{equation*}
A^{n_j}\big(x_{n_j},t^{\hx}_{n_j}\big)_{\ast} A^{k+\imath}\big(y_{n_j+k+\imath},t^{\hy}_{y_{n_j+k+\imath}}\big)_{\ast} A^ {m}\big(y_{m_j},t^{\hy}_{m_j}\big)_{\ast} m_{y_{m_j},t^{\hy}_{m_j}}.
\end{equation*}
Making $j\to \infty$,
\begin{equation*}
\begin{aligned}
\big({f^{n_j+k+\imath}_{y_{n_j+k+\imath}}}\big) ^{-1} & \to \big({f^{k+\imath}_{\hw}}\big)^{-1} \circ g\\
A^{k+\imath}\big(y_{n_j+k+\imath},t^{\hy}_{y_{n_j+k+\imath}}\big) & \to A^{k+\imath} \big(w,\big({f^{k+\imath}_w}\big)^{-1}g(t)\big)\\
A^{m}\big(y_{m_j},t^{\hy}_{m_j}\big) & \to A^ {m}\big(w_{m},\big({f^{k+\imath+m}_{w_{m}}}\big)^{-1}g(t)\big)
\end{aligned}
\end{equation*}
and, restricting to a sub-sequence if necessary,
\begin{equation*}
m_{y_{m_j},t^{\hy}_{m_j}}\to m_{w_{m},\big({f^{k+\imath+m}_{w_{m}}}\big)^{-1}g(t)}
\text{ for $\mu^c$-almost every $t$.}
\end{equation*}


\begin{lemma}\label{l.reduce}
Denote $\tilde\eta(s) = H^{u}_{(\hp,\tilde{h}(s)),(\hw,s)} E^{I_1}_{\tilde{h}(s)}$ with $\tilde{h}(s) = h^{u}_{\hw,\hp}(s)$.
Then there exists a full $\mu^c$-measure set $\tilde{K}\subset K$ and a sub-sequence $(k_j)_j$
such that for every $t\in\tilde{K}$ there exists a sub-sequence $(n_i'=n_i'(t))_i$ of $(n_j)_j(t)$
such that
$$
A^{n'_i}\big(x_{n'_i},t^{\hx}_{n'_i}\big)\circ A^{k+\imath}\big(y_{n'_i+k_j+\imath},t^{\hy}_{y_{n'_i+k_j+\imath}}\big)
$$
converges to some quasi-projective transformation $\tilde{Q}$. Moreover,
$\tilde\eta\big((f^{k_j+\imath}_{w})^{-1}g(t)\big)$ is not in $\ker\tilde{Q}$ if $j$ is sufficiently large,
depending on $t$.
\end{lemma}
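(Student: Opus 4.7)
The plan is to extract three nested subsequences using compactness of three different families, and then to handle the kernel-avoidance by exploiting the twisting condition. First, by Arzela-Ascoli applied to the equi-continuous family $\{\hf_{\hx}^n\}$ (Remark~\ref{r.equicontinuity}), I would extract a sub-sequence $(k_j)_j\to\infty$, fixed once and for all, along which $(f^{k_j+\imath}_w)^{-1}:K\to K$ converges uniformly. This single sub-sequence serves uniformly for every $t\in\tilde K$, where $\tilde K$ is a full $\mu^c$-measure subset to be pinned down at the end.

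Second, for each $t\in\tilde K$ and each fixed $k_j$, compactness of the space of quasi-projective transformations on $\grassl$ (Section~\ref{ss.quasiprojective}) allows me to select a further sub-sequence $(n'_i)_i$ of $(n_j)_j(t)$ along which $A^{n'_i}(x_{n'_i},t^{\hx}_{n'_i})$ converges to some quasi-projective map $Q$. Along the same sub-sequence one has $y_{n'_i+k_j+\imath}\to\hw$ and $t^{\hy}_{y_{n'_i+k_j+\imath}}\to(f^{k_j+\imath}_w)^{-1}g(t)$, so continuity of the cocycle and of the unstable holonomies yields
\[
A^{k_j+\imath}\bigl(y_{n'_i+k_j+\imath},t^{\hy}_{y_{n'_i+k_j+\imath}}\bigr)\;\longrightarrow\;A^{k_j+\imath}\bigl(w,(f^{k_j+\imath}_w)^{-1}g(t)\bigr)
\]
in $\GL$, hence projectively. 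Composing the two limits gives
\[
\tilde Q \;=\; Q \;\circ\; A^{k_j+\imath}\bigl(w,(f^{k_j+\imath}_w)^{-1}g(t)\bigr),
\]
which is quasi-projective as the composition of a projective and a quasi-projective map.

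The main obstacle is the non-kernel condition. Since $A^{k_j+\imath}(w,\cdot)$ is invertible, the requirement $\tilde\eta\bigl((f^{k_j+\imath}_w)^{-1}g(t)\bigr)\notin\ker\tilde Q$ is equivalent to
\[
R_j \;:=\; A^{k_j+\imath}\bigl(w,(f^{k_j+\imath}_w)^{-1}g(t)\bigr)\,\tilde\eta\bigl((f^{k_j+\imath}_w)^{-1}g(t)\bigr) \;\notin\; \ker Q.
\]
By Remark~\ref{r.replace} applied to $\hw\in W^u(\hp)$, an analog of the kernel/image analysis inside the proof of Proposition~\ref{nova} shows that any quasi-projective accumulation point (as $k\to\infty$) of $A^{k+\imath}(w,\cdot)$ sends the complement of its kernel to a specific point, obtained by transporting the leading Oseledets line at $\hp$ via unstable holonomies through $\hw$. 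Using Lemma~\ref{l.kernel} and Proposition~\ref{p.medidazero} one sees that $\tilde\eta\bigl((f^{k_j+\imath}_w)^{-1}g(t)\bigr)$ falls outside this limiting kernel for $t$ in a full-measure set, whence $R_j$ converges, along a further sub-sequence in $j$, to this specific limit point. The remaining difficulty is that $\ker Q$ is only known to be a measure-theoretically negligible hyperplane, while $R_j$ is approaching a single prescribed point; to convert the measure-theoretic smallness of $\ker Q$ into the required pointwise avoidance, I would appeal to the twisting condition (Section~\ref{ss.pinching_twisting}), which guarantees that the graph of the limiting Grassmannian section has $m_{p,g(t)}$-measure zero (Proposition~\ref{p.medidazero}). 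Restricting $\tilde K$ to the full-measure set where that section avoids the hyperplane containing $\ker Q$ gives the pointwise avoidance of the limit, and openness of the complement of the closed set $\ker Q$ in $\grassl$ then yields $R_j\notin\ker Q$ for all sufficiently large $j$, completing the proof.
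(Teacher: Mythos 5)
Your first two steps match the paper: extracting $(k_j)_j$ by Arzela--Ascoli so that $\big(f^{k_j+\imath}_{w}\big)^{-1}$ converges, extracting $(n'_i)_i$ by compactness of the space of quasi-projective maps, and identifying $\tilde Q = Q\circ A^{k_j+\imath}\big(w,(f^{k_j+\imath}_w)^{-1}g(t)\big)$, so that the problem becomes $R_j:=A^{k_j+\imath}(w,\cdot)\,\tilde\eta(\cdot)\notin\ker Q$. The gap is in how you try to get this avoidance. You invoke Proposition~\ref{p.medidazero} twice -- first to claim $\tilde\eta$ avoids the kernel of the limit of $A^{k+\imath}(w,\cdot)$, then to ``restrict $\tilde K$ to the full-measure set where that section avoids the hyperplane containing $\ker Q$''. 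But Proposition~\ref{p.medidazero} is a statement about the \emph{measures} $m_{x,t}$ giving zero weight to dual graphs of sections; its role (via Lemma~\ref{l.topology2}) is to make pushforwards of these measures converge, and it says nothing about whether a specific deterministic point (the value or limit of $\tilde\eta$ or $R_j$) lies on a specific hyperplane. Worse, the hyperplane $\Hyp v\supset\ker Q$ given by Lemma~\ref{l.kernel} depends on $t$ and on the subsequence $(n'_i)(t)$ chosen at $t$, so ``the full-measure set where the section avoids the hyperplane containing $\ker Q$'' is not an a priori, $t$-independent measurable condition: the prescription is circular, and this is precisely the difficulty the lemma is meant to overcome.

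The paper closes this gap by a different mechanism: it pulls $\ker Q$ back through the dynamics instead of pushing $\tilde\eta$ forward against an unknown hyperplane. Writing $A^{k}\big(z_k,(f^k_{z_k})^{-1}g(t)\big)^{-1}=\Theta_k\,A^{-k}(p,hg(t))\,\Theta$ with holonomy corrections $\Theta_k\to\id$ exponentially fast, the Oseledets theorem at $(\hp,hg(t))$ forces the backward images of \emph{any} hyperplane $\Hyp v$ to converge to one of the finitely many hyperplanes $\Hyp E$ determined by $(d-l)$-dimensional sums of Oseledets subspaces; consequently $\ker\tilde Q$ is asymptotically contained in $A^{\imath}\big(z,(f^\imath_z)^{-1}\phi g(t)\big)^{-1}\Hyp E(\phi g(t))$. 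Since this limiting family is finite and intrinsic (independent of $Q$, of the subsequence, and of $t$), the twisting hypothesis in its geometric form \eqref{eq.twistcond}, read through \eqref{eq.Vi1}, provides a single full-measure set -- after transporting the almost-everywhere conditions by the absolutely continuous maps $\phi$ and $g$, cf.\ Lemma~\ref{l.abs.cont.limit}, which your proposal also omits when defining $\tilde K$ -- on which $\eta\big((f^{\imath}_{z})^{-1}\phi g(t)\big)$, the limit of $\tilde\eta\big((f^{k_j+\imath}_{w})^{-1}g(t)\big)$, stays at positive distance from those sets, yielding $\tilde\eta\notin\ker\tilde Q$ for all large $j$. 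Without this contraction-of-hyperplanes step (or an equivalent replacement), your argument does not establish the pointwise kernel avoidance, which is the essential content of the lemma.
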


\begin{proof}
As before denote $h=h^u_{\hz,\hp}$ and $h_k=h^u_{\hz_k,\hp}$. We begin by constructing the sub-sequence $(k_j)_j$.
Note that $({f^{k+\imath}_{w}})^{-1}=\big({f^{\imath}_{w}}\big)^{-1}\big({f^{k}_{z_k}}\big)^{-1}$
and $\hw\to\hz$ when $k\to \infty$.
First, take a sub-sequence of values of $k$ such that $\big({f^{k}_{z_k}}\big)^{-1}$ converges uniformly to some $\phi$.
Since $h_k$ converges to the identity map, $(f_p^{k})^{-1}h=h_k(f_{z_k}^{k})^{-1}$  also converges uniformly to $\phi$.
Note that $\phi$ is absolutely continuous with respect to $\mu^c$, by Lemma~\ref{l.abs.cont.limit}. Recall that
$$
\tilde\eta\big((f^{k+\imath}_{w})^{-1}g(t)\big)
= H^{u}_{(\hp,\tilde{h}(({f^{k+\imath}_{w}})^{-1})),(\hw,({f^{k+\imath}_{w}})^{-1})} E^{I_1}_{\tilde{h}(({f^{k+\imath}_{w}})^{-1})}
$$
with $\tilde{h}(s) = h^{u}_{\hw,\hp}(s)$. Up to restricting the sub-sequence of values of $k$,
we may use Lemma~\ref{l.keylemma} to get that
\begin{equation}\label{conv.E}
\begin{aligned}
& \tilde\eta\big((f^{k+\imath}_{w})^{-1}g(t)\big) \to \eta\big(\big({f^{\imath}_{z}}\big)^{-1}\phi g(t)\big)
\text{ and }\\
& E((f_p^{k})^{-1} h g(t))\to E(\phi g(t)) \big)
\end{aligned}
\end{equation}
for every $t$ in some full $\mu^c$-measure set $K_1$.
This defines the sub-sequence $(k_j)_j$ in the statement.
In what follows, all the statements on $k$ are meant restricted to this sub-sequence.

The twisting condition implies that
\begin{equation}\label{eq.twistcond}
{\cH^{u}_{\hz,\hp}}{\cF^{\imath}_{z}} E^{I_1}_t \cap \big(E^{j_{l+1}}_t+\dots+E^{j_{d}}_t\big)=\{0\}
\end{equation}
for any $j_{l+1}, \dots, j_d \in \{1, \dots, d\}$ and a full $\mu^c$-measure set of values of $t\in K$.
In other words, $A^{\imath}(\eta(h(t))$ does not belong to any of the hyperplanes of $\grassl$
determined by the Oseledets decomposition at the point $(\hp,t)$. Since $\phi$ and $g$ are
absolutely continuous, there exists a full $\mu^c$-measure set $K_2$ of values of $t$ such that
\eqref{eq.twistcond} holds with $t$ replaced by $\phi(g(t))$.

Take $\tilde{K} = K_1 \cap K_2$.
Fix any $t\in \tilde{K}$ such that in addition $(\hp,hg(t))$ satisfies the conclusion of the Oseledets theorem.
Consider any sub-sequence $(n'_i)_i$ of $(n_j)_j$ such that $A^{n'_i}(x_{n'_i},t^{\hx}_{n'_i})$ converges
to some quasi-projective transformation $Q$ when $i\to\infty$. Then
$$
A^{n'_i}\big(x_{n'_i},t^{\hx}_{n'_i}\big)\circ A^{k+\imath}\big(y_{n'_i+k+\imath},t^{\hy}_{y_{n'_i+k+\imath}}\big)
$$
converges to $\tilde{Q}=Q\circ A^{k+\imath}\big(w,\big({f^{k+\imath}_w}\big)^{-1}g(t)\big)$ when $i\to\infty$.
Moreover,
$$
\begin{aligned}
\ker \tilde{Q}
& = A^{k+\imath} \big(w,\big({f^{k+\imath}_w}\big)^{-1}g(t)\big)^{-1}\ker Q\\
& = A^{\imath} \big(w,\big({f^{k+\imath}_w}\big)^{-1}g(t)\big)^{-1} A^{k}\big(z_{k},\big({f^{k}_{z_{k}}}\big)^{-1}g(t)\big)^{-1} \ker Q.
\end{aligned}
$$
Next, observe that
\begin{equation}\label{eq.TAT}
A^{k}\big(z_{k},\big({f^{k}_{z_{k}}}\big)^{-1}g(t)\big)^{-1}
= \Theta_k \, A^{-k}(p, hg(t)) \, \Theta
\end{equation}
where $h = h^{u}_{\hp,\hz}$ and
$$
\Theta=H^{u}_{(\hz,g(t)),(\hp,h g(t))} \text{ and }
\Theta_k = H^u_{(\hp,(f_p^k)^{-1}h g(t)),(\hz_k,(f_{z_k}^k)^{-1}(g(t))}.
$$

By Lemma~\ref{l.kernel}, the kernel of $Q$ is contained in some hyperplane $\Hyp v$ of $\grassl$.
Hence, $\Theta(\ker Q)$ is contained in the hyperplane $\Theta(\Hyp v)$, of course.
Since we take $t\in K$ to be such that the Oseledets theorem holds at $(\hp,t)$, the backward iterates
$A^{-k} (p,hg(t)) \Theta(\Hyp v)$ are exponentially asymptotic to some hyperplane section $\Hyp E$
that is defined by a $(d-l)$-dimensional sum $E$ of Oseledets subspaces.
This remains true for $\Theta_k A^{-k} (p,hg(t)) \Theta(\Hyp v)$ because $\Theta_k$ converges
exponentially fast to the identity map, since $\hz_k$ converges to $\hp$ exponentially fast.
In other words, using \eqref{eq.TAT},
\begin{equation*}
\dist_{\grassl} \big(A^{k}\big(z_{k},\big({f^{k}_{z_{k}}}\big)^{-1}g(t)\big)^{-1}\Hyp v, \Hyp E((f_p^{k})^{-1} h g(t)) \big)
\to 0
\end{equation*}
exponentially fast as $k\to \infty$. Then, by \eqref{conv.E}, we have that
$A^{k}\big(z_{k},\big({f^{k}_{z_{k}}}\big)^{-1}g(t)\big)^{-1}\Hyp v$ converges to $E(\phi g(t))$. So,
$$
\ker\tilde{Q}\subset  A^{\imath} \big(z,(f^\imath_z)^{-1}\phi g(t)\big)^{-1}\Hyp E(\phi g(t)).
$$

Keep in mind that $\hz\in W^u_{\loc}(\hp)$ and $\imath\in \natural$ is such that $\hsigma^{\imath}(\hz)\in W^s_{\loc}(\hp)$.
Recall also (from Section~\ref{ss.reduction}) that in the present setting all the local stable holonomies $h^s$ and $H^s$
are trivial. Define
\begin{equation*}
\begin{aligned}
V^i(t)
& = H^u_{(\hz,t_1),(\hp,t)} H^s_{(\hp,t_2),(\hz,t_1)} E^i(t_2)\\
& = H^u_{(\hz,t_1),(\hp,t)} \hA^{-\imath}(\sigma^{\imath}(\hz),s) H^s_{(\hp,s),(\hsigma^{\imath}(\hz),s)} \hA^{\imath}(\hp,t_2) E^i(t_2)\\
& = H^u_{(\hz,t_1),(\hp,t)} \hA^{-\imath}(\sigma^{\imath}(\hz),s) E^i(s)
\end{aligned}
\end{equation*}
with $t_1 = h^u_{\hp,\hz}(t)$, $t_2 = h^s_{\hz,\hp}(t_1)$ and $s = \hf_{\hp}^{\imath}(t_2) = \hf_{\hz}^{\imath}(t_1)$.
Then, by the twisting condition, $\oplus_{j\in J}V^j$ cannot intersect any sum of the form
$\oplus_{i \in I} E^i$ with $\# I + \# J =d$. In particular, the distance between
\begin{equation*}
H^u_{(\hz,t_1),(\hp,t)} \hA^{-\imath}(\hsigma^{\imath}(\hz),s) E(s)
\quand
E^{I_1}(s)
\end{equation*}
is positive. Equivalently, the distance between
\begin{equation*}
\hA^{-\imath}(\hsigma^{\imath}(\hz),s) E(s)
\quand
\eta(t_1) = H^u_{(\hp,t),(\hz,t_1)}E^{I_1}(s)
\end{equation*}
is positive. Then $\eta\big(\big({f^{\imath}_{z}}\big)^{-1}\phi g(t)\big)$ does not intersect
$$
\hA^{-\imath}(\hsigma^{\imath}(\hz),\big(\phi g(t)\big)) E\big(\phi g(t)\big)= A^{\imath} \big(z,(f^\imath_z)^{-1}\phi g(t)\big)^{-1} E(\phi g(t)),
$$
which implies that $\eta\big(\big({f^{\imath}_{z}}\big)^{-1}\phi g(t)\big)\notin\ker\tilde{Q}$.
\end{proof}

Having established Lemma~\ref{l.reduce}, we can now use the same argument as previously,
to conclude that $\hm_{\hx,t}=\delta_{\tilde{Q}\eta}$ at $\mu^c$-almost every point also in this case.
To do this, observe that for every $m$ and $k$ fixed there exist a sub-sequence $(m'_i)_i$
of $(m_j)_j$ such that
\begin{equation}\label{eq.convwm}
m_{y_{m'_i},t^{\hy}_{m'_i}}\to m_{w_{m},\big({f^{k+\imath+m}_{w_{m}}}\big)^{-1}g(t)}
\text{ for $\mu^c$-almost every $t$.}
\end{equation}
Using a diagonal argument, we may choose $(m'_i)_i$ to be independent of $k$ and $m$.
Fix, once and for all, a full $\mu^c$-measure subset $K'$ such that  \eqref{eq.convwm}
and the conclusions of Lemma~\ref{l.reduce} and Proposition~\ref{nova} (more precisely,
Remark~\ref{r.replace}) hold for every $t\in K'$.

For each fixed $t\in K'$, fixing $k$ sufficiently large and making $m'_i$ go to infinity
(along the sub-sequence given by Lemma~\ref{l.reduce}), we find that
$$
\hm_{\hx,t}=\tilde{Q}_{*}\big(A^m(w_m,(f^{k+\imath}_{w_m})^{-1}g(t)\big)_* m_{w_m,(f^{k+\imath}_{w_m})^{-1}g(t)}
$$
Then, making $m\to \infty$ and using Lemma~\ref{l.topology2} and Proposition~\ref{nova},
$$
\hm_{\hx,t}=\delta_{\xi(\hx,t)},
$$
where $\xi(\hx,t)=\tilde{Q}\tilde{\eta}((f^{k+\imath}_{w})^{-1}g(t)$.

Thus we proved that $\hm_{\hx,t}$ is a Dirac measure for $\hmuSigma$-almost every $\hx\in \hSigma$ and
$\hmu^c_{\hx}$-almost every $t\in K$. Note also that the set $\tilde{M}\subset \hM$ of points
$\hxt\in \hM$ such that $\hm_{\hxt}$ is a Dirac measure is measurable, since the map
$\hxt\mapsto \hm_{\hxt}$ is measurable and the set of Dirac measures is closed in the weak$^*$ topology
is closed, then  $\tilde{M}$ is measurable.
Thus we have shown that $\tilde{M}$ has total $\hmu$-measure, which completes the
proof of Theorem~\ref{t.Dirac}.


\section{Orthogonal complement}\label{s.orthogonal}

\subsection{Eccentricity}\label{ss.eccentricity}

Let $L: \field^d\to\field^d$ be a linear isomorphism and $1\leq l \leq d$.
The $l$-dimensional \emph{eccentricity} of $L$ is defined by
$$
E(l, L) = \sup \Big\{\frac{m(L\mid\xi)}{\norm{L\mid \xi^{\perp}}}: \xi\in \grassl\Big\},
\quad m(L\mid \xi) = \norm{(L \mid \xi)^{-1}}^{-1}.
$$
We call any $l$-subspace $\xi \in \grassl$ that realizes the supremum as \emph{most expanded $l$-subspace}.
These always exist, since the Grassmannian is compact and the expression depends continuously on $\xi$.

These notions may be expressed in terms of
the polar decomposition of $L=K' D K$ with respect to any orthonormal basis: denoting
by $a_1 ,\dots, a_d$ the eigenvalues of the diagonal operator $D$, in non-increasing order,
then $E(l, L) = a_l /a_{l+1}$. The supremum is realized by any subspace $\xi$ whose image
under $K$ is a sum of $l$-eigenspaces of $D$ such that the product of the eigenvalues is
$a_1 \cdots a_l$. It follows that $E(l, L)\geq 1$, and the most expanded $l$-subspace is unique
if and only if the eccentricity is strictly larger than $1$.

\begin{proposition}\label{p.exentricidad}
For every $0<c<1$, there exists a set $\hM_c\subset \hM$ with $\hmu(\hM_c)>c$ such that
$E\big(l,A^{n}(\hf^{-n}\hxt)\big)\to \infty$, and the image of the most expanded subspace
by $A^{n}(\hf^{-n}\hxt)$ converges to $\xi\hxt$, restricted to the iterates such that $\hf^{-n}\hxt\in \hM_c$
\end{proposition}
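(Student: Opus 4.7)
The idea is to deduce both statements from Theorem~\ref{t.Dirac} together with the weak$^*$ convergence of Proposition~\ref{p.limmesa}, after an Egorov-type localization to a large set.

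By Theorem~\ref{t.Dirac} the unique $u$-state $\hm$ satisfies $\hm_{\hxt}=\delta_{\xi(\hxt)}$ for $\hmu$-a.e. $\hxt$, and combining with Proposition~\ref{p.limmesa} one obtains
$$
A^n(q_n)_\ast m_{q_n}\longrightarrow \delta_{\xi(\hxt)}
$$
weakly, for $\hmu$-a.e. $\hxt$, where $q_n=(P\times\id_K)(\hf^{-n}\hxt)$. By Egorov's theorem together with inner regularity of $\hmu$, for each $c<1$ one can choose a set $\hM_c\subset\hM$ with $\hmu(\hM_c)>c$ on which this convergence is uniform and whose image under $P\times\id_K$ is precompact in $M$. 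By Poincar\'e recurrence, for $\hmu$-a.e. $\hxt$ the set of $n$ with $\hf^{-n}\hxt\in\hM_c$ is infinite, and along any such subsequence compactness allows one to extract common subsequential limits of the $q_n$, of the conditional measures $m_{q_n}$ (via Corollary~\ref{continuidad} and Proposition~\ref{p.conv}), and of the normalized matrices $A^n(q_n)/\|\Lambda^l A^n(q_n)\|$ regarded as quasi-projective maps on $\grassl$.

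The first assertion is then proved by contradiction. Assume $E(l,A^n(q_n))$ stays bounded along some such subsequence. Passing to a further subsequence, the corresponding quasi-projective limit $Q_\#$ on $\grassl$ has image containing more than one subspace, because $E(l,\cdot)$ being bounded prevents the top singular value of $\Lambda^l A^n(q_n)$ from strictly dominating the others. By Proposition~\ref{p.medidazero} the weak limit $m^\ast$ of $m_{q_n}$ assigns zero mass to every hyperplane $\Hyp V$, hence $m^\ast(\ker Q_\#)=0$, and Lemma~\ref{l.topology2} yields $A^n(q_n)_\ast m_{q_n}\to Q_\ast m^\ast$. This forces $Q_\ast m^\ast=\delta_{\xi(\hxt)}$, which is impossible since $Q_\#$ does not collapse its domain to a single subspace. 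Hence $E(l,A^n(q_n))\to\infty$. For the second assertion, once the eccentricity diverges the limit $Q_\#$ has image reduced to a single $l$-plane $\xi^\ast$ (the limit of the most expanded subspace under $A^n(q_n)$), and the identity $Q_\ast m^\ast=\delta_{\xi(\hxt)}$ forces $\xi^\ast=\xi(\hxt)$, which gives the stated convergence.

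The main obstacle is coordinating these subsequential limits so that Lemma~\ref{l.topology2} genuinely applies. One has to extract, along a common subsequence, both the weak limit $m^\ast$ of $m_{q_n}$ (combining the continuity of Corollary~\ref{continuidad} in the $x$-coordinate with the $L^1$-continuity of Proposition~\ref{p.conv} in the central coordinate $t$) and the quasi-projective limit of the normalized matrices, and then verify that $m^\ast(\ker Q_\#)=0$. The restriction to $\hM_c$, the bounded center derivatives, and the zero-mass property of Proposition~\ref{p.medidazero} are what make this extraction and the final contradiction possible.
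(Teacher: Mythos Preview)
Your overall strategy---pushing forward the conditional measures $m_{q_n}$ and arguing via quasi-projective limits---is sound, and the contradiction argument in your second paragraph is essentially the content of Proposition~\ref{p.eccentricity} (borrowed from \cite{AvV1}), which the paper simply cites as a black box. So in that respect you are reproving inline what the paper outsources.

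However, your construction of $\hM_c$ has a genuine gap. Egorov on the convergence $A^n(q_n)_\ast m_{q_n}\to\delta_{\xi(\hxt)}$ gives a set of points $\hxt$ on which the convergence is uniform \emph{in $n$}; but the statement asks for a property along those $n$ for which the \emph{backward orbit point} $\hf^{-n}\hxt$ lies in $\hM_c$. These are different constraints, and Egorov on $\hxt$ does not control the behaviour of $m_{q_n}$ when $\hf^{-n}\hxt\in\hM_c$. Relatedly, your extraction of the limit $m^\ast=\lim m_{q_n}$ via Corollary~\ref{continuidad} and Proposition~\ref{p.conv} does not work as stated: Corollary~\ref{continuidad} concerns the center-integrated measures $m_x$, and Proposition~\ref{p.conv} gives $m_{x_n,t}\to m_{x,t}$ only for $\mu^c$-a.e.\ \emph{fixed} $t$, whereas here both coordinates of $q_n=(x_n,t_n)$ move with $n$. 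The fix, which is exactly what the paper does, is to use Lusin's theorem on the measurable map $(x,t)\mapsto m_{(x,t)}$ to obtain a compact $M_c\subset M$ with $\mu(M_c)>c$ on which this map is continuous, and set $\hM_c=(P\times\id_K)^{-1}(M_c)$. Then whenever $\hf^{-n}\hxt\in\hM_c$ one has $q_n\in M_c$, so $m_{q_n}$ lies in the weak$^*$-compact family $\mathcal{N}=\{m_{(x,t)}:(x,t)\in M_c\}$, each of whose members gives zero mass to hyperplanes by Proposition~\ref{p.medidazero}. With this choice of $\hM_c$, your quasi-projective argument (equivalently, a direct application of Proposition~\ref{p.eccentricity}) goes through cleanly.
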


For the proof, let us recall the following fact, whose proof can be found in \cite{AvV1}:

\begin{proposition}\label{p.eccentricity}
Let $\mathcal{N}$ be a weak$^*$ compact family of probabilities on $\grassl$ such that all $\nu \in \mathcal{N}$
give zero weight to every hyperplane.
Let $L_n : \field^d \to \field^d$ be linear isomorphisms such that $(L_n)_∗ \nu_n$ converges to a Dirac
measure $\delta_{\xi}$ as $n \to\infty$, for some sequence $\nu_n$ in $\mathcal{N}$. Then the eccentricity $E(l, L_n )$
goes to infinity and the image $L_n (\zeta_n )$ of the most expanding $l$-subspace of $L_n$
converges to $\xi$.
\end{proposition}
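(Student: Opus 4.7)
My plan is to prove both conclusions simultaneously by a subsequence argument whose crux is a single rank statement about the limiting quasi-projective map. Suppose the conclusion fails: along some subsequence either $E(l,L_n)$ stays bounded, or $L_n(\zeta_n)$ stays at positive distance from $\xi$, where $\zeta_n$ is a most expanding $l$-subspace. I will extract a further subsequence along which every relevant object converges, identify the limit, and show it has rank exactly $l$, which contradicts both failure modes simultaneously.

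For the extraction, write the singular value decomposition $L_n = U_n D_n V_n^{*}$ with $D_n = \operatorname{diag}(a_1^{(n)}, \dots, a_d^{(n)})$ and $a_1^{(n)} \geq \cdots \geq a_d^{(n)} > 0$, and normalize $\tilde L_n = L_n/a_1^{(n)}$. Compactness of the unitary group and of $[0,1]$ permits passing to a subsequence with $U_n \to U$, $V_n \to V$ and $b_j^{(n)} := a_j^{(n)}/a_1^{(n)} \to b_j \in [0,1]$ for every $j$; then $\tilde L_n \to Q := U\operatorname{diag}(1,b_2,\dots,b_d)V^{*}$, and since $L_n$ and $\tilde L_n$ induce the same map on $\grassl$, also $L_{n,\#} \to Q_\#$ in the compact space of quasi-projective maps. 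Compactness of $\mathcal{N}$ lets us further assume $\nu_n \to \nu \in \mathcal{N}$. Set $r = \operatorname{rank}(Q)$, which equals the number of nonzero $b_j$. By Lemma~\ref{l.kernel}, $\ker Q_\#$ lies in some hyperplane $\Hyp V_0$, so the hypothesis on $\mathcal{N}$ gives $\nu(\ker Q_\#)=0$, and Lemma~\ref{l.topology2} yields $Q_\#\nu = \delta_\xi$.

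The key claim is $r=l$. Suppose $r > l$. Fix $\zeta_0 \in \grassl$ off $\ker Q_\#$ with $Q(\zeta_0) = \xi$, and set $W_0 := \zeta_0 + \ker Q$, which has dimension $l + (d-r) < d$. Since $Q$ is injective on any $\zeta$ off the kernel, $Q(\zeta) = \xi = Q(\zeta_0)$ forces $\zeta \subset Q^{-1}(\xi) = W_0$. Pick any hyperplane $H \supset W_0$ and any $V \in \grassd$ with $V \subset H$. Then $V + W_0 \subset H \neq \complex^d$, yielding
$$
\dim(V\cap W_0) = \dim V + \dim W_0 - \dim(V+W_0) \geq (d-l) + (l+d-r) - (d-1) = d-r+1.
$$
For any $l$-dimensional $\zeta \subset W_0$ we then have $\dim\zeta + \dim(V\cap W_0) > \dim W_0$, forcing $\zeta \cap V \neq \{0\}$, i.e. $\zeta \in \Hyp V$. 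Hence $Q_\#^{-1}(\xi) \setminus \ker Q_\# \subset \Hyp V$, giving $1 = \nu(Q_\#^{-1}(\xi)) \leq \nu(\Hyp V) = 0$, a contradiction.

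With $r = l$ established, $b_l > 0$ and $b_{l+1} = 0$ yield $E(l, L_n) = a_l^{(n)}/a_{l+1}^{(n)} = b_l^{(n)}/b_{l+1}^{(n)} \to \infty$ along the subsequence. Moreover $\operatorname{rank}(Q) = l$ means that for every $\zeta$ off the kernel, $Q(\zeta)$ is an $l$-subspace contained in the $l$-dimensional image of $Q$, hence equal to it; so $\xi = \operatorname{image}(Q) = U(\operatorname{span}\{e_1,\dots,e_l\})$. The most expanding subspace of $L_n$ is $\zeta_n = V_n(\operatorname{span}\{e_1,\dots,e_l\})$, and $L_n(\zeta_n) = U_n(\operatorname{span}\{e_1,\dots,e_l\}) \to U(\operatorname{span}\{e_1,\dots,e_l\}) = \xi$. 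Since every subsequence of $(L_n)$ admits a sub-subsequence with these conclusions, they hold along the full sequence. The main obstacle will be the geometric step in paragraph three: recognising that $\operatorname{rank}(Q) > l$ confines $Q_\#^{-1}(\xi)$ to a geometric hyperplane, via the codimension bound $\dim(V \cap W_0) \geq d-r+1$; the rest is compactness bookkeeping and SVD manipulation.
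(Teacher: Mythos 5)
Your argument hinges on the claim that $r=\operatorname{rank}(Q)=l$ for $Q=\lim_n L_n/a_1^{(n)}$, and that claim is false under the hypotheses; normalizing by the top singular value of $L_n$ on $\complex^d$ is the wrong normalization for the induced action on $\grassl$. Take $d=3$, $l=2$, $L_n=\operatorname{diag}(n^2,n,1)$ and $\nu_n=\nu$ a fixed smooth probability on $\grass(2,3)$: all hypotheses hold (the normalized maps $\Lambda^2L_n/(n^3)$ converge to a rank-one map, so $(L_n)_*\nu\to\delta_\xi$ with $\xi=\operatorname{span}(e_1,e_2)$, and $\nu$ charges no hyperplane), the conclusion of the proposition holds as well, yet $Q=\operatorname{diag}(1,0,0)$ has rank $1<l$. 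In that situation $\ker Q_\#=\{\zeta\in\grassl:\zeta\cap\ker Q\neq\{0\}\}$ is \emph{all} of $\grass(2,3)$, so it is not contained in any hyperplane, $\nu(\ker Q_\#)=1$, and neither Lemma~\ref{l.kernel} nor Lemma~\ref{l.topology2} can be invoked; moreover your assertion that $L_{n,\#}\to Q_\#$ as quasi-projective maps of $\grassl$ also needs $\operatorname{rank}Q\ge l$ (otherwise $\Lambda^l$ of your normalized maps tends to $0$ and the quasi-projective limit on $\grassl$ is not the map induced by $Q$). Your key-claim paragraph only excludes $r>l$; the case $r<l$ is never addressed, it genuinely occurs, and with it the final step ``$b_l>0$ and $b_{l+1}=0$'' collapses (in the example $b_l=0$). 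So this is a real gap, located exactly where the content of the proposition lies, and it cannot be patched while keeping the normalization by $a_1^{(n)}$.

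The repair is to run your confinement argument (which is the right kind of step) one level up: set $B_n=\Lambda^lL_n/(a_1^{(n)}\cdots a_l^{(n)})$, extract a limit $B\neq0$ acting on $\exteriorl$, use the zero-weight hypothesis to push $\nu$ through the limit quasi-projective map of $\grassl\subset\projective\exteriorl$ (its kernel is now a proper subset, contained in a hyperplane), and then show--by an argument of the same flavour as your third paragraph--that if $B$ had rank $\ge2$ the full-measure preimage of $\xi$ would be confined to a hyperplane section, a contradiction. Rank one of $B$ is precisely $a_{l+1}^{(n)}/a_l^{(n)}\to0$, i.e. $E(l,L_n)\to\infty$, and the image of $B$ identifies $\xi$ as the limit of $L_n(\zeta_n)=U_n(\operatorname{span}(e_1,\dots,e_l))$. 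This is the argument of \cite{AvV1}; note that the paper itself gives no proof of this proposition but simply cites that reference.
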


\begin{proof}[Proof of Proposition~\ref{p.exentricidad}]
 Given $0<c<1$ take $M_c\subset M$ to be a compact set, with $\mu(M_c)>c$ and such that the restriction
 of the map $(x,t)\mapsto m_{(x,t)}$ to $M_c$ is continuous.
 This implies that
 $$
 \mathcal{N} =\{ m_{(x,t)};(x,t)\in M_c\}
 $$
 is a weak$^*$ compact subset of the space of probability measures of $\grassl$,
 and every measure in $\mathcal{N}$ gives zero weight to every hyperplane.
 Moreover,
 $$
 A^{n}(\hf^{-n}\hxt)_* m_{P\times \id(\hf^{-n}\hxt)}=\delta_{\xi\hxt}.
 $$
 Take $\hM_c=(P\times \id)^{-1}(M_c)$.
 Then the claim follows from Proposition~\ref{p.eccentricity}, with $L_n=A^{n}(\hf^{-n}\hxt)$.
\end{proof}


\subsection{Adjoint cocycle}\label{ss.adjoint}

Fix any continuous Hermitian form $\ip{\cdot\, ,\cdot \,}_{\hxt}$ in $\hM \times \field^d$.
Let $\hF^*:\hM\times \field^d\to \hM\times \field^d$ be the adjoint cocycle,
defined over $\hf^{-1}:\hM\to \hM$ by
$$
\hF^*(\hxt,v) = (\hf^{-1}\hxt,\adj\hxt v)
$$
where $\adj\hxt$ is the adjoint $\hA(\hf^{-1}\hxt)^{\ast}$ of the matrix
$\hA(\hf^{-1}\hxt)$ with respect to the Hermitian form. In other words,
$\adj\hxt$ is characterized by
$$
\ip{u,\adj\hxt v}_{\hf^{-1}\hxt} = \ip{\hA(\hf^{-1}\hxt u),v}_{\hxt}
\quad\text{for any $u, v \in \field^d$}
$$
and $\hxt\in \hM$.
We have that
$$
W^{ss}_{\hf^{-1}}\hxt=W^{uu}_{\hf}\hxt
\quand
W^{uu}_{\hf^{-1}}\hxt=W^{ss}_{\hf}\hxt.
$$
It is also easy to see that
\begin{equation*}
H^{u,\adj}_{\hxt,\hys}=(H^{s,\hA}_{\hys,\hxt})^{\ast}
\text{ and }
H^{s,\adj}_{\hxt,\hzr}=(H^{u,\hA}_{(\hzr),\hxt})^{\ast},
\end{equation*}
respectively, for any $\hxt,\hys$ in the same $\hf^{-1}$-unstable set and any
$\hxt,\hzr$ in the same $\hf^{-1}$-stable set.

The following fact is well known (see \cite[Proposition~2.7]{Via07} for a similar result):

%

\begin{proposition}\label{p.lyap_adjoint}
The cocycles $\hF$ and $\hF^*$ have the same Lyapunov exponents. Moreover, if $E^j$, $j=1, \dots, k$
are the Oseledets spaces of $\hF$ then the Oseledets spaces of $\hF^*$ are, respectively,
$$
E^j_{\ast}=\left[E^1\oplus \cdots \oplus E^{j-1}\oplus E^{j+1} \oplus \cdots \oplus E^k\right]^{\perp},
j=1, \dots, k.
$$
\end{proposition}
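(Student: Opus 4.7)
The plan is to reduce both assertions to a single computation of the iterates of the adjoint cocycle, together with a duality argument using the Hermitian pairing.

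First I would compute, directly from the cocycle relation and the definition $\hat{A}_*(\hat{y}) = \hat{A}(\hat{f}^{-1}(\hat{y}))^*$, that
\begin{equation*}
\hat{A}_*^n(\hat{y}) = \hat{A}(\hat{f}^{-n}(\hat{y}))^* \cdots \hat{A}(\hat{f}^{-1}(\hat{y}))^* = \big(\hat{A}^n(\hat{f}^{-n}(\hat{y}))\big)^*
\end{equation*}
for every $n\ge 1$. In particular, $\hat{A}_*^n(\hat{y})$ and $\hat{A}^n(\hat{f}^{-n}(\hat{y}))$ have the same singular values, and $\log\|\hat{A}_*^{\pm 1}\|$ is integrable because $\hmu$ is $\hf$-invariant and $\log\|\hat{A}^{\pm 1}\|$ is integrable.

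For the equality of the Lyapunov spectra, I would apply Kingman's subadditive ergodic theorem to the sequences $\log\|\Lambda^k\hat{A}_*^n\|$ and $\log\|\Lambda^k\hat{A}^n\|$. Since the top singular value of $\Lambda^k B$ equals the product of the top $k$ singular values of $B$, and $\hat{A}_*^n(\hat{y})$ has the same singular values as $\hat{A}^n(\hat{f}^{-n}(\hat{y}))$, the subadditive limits coincide $\hmu$-almost everywhere, using that $\hmu$ is ergodic for both $\hf$ and $\hf^{-1}$. This yields the sum of the top $k$ Lyapunov exponents of $\hF^*$ is the same as that for $\hF$, and so all exponents agree with the same multiplicities.

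To identify the Oseledets decomposition, write $\hat{E}^j(\hy)=E^1(\hy)\oplus\cdots\oplus E^{j-1}(\hy)\oplus E^{j+1}(\hy)\oplus\cdots\oplus E^k(\hy)$. The key identity, obtained by iterating the definition of the adjoint, is
\begin{equation*}
\langle u,\hat{A}_*^n(\hy)v\rangle_{\hf^{-n}(\hy)} = \langle \hat{A}^n(\hf^{-n}(\hy))u,v\rangle_{\hy}.
\end{equation*}
Taking the supremum over unit vectors $u$ and substituting $w=\hat{A}^n(\hf^{-n}(\hy))u$ gives
\begin{equation*}
\|\hat{A}_*^n(\hy)v\| = \sup_{w\neq 0}\frac{|\langle w,v\rangle|}{\|\hat{A}^{-n}(\hy)w\|}.
\end{equation*}
For $v\in(\hat{E}^j(\hy))^\perp$, decomposing $w=\sum_i w_i$ with $w_i\in E^i(\hy)$ kills every term except $\langle w_j,v\rangle$. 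Using the Oseledets estimates $\|\hat{A}^{-n}(\hy)w_j\|=e^{-n\lambda_j+o(n)}\|w_j\|$ and the subexponential decay of the angles between Oseledets subspaces along the orbit, one obtains $\|\hat{A}^{-n}(\hy)w\|\ge e^{-o(n)}\|\hat{A}^{-n}(\hy)w_j\|$. Combining with the lower bound obtained by testing only on $w\in E^j(\hy)$ (which gives a non-trivial functional on $E^j(\hy)$ because $v\in(\hat{E}^j)^\perp\setminus\{0\}$ cannot be orthogonal to a complement of $\hat{E}^j$), one concludes $\frac{1}{n}\log\|\hat{A}_*^n(\hy)v\|\to\lambda_j$. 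Since $\dim(\hat{E}^j)^\perp=\dim E^j=\dim E^j_*$, the inclusion $(\hat{E}^j)^\perp\subset E^j_*$ must be an equality.

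The main technical point will be controlling the cross-terms in the non-orthogonal Oseledets decomposition when bounding $\|\hat{A}^{-n}(\hy)w\|$ from below by $\|\hat{A}^{-n}(\hy)w_j\|$; this relies on the standard fact, built into the regularity part of the Oseledets theorem, that the angles between distinct Oseledets subspaces decay at most subexponentially along $\hmu$-a.e. orbit. Everything else is either direct substitution or a dimension count.
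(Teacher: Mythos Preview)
Your proof is correct and proceeds along a somewhat different path from the paper's. The paper (restricting explicitly to the case $\dim E^j=1$, which is all that is needed later) does not separate the two assertions: it fixes unit vectors $e^j\in E^j$ and $e^j_*\in E^j_*$, first checks invariance of $E^j_*$ directly from the adjoint relation, and then computes the single pairing $\langle e^j(\hf^{-n}(\hy)),\hA_*^n(\hy)e^j_*(\hy)\rangle$ in two ways to get $\lim_n\frac{1}{n}\log\|\hA_*^n(\hy)e^j_*(\hy)\|=\lambda_j$. Equality of the spectra then comes out of this same calculation rather than from a separate Kingman/singular-value argument. Your approach instead first settles the spectra via singular values and the subadditive ergodic theorem, and then identifies the Oseledets spaces through the dual-norm identity $\|\hA_*^n(\hy)v\|=\sup_{w\ne 0}|\langle w,v\rangle|/\|\hA^{-n}(\hy)w\|$; this has the advantage of handling arbitrary multiplicities without modification, whereas the paper's computation is written for one-dimensional subspaces and leaves the general case to the reader. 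Both proofs ultimately lean on the same Oseledets regularity fact---subexponential decay of the angles between the Oseledets subspaces along orbits---so the technical cores are the same; the difference is organizational. One small point worth making explicit in your write-up: since the Hermitian form varies with the base point, the norms on either side of your dual-norm identity are taken at different fibers, but continuity of the form ensures this does not affect the exponential asymptotics.
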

\begin{proposition}\label{p.twist.adj}
$\hA$ is simple, if and only if, $\adj$ is simple.
\end{proposition}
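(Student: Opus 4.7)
The plan is to verify that each of the two simplicity conditions is preserved under the adjoint; since $(\adj)^{\,\adj}\cong\hA$ via the Hermitian form, proving one direction of each condition suffices. Pinching is immediate from Proposition~\ref{p.lyap_adjoint}: $\hF$ and $\adj$ have the same Lyapunov exponents $\lambda_1,\ldots,\lambda_d$ on $\ell = \{\hp\}\times K$ (the measure $\hmu^c_{\hp}$ is preserved by both $f_p$ and $f_p^{-1}$), so the distinctness of the partial sums $\lambda_{i_1}+\cdots+\lambda_{i_k}$ transfers verbatim.

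For twisting I take as homoclinic data for $\adj$ over $\hf^{-1}$ the reflected pair $\hz_0 := \hsigma^{\imath}(\hz)$ and $\imath_0 := \imath$; this is admissible since $W^{uu}_{\hf^{-1}}(\hp) = W^{ss}_{\hf}(\hp)$ contains $\hz_0$ and $W^{ss}_{\hf^{-1}}(\hp) = W^{uu}_{\hf}(\hp)$ contains $\hsigma^{-\imath_0}(\hz_0) = \hz$. Combining the basic adjoint relations of Section~\ref{ss.adjoint},
\[
H^{u,\adj}_{\hxt,\hys} = (H^{s,\hA}_{\hys,\hxt})^{*}, \quad H^{s,\adj}_{\hxt,\hzr} = (H^{u,\hA}_{\hzr,\hxt})^{*}, \quad \adj^{\,-\imath}(\hz_0) = \hA^{-\imath}(\hz_0)^{*},
\]
with the rule $(ABC)^{*} = C^{*}B^{*}A^{*}$, I will verify that the composed transport
\[
\tilde T \;:=\; \cH^{u,\adj}_{\hz_0,\hp}\circ \cF^{\adj,-\imath}_{\hz_0}\circ \cH^{s,\adj}_{\hp,\hz}
\]
defining $V^i_*$ is, fiberwise, the Hermitian adjoint of the transport $T := \cH^{u}_{\hz,\hp}\circ \cF^{-\imath}_{\hz}\circ \cH^{s}_{\hp,\hz_0}$ defining $V^i$. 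This relies on the observation that the two loops induce mutually inverse center-coordinate maps on $K$, namely $s := h^s_{\hz_0,\hp}\circ \hf^{\imath}_{\hz}\circ h^u_{\hp,\hz}$ for the original and $s^{-1}$ for the adjoint.

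Taking an Oseledets frame $\{e^j(t)\}$ of $\hF|\ell$ and its Hermitian dual $\{e^j_*(t)\}$—which, by Proposition~\ref{p.lyap_adjoint}, spans the Oseledets subspace $E^j_*(t)$ of $\adj|\ell$—a direct computation from $\ip{e^j,T^{*}e^i_*}=\ip{Te^j,e^i_*}$ together with $Te^j = V^j = \sum_k \beta_{jk}e^k$ yields $\tilde B(s(\tau)) = B(\tau)^{*}$, and hence
\[
|m_{I,J}(\tilde B(s(\tau)))| = |m_{J,I}(B(\tau))|
\]
for every pair of index sets with $|I|=|J|$, where $\tilde B = (\tilde\beta_{ij})$ is the coefficient matrix of $V^i_*$ in the basis $\{e^j_*\}$. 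The twisting condition for $\adj$ asks $\tfrac{1}{n}\log|m_{I,J}(\tilde B(f_p^{-n}t))|\to 0$ at $\hmu^c_{\hp}$-a.e.\ $t$ (the base map at $\hp$ for $\hf^{-1}$ is $f_p^{-1}$); via the identity $\tilde B(t) = B(s^{-1}(t))^{*}$ this becomes the sub-exponential decay of $|m_{J,I}(B)|$ along the conjugate backward orbit $\tilde f^{-n}(s^{-1}(t))$ of $\tilde f := s^{-1}\circ f_p\circ s$. Since $s$ preserves $\hmu^c_{\hp}$ (as a composition of factors that do), $\tilde f$ is measure-theoretically conjugate to $f_p$, and the Birkhoff-coboundary criterion noted just after \eqref{eq.suffar} (automatic under uniform twisting, and the sufficient condition assumed in the non-uniform case) transfers the forward sub-exponential decay along $f_p$-orbits to the required backward decay along $\tilde f$-orbits. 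The reverse implication is identical, using $(\adj)^{\,\adj}\cong\hA$.

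The main technical obstacle I anticipate is the center-coordinate bookkeeping underlying the fiberwise identity $\tilde T(s(\tau)) = T(\tau)^{*}$—verifying that the adjoint loop really does induce the inverse center map of the original loop, and carefully tracking the various holonomy base-point arguments inside each factor. Once this identification is in place, the remainder reduces to elementary linear algebra and one application of Birkhoff's ergodic theorem.
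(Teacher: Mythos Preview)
Your approach is essentially the same as the paper's: reduce pinching to Proposition~\ref{p.lyap_adjoint}, then show that the transport matrix for $\adj$ is, up to sub-exponential factors, the conjugate transpose of the one for $\hA$, so that $|m_{I,J}(\tilde B)|$ and $|m_{J,I}(B)|$ have the same sub-exponential behaviour. The paper carries this out with unit vectors $e^j\in E^j$, $e^j_*\in E^j_*$, which introduces the angle factors $\cos\alpha^j(t)=\ip{e^j_*(t),e^j(t)}$; these are handled by the Oseledets estimate $\tfrac{1}{n}\log|\cos\alpha^j(g^n t)|\to 0$. Your use of the Hermitian dual frame absorbs these factors into the normalisation and yields the cleaner identity $\tilde B=B^*$ directly, but you should note that the dual vectors then have norm $1/\cos\alpha^j$, so the sub-exponentiality of minors in your basis is equivalent to the paper's condition for the same Oseledets reason.

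One point you treat more carefully than the paper: for $\adj$ the base map on $\ell$ is $f_p^{-1}$, so the twisting condition~\eqref{eq.suffar} must be checked along \emph{backward} $f_p$-orbits, whereas the computation naturally produces decay along forward orbits. The paper's proof simply writes $g^n$ on both sides without comment. Your appeal to the coboundary criterion after~\eqref{eq.suffar} is the right idea, but be aware that this criterion is stated there as a \emph{sufficient} condition, not as part of the hypothesis; so your argument for the non-uniform case implicitly assumes the integrability of $\log|m_{I,J}|\circ f_p-\log|m_{I,J}|$ rather than just~\eqref{eq.suffar} itself. Under uniform twisting (or this integrability), both directions follow and your argument is complete.
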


\begin{proof}
Applying Proposition~\ref{p.lyap_adjoint} to the restriction of $\hF$ to the periodic leaf
$\{\hp\}\times K$ we get that the cocycle $\adj$ is pinching, if and only if, $\hA$ is pinching.
Moreover, the Oseledets decomposition $E^1_* \oplus \cdots \oplus E_*^k$ of $\adj$
is given by the orthogonal complements of the Oseledets subspaces of $A$:
$$
E^j_{\ast}=\left[E^1\oplus \cdots \oplus E^{j-1} \oplus E^{j+1} \oplus \cdots \oplus E^k\right]^{\perp}.
$$
We are going to use this for proving the twisting property, as follows.

Let $\phi_{\hp,\hz}=\cH^{u,\hA}_{\hz,\hp}\circ \cH^{s,\hA}_{\hp,\hz}$ and
$\phi^*_{\hp,\hz}=\cH^{u,\adj}_{\hz,\hp}\circ \cH^{s,\adj}_{\hp,\hz}$.
Denote
$$
\begin{aligned}
& h : K \to K, \quad h(t)=h^u_{\hz,\hp}\circ h^s_{\hp,\hz} \quad\text{and}\\
& H_{t} = H^u_{(\hz,h^u_{\hp,\hz}(t)),(\hp,t)}\circ H^s_{(\hp,h^{-1}(t)),(\hz,h^u_{\hp,\hz}(t))},
\end{aligned}
$$
Then, for any $V\in\sectl$,
\begin{equation*}
 \phi_{\hp,\hz}V(t)=H_{t}\left(V(h^{-1}(t)\right)
\quand
\phi^{\ast}_{\hp,\hz}V(t)={H_{h(t)}}^{\ast}\left(V(h(t)\right).
\end{equation*}

First, we treat the case $l=1$. Define measurably for (almost) every $t\in K$ a linear base
of unit vectors $e^j(t)\in E^{j}(t),\,j=1,\dots,d$. The twisting condition means that if
$$
(\phi_{\hp,\hz}e^k)(t)=\sum_{j=1}^{d} a_{k,j}(t)e^{j}(t),
$$
then
$$
\limlog\lvert a_{k,j}(f_{\hp}^n(t))\rvert=0.
$$
We need to deduce the corresponding fact for the adjoint. For this, write
$$
(\phi^{\ast}_{\hp,\hz}e_{\ast}^k)(t)=\sum_{j=1}^{d} \beta_{k,j}(t)e_{\ast}^{j}(t).
$$
Hence
\begin{eqnarray*}
\beta_{k,j}(t)\ip{e_{\ast}^{j}(t),e^j(t)} &=& \ip{\phi^{\ast}_{\hp,\hz}e_{\ast}^k(t),e^j(t)}\\
									&=& \ip{e_{\ast}^k(h(t)),\phi_{\hp,\hz}e^j(h(t))}\\
									&=& \overline{a_{j,k}(h(t))}\ip{e_{\ast}^{k}(h(t)),e^k(h(t))},
\end{eqnarray*}
by definition $\ip{e_{\ast}^{i}(t),e^i(t)}=\cos(\alpha^i(x))$ for every $1\leq i\leq d$,
where \begin{eqnarray*}
 \alpha^i(t)&=&\measuredangle\left(e^i_{\ast}(t),e^i(t)\right)\\
 &=&\frac{\pi}{2}-\measuredangle\left(e^i(t),E^1_t\oplus \cdots \oplus \hat{E^{i}}_t\oplus \cdots \oplus E^k_t\right),
\end{eqnarray*}
so, by the Oseledets theorem,
$$
\limlog \lvert \ip{e_{\ast}^{i}(f_{\hp}^{n}(t)),e^i(f_{\hp}^{n}(t))}\rvert=0
\text{ $\hmu^c_{\hp}$-almost everywhere.}
$$
Then
\begin{equation*}
 \limlog \lvert\beta_{k,j}(f_{\hp}^{n}(t)) \rvert=\limlog \lvert a_{j,k}(f_{\hp}^{n}(h(t)))\rvert
\end{equation*}
also as $h:K\to K$ preserves $\hmu^{c}_{\hp}$ this is true for $\hmu^c_{\hp}$-almost everywhere.

For $l>1$ the proof is just the same, using the inner product induced on $\exteriorl$ by $\ip{\cdot,\cdot}$,
that is,
\begin{equation*}
\ip{v_1\wedge \cdots \wedge v_{l},w_1\wedge \cdots \wedge w_{l}}_{\exteriorl}=\det\left(\ip{v_i,w_j}\right).
\end{equation*}
Thus, we have shown that $A$ is twisting if and only if $A_{\ast}$ is twisting.
\end{proof}

Applying Proposition~\ref{p.exentricidad} to the adjoint cocycle we get:

\begin{corollary}\label{c.xiast}
There exists a section $\xi^{\ast}:\hM\to Grass\left(l,d\right)$ which is invariant under the cocycle $F_{\adj}$
and the unstable linear holonomies of $\adj$.

Moreover, given any $c>0$ there exists $\hM_c\subset \hM$ with $\hmu(\hM_c)>c$ such that,
restricted to the sub-sequence of iterates $k$ such that $\hf^k(p)$ in $\hM_c$, the eccentricity
$E\big(l,\adj^{k}(\hf^k(p))\big)=E\big( l,A^{k}(p) \big)$ goes to infinity and the
image $\adj^{k}\big(\hf^k(p)\big)\zeta^a_{k}(\hf^k(p))$ of the most expanded $l$-subspace tends to
$\xi^{\ast}(p)$ as $k\to\infty$.
\end{corollary}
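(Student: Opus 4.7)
The plan is to apply Theorem~\ref{t.Dirac} and Proposition~\ref{p.exentricidad} to the adjoint cocycle $\hF^{\ast}$ viewed as a cocycle over the inverse base map $\hf^{-1}$. The key observation is that all the structural data required by the arguments of Sections~\ref{s.ustates}--\ref{s.dirac} is preserved when one passes to the adjoint: the strong-stable and strong-unstable sets swap roles under $\hf\mapsto\hf^{-1}$, the holonomies $H^{u,\adj}$ and $H^{s,\adj}$ are the adjoints of $H^{s,\hA}$ and $H^{u,\hA}$ (as recorded in Section~\ref{ss.adjoint}), and the $\hf$-invariant measure $\hmu$ is also $\hf^{-1}$-invariant with the same partial product structure. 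By Proposition~\ref{p.twist.adj}, the simplicity assumption on $\hA$ transfers to $\adj$, so the hypotheses of Theorem~\ref{t.Dirac} are satisfied by $\hF^{\ast}$.

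First I would invoke Theorem~\ref{t.Dirac} applied to $\hF^{\ast}$ to obtain a measurable section $\xi^{\ast}:\hM\to\grassl$ such that any $u$-state of $\hF^{\ast}$ (in the sense appropriate to $\hf^{-1}$, that is, with disintegrations invariant under $H^{u,\adj}$) has Rokhlin disintegration $\hm_{\hxt}=\delta_{\xi^{\ast}(\hxt)}$ for $\hmu$-almost every $\hxt$. The $\hF^{\ast}$-invariance of this unique $u$-state and the $H^{u,\adj}$-invariance of its disintegration translate directly into the $F_{\adj}$-invariance of $\xi^{\ast}$ and its invariance under the unstable holonomies of $\adj$, which is the first half of the corollary.

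For the eccentricity statement I would apply Proposition~\ref{p.exentricidad} to the adjoint cocycle at the base point $p$. Since the base dynamics for $\hF^{\ast}$ is $\hf^{-1}$, the conclusion of Proposition~\ref{p.exentricidad} at a point $\hxt\in\hM$ concerns the iterates $\adj^{n}((\hf^{-1})^{-n}\hxt)=\adj^{n}(\hf^{n}\hxt)$. Taking $\hxt=p$, this yields a set $\hM_c\subset\hM$ with $\hmu(\hM_c)>c$ such that, along the subsequence of $k$ for which $\hf^{k}(p)\in\hM_c$, the eccentricity $E\bigl(l,\adj^{k}(\hf^{k}(p))\bigr)$ tends to infinity and the image $\adj^{k}(\hf^{k}(p))\zeta^{a}_{k}(\hf^{k}(p))$ of the most expanded $l$-subspace converges to $\xi^{\ast}(p)$. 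The stated equality $E(l,\adj^{k}(\hf^{k}(p)))=E(l,A^{k}(p))$ is then the elementary linear-algebra fact that a linear isomorphism and its Hermitian adjoint have the same singular values, hence the same $l$-dimensional eccentricity for every $l$.

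There is no substantive analytic obstacle: essentially all the work has been done in the preceding sections, and what remains is bookkeeping — reversing the time direction, swapping the roles of strong-stable and strong-unstable objects, and applying the adjoint identities from Section~\ref{ss.adjoint}. The only point worth checking carefully is that the notion of $u$-state for the time-reversed system $(\hf^{-1},\hF^{\ast})$ matches the invariance needed so that $\xi^{\ast}$ plays, for $\adj$, the role that $\xi$ plays for $\hA$ in Section~\ref{s.teoB}; this is precisely why Proposition~\ref{p.twist.adj} is formulated as it is.
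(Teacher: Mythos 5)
Your proposal is correct and follows essentially the same route as the paper: the corollary is obtained by applying Theorem~\ref{t.Dirac} and Proposition~\ref{p.exentricidad} to the adjoint cocycle $\hF^{\ast}$ over $\hf^{-1}$, with Proposition~\ref{p.twist.adj} guaranteeing the hypotheses and the time-reversal/holonomy-swap bookkeeping handled exactly as you describe (including the identity $\adj^{k}(\hf^{k}(p))=(\hA^{k}(p))^{\ast}$, which gives the equality of eccentricities).
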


The next lemma relates the invariant sections of the two cocycles, $F$ and $F_{\adj}$:

\begin{lemma}\label{l.ortogonal}
For $\hmu$-almost every $\hx,t\subset \hM$, the subspace $\xi(\hx,t)$ is transverse to the
orthogonal complement of $\xi^{\ast}(\hx,t)$.
\end{lemma}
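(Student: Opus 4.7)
My plan is to apply Proposition~\ref{p.exentricidad} simultaneously to $\hF$ and to the adjoint cocycle $\hF^{\ast}$ at the same point, which will identify both $\xi$ and $(\xi^{\ast})^{\perp}$ as limits of explicit singular subspaces at $(\hx,t)$. Fix a $\hmu$-generic $(\hx,t)$; by ergodicity of $\hmu$ under $\hf$, I can choose a single sequence $n_k\to\infty$ along which $\hf^{-n_k}(\hx,t)\in\hM_c$ (for Proposition~\ref{p.exentricidad} applied to $\hF$) and $\hf^{n_k}(\hx,t)\in\hM^{\ast}_c$ (for the corollary obtained by applying Proposition~\ref{p.exentricidad} to $\hF^{\ast}$). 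Writing the singular value decompositions $\hA^{n_k}(\hf^{-n_k}(\hx,t))=U_k\Sigma_kV_k^{\ast}$ and $\hA^{n_k}(\hx,t)=\tilde U_k\tilde\Sigma_k\tilde V_k^{\ast}$, and using the identity $\adj^{n_k}(\hf^{n_k}(\hx,t))=(\hA^{n_k}(\hx,t))^{\ast}$ together with the elementary observation that the image under $L^{\ast}$ of its top-$l$ right singular subspace equals the top-$l$ right singular subspace of $L$, the two conclusions of Proposition~\ref{p.exentricidad} translate to $\xi(\hx,t)=\lim_kU_k\Pi^{+}$ and $\xi^{\ast}(\hx,t)=\lim_k\tilde V_k\Pi^{+}$, where $\Pi^{+}=\operatorname{span}(e_1,\dots,e_l)$, with both eccentricities $\sigma_l/\sigma_{l+1}$ tending to infinity. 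In particular $(\xi^{\ast}(\hx,t))^{\perp}$ equals the limit of the bottom $(d-l)$-dimensional right singular subspaces of $\hA^{n_k}(\hx,t)$.

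I will then prove the transversality by contradiction. Suppose a nonzero $v\in\xi(\hx,t)\cap(\xi^{\ast}(\hx,t))^{\perp}$ existed. The membership $v\in(\xi^{\ast}(\hx,t))^{\perp}$ gives, using the SVD of $M_k:=\hA^{n_k}(\hx,t)$, the norm estimate $\|M_kv\|\le(\sigma_{l+1}(M_k)+o(1))\|v\|$. On the other hand, the $\hF$-invariance $M_k\xi(\hx,t)=\xi(\hf^{n_k}(\hx,t))$, combined with a finite-time version of Proposition~\ref{p.exentricidad} identifying $\xi(\hf^{n_k}(\hx,t))$ with the top-$l$ left singular subspace of $M_k$ up to an $o(1)$ error (exploiting the large eccentricity of $M_k$), yields the competing lower bound $\|M_kv\|\ge c\,(\sigma_l(M_k)-o(1))\|v\|$ for a uniform constant $c>0$. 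Combined with $\sigma_l(M_k)/\sigma_{l+1}(M_k)\to\infty$ (coming from the adjoint application of Proposition~\ref{p.exentricidad}), these two bounds force $v=0$, the desired contradiction.

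The main obstacle is justifying the auxiliary finite-time estimate that $\xi(\hf^{n_k}(\hx,t))$ coincides with the top-$l$ left singular subspace of $M_k$ up to $o(1)$, uniformly in $k$. I expect to derive this from the quasi-projective action of $M_k$ on $\grassl$ under large eccentricity (compare Proposition~\ref{p.eccentricity}) together with Proposition~\ref{p.medidazero}: large eccentricity forces $M_k$ to act on $\grassl$ almost as a quasi-projective map sending generic subspaces onto the top-$l$ left singular subspace, and the non-atomicity of the disintegration $m_{\hf^{n_k}(\hx,t)}$ along hyperplanes forces the pushforward to concentrate near the Dirac mass on that subspace, which by Theorem~\ref{t.Dirac} and uniqueness of the Dirac $u$-state must equal $\xi(\hf^{n_k}(\hx,t))$.
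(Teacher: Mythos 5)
Your strategy (identify $\xi$ and $(\xi^{\ast})^{\perp}$ with limits of left/right singular subspaces and derive a norm contradiction) is genuinely different from the paper's, but as written it does not close, and the central estimate is where it fails. From $v\in(\xi^{\ast}(\hx,t))^{\perp}$ and the convergence of the bottom $(d-l)$-dimensional right singular subspace of $M_k=\hA^{n_k}(\hx,t)$ to $(\xi^{\ast}(\hx,t))^{\perp}$, the correct upper bound is $\|M_k v\|\le \sigma_{l+1}(M_k)\|v\|+\epsilon_k\,\sigma_1(M_k)\|v\|$, where $\epsilon_k\to 0$ is the Grassmannian distance between the two subspaces — not $(\sigma_{l+1}(M_k)+o(1))\|v\|$. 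The corollary of Proposition~\ref{p.exentricidad} gives no rate for $\epsilon_k$, while $\sigma_1(M_k)/\sigma_l(M_k)$ may grow exponentially (simplicity is not yet known at this stage), so $\epsilon_k\sigma_1(M_k)$ need not be $o(\sigma_l(M_k))$ and the upper bound cannot be played against any lower bound of order $\sigma_l(M_k)$. Worse, the claimed lower bound $\|M_k v\|\ge c(\sigma_l(M_k)-o(1))\|v\|$ for $v\in\xi(\hx,t)$ does not follow from $M_k\xi(\hx,t)=\xi(\hf^{n_k}(\hx,t))$ being close to the top-$l$ left singular subspace: containment of the image in a subspace close to $U^{(l)}_k$ does not bound the co-norm of $M_k\restriction\xi(\hx,t)$ from below. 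What that lower bound really requires is that $\xi(\hx,t)$ be uniformly transverse to the bottom right singular subspaces of $M_k$, which converge to $(\xi^{\ast}(\hx,t))^{\perp}$ — i.e.\ it presupposes exactly the transversality the lemma asserts. A vector $v$ lying in both spaces is compatible with all your qualitative limits: $v$ splits as $a_k+b_k$ with $\|a_k\|\to 0$, and the constraint coming from $M_kv$ being nearly in $U^{(l)}_k$ only forces $\sigma_d(M_k)\|v\|\lesssim\delta_k\sigma_1(M_k)\|a_k\|$, which cannot be excluded without rates.

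There is also a gap in the auxiliary "finite-time version" of Proposition~\ref{p.exentricidad}: Proposition~\ref{p.limmesa} gives $\hm_{\hq}=\lim_n A^n(q_n)_{\ast}m_{q_n}$ for a \emph{fixed} $\hq$ as $n\to\infty$; it does not say that $\big(A^{n_k}(x,t)\big)_{\ast}m_{x,t}$ is close to $\hm_{\hf^{n_k}(\hx,t)}=\delta_{\xi(\hf^{n_k}(\hx,t))}$ when both the base point and the time move together. To get that you would need an additional Egorov/Birkhoff argument (another "good set" $\hM_{c}$, restricting to return times where the martingale convergence is uniform), which you have not supplied. For comparison, the paper's proof avoids all of this: after the reduction of Section~\ref{s.martingale2} the stable holonomies of $\hA$ are trivial, hence the \emph{unstable} holonomies of $\adj$ are trivial, so $\xi^{\ast}$ — being invariant under them — is constant on local stable sets; thus $\eta=(\xi^{\ast})^{\perp}$ depends only on $(x,t)$ and defines, for each $x$, a section in $\hsec$. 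Since $\hm_{\hx,t}=\delta_{\xi(\hx,t)}$ (Theorem~\ref{t.Dirac}), Proposition~\ref{p.medidazero} applied to $\graf\eta_x$ gives directly that the set where $\xi(\hx,t)$ meets $\eta(\hx,t)$ non-trivially has zero $\hmu$-measure. If you want to salvage your route, the estimates must be made quantitative (rates for $\epsilon_k,\delta_k$ against the singular value spread), and the lower bound must be obtained without assuming transversality — at which point the paper's soft argument is both shorter and strictly easier.
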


\begin{proof}
Recall that the stable linear holonomies of $\hA$ are trivial. Thus, the same is true for the unstable linear
holonomies of $\adj$. So, the fact that $\xi^{\ast}$ is invariant under unstable linear holonomies means that
it is constant on local stable sets of $\hf$.
Then the same is true about his orthogonal complement $\eta(\hx,t)=\xi^{\ast}(\hx,t)^{\perp}$,
which means that it only depends on $\eta(\hx,t)=\eta(x,t)$, where $x=P(\hx)$.
Recall that the graph of $\eta(x,\cdot)$ over $K$ has zero $m_{x}$-measure, by Proposition~\ref{p.medidazero}:
\begin{eqnarray*}
m_{x}(\graf\,\eta_x)&=&\int \int \delta_{\xi_{\hx,t}}(\eta(x,t))d\mu^c(t)d\mu^{s}_x(\hx)\\
&=&\mu^{c}\times \mu^{s}\left(\lbrace\hx,t:\xi(\hx,t)\in \eta(x,t)\rbrace\right) = 0
\end{eqnarray*}
for $ \muSigma$-almost every $x\in\Sigma$.
Hence $\hmu\left(\lbrace\hx,t:\xi(\hx,t)\in \eta(x,t)\rbrace\right)=0$, which proves the lemma.
\end{proof}


\section{Proof of Theorem~\ref{teo.B} }\label{s.teoB}

Denote by $\eta\hxt\in \grassd$ the orthogonal complement of $\xi^{\ast}\hxt$ at each $\hxt\in \hM$.
Recall that $\xi^{\ast}$ was defined in Corollary~\ref{c.xiast} and is invariant under $\adj$:
$$
\adj\hxt\xi^{\ast}\hxt=\xi^{\ast}(\hf^{-1}\hxt) \text{ for $\hmu$-almost every $\hxt$.}
$$
Consequently, $\eta$ is invariant under $A$.

According to Lemma~\ref{l.ortogonal}, we have that $\field^{d}=\xi\hxt\oplus \eta\hxt$ at $\hmu$-almost every point.
To prove Theorem~\ref{teo.B} we are going to show that the Lyapunov exponents of $A$ along $\xi$ are strictly greater
than those along $\eta$. For that, let
\begin{equation*}
\xi\hxt=\xi^{1}\hxt\oplus\dots\oplus\xi^{u}\hxt\quand\eta\hxt=\eta^{s}\hxt\oplus\dots\oplus\eta^{1}\hxt
\end{equation*}
be the Oseledets decomposition of $A$ restricted to the two invariant sub-bundles,
where $\xi^{u}$ corresponds to the smallest Lyapunov exponent among $\xi^{i}$ and $\eta^{s}$ the largest among all $\eta^{j}$.

Denote $d_u=\dim \xi^{u}$ and $d_s=\dim\eta^s$, and then let $\lambda_u$ and $\lambda_s$ be the Lyapunov exponents
associated to these two sub-bundles, respectively. Define
\begin{equation*}
\Delta^n\hxt=\dfrac{\det\left(A^n\hxt,\xi^u\hxt\right)^{ \frac{1}{d_u}}}{\det\left(A^n\hxt,W\hxt\right)^{ \frac{1}{d_u+d_s}}},
\end{equation*}
where $W\hxt=\xi^u\hxt\oplus \eta^s\hxt$. By the Oseledets theorem
\begin{equation*}
\lim_{n\to\infty} \frac{1}{n}\log \Delta^n\hxt=\frac{d_s}{d_u+d_s}\left(\lambda_u-\lambda_s\right).
\end{equation*}
The proof of the following proposition is identical to the proof of Proposition 7.3 in \cite{AvV1}:

\begin{proposition}\label{p.infinito}
For every $0<c<1$ there exist a set $\hM_c\subset \hM$ with $\hmu(\hM_c)>c$ such that for $\hmu$-almost every $\hxt\in \hM$
\begin{equation*}
\lim_{n\to\infty} \Delta^n\hxt=\infty
\end{equation*}
restricted to the sub-sequence of values $n$ for which $\hf^n\hxt\in \hM_c$.
\end{proposition}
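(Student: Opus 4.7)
\emph{Plan.}
The proof will follow \cite[Proposition~7.3]{AvV1}. The core idea is that the divergence of the eccentricity $E(l,A^n\hxt)\to\infty$, furnished by the corollary of Proposition~\ref{p.exentricidad} applied to the adjoint cocycle, together with the transversality $\xi\hxt\oplus\eta\hxt=\complex^d$ from Lemma~\ref{l.ortogonal}, forces the geometric mean of the singular values of $A^n\hxt$ along $\xi^u$ to outgrow the one along $\eta^s$.

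First I would apply Lusin's theorem to the measurable sections $\xi:\hM\to\grassl$ and $\eta:\hM\to\grassd$ to extract a compact set $\hM_c\subset\hM$ with $\hmu(\hM_c)>c$ on which both sections are continuous. Since Lemma~\ref{l.ortogonal} gives $\xi\oplus\eta=\complex^d$ almost everywhere, after discarding a null set the transversality angle $\angle(\xi,\eta)$ is bounded below by some $\delta>0$ on $\hM_c$, and similarly for $\angle(\xi^u,\eta^s)$. Setting $D^u_n=\det(A^n\hxt,\xi^u\hxt)$ and $D^s_n=\det(A^n\hxt,\eta^s\hxt)$, the volume formula for the direct sum $W=\xi^u\oplus\eta^s$ reads
\begin{equation*}
\det(A^n\hxt,W\hxt)=D^u_n\,D^s_n\cdot\frac{\sin\angle(\xi^u(\hf^n\hxt),\eta^s(\hf^n\hxt))}{\sin\angle(\xi^u\hxt,\eta^s\hxt)},
\end{equation*}
and a direct algebraic manipulation gives
\begin{equation*}
\Delta^n\hxt=\left[\frac{(D^u_n)^{1/d_u}}{(D^s_n)^{1/d_s}}\right]^{d_s/(d_u+d_s)}\cdot\left[\frac{\sin\angle(\xi^u\hxt,\eta^s\hxt)}{\sin\angle(\xi^u(\hf^n\hxt),\eta^s(\hf^n\hxt))}\right]^{1/(d_u+d_s)}.
\end{equation*}
The second factor is bounded above and below whenever $\hf^n\hxt\in\hM_c$; using also $(D^u_n)^{1/d_u}\geq m(A^n\hxt\mid\xi)$ and $(D^s_n)^{1/d_s}\leq\|A^n\hxt\mid\eta\|$, the problem reduces to showing $m(A^n\hxt\mid\xi)/\|A^n\hxt\mid\eta\|\to\infty$ along the subsequence.

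Next, let $a_1\geq\cdots\geq a_d$ be the singular values of $A^n\hxt$, and let $V_1(n)\subset\complex^d$ and $V_2(n)=V_1(n)^\perp$ be the spans of the top-$l$ and bottom-$(d-l)$ right singular vectors. The corollary of Proposition~\ref{p.exentricidad} gives $E(l,A^n\hxt)=a_l/a_{l+1}\to\infty$ along iterates in $\hM_c$, together with $V_1(n)\to\xi^{\ast}\hxt$; hence $V_2(n)\to(\xi^{\ast}\hxt)^\perp=\eta\hxt$. For any unit $v\in\xi\hxt$ the angle $\angle(v,V_2(n))$ is bounded below by $\delta/2$ for large $n$, so the SVD identity $\|A^n\hxt\,v\|^2=\sum_i a_i^2|\langle v,v_i\rangle|^2\geq a_l^2\|P_{V_1(n)}v\|^2$ yields $m(A^n\hxt\mid\xi)\geq c\,a_l$ for some constant $c>0$. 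A dual argument, using $V_2(n)\to\eta\hxt$ in the domain together with the $A$-invariance $A^n\eta\hxt=\eta(\hf^n\hxt)$ to position the image of $\eta\hxt$ near the bottom left singular subspace in the codomain, produces the companion bound $\|A^n\hxt\mid\eta\|\leq C\,a_{l+1}$. Combining, $m(A^n\hxt\mid\xi)/\|A^n\hxt\mid\eta\|\geq (c/C)\,E(l,A^n\hxt)\to\infty$, completing the proof.

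The main technical obstacle will be the upper bound $\|A^n\hxt\mid\eta\|\leq C\,a_{l+1}$. The naive SVD estimate only gives $\|A^n\hxt\mid\eta\|^2\leq a_1^2\sin^2\angle(\eta\hxt,V_2(n))+a_{l+1}^2$, and the deviation $\sin\angle(\eta\hxt,V_2(n))$ must decay quickly enough that $a_1\sin\angle(\eta\hxt,V_2(n))$ does not swamp $a_{l+1}$. Here the continuity of $\eta$ on $\hM_c$, the $A$-invariance of $\eta$, and the forward version of Proposition~\ref{p.exentricidad} (which, by compactness of $\hM_c$, places the bottom left singular subspace uniformly close to $\eta(\hf^n\hxt)$) will combine to supply the required quantitative control.
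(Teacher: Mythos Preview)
Your proposal is essentially correct and matches what the paper intends: the paper itself gives no proof and simply states that the argument is identical to \cite[Proposition~7.3]{AvV1}, which is precisely the route you take. Your reduction to controlling $m(A^n\mid\xi)/\|A^n\mid\eta\|$ via the singular value decomposition, the use of Lusin to get uniform transversality on $\hM_c$, and the identification of the adjoint corollary as the source of $V_1(n)\to\xi^\ast\hxt$ are all the right ingredients.

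One comment on the technical obstacle you flag. The bound $\|A^n\hxt\mid\eta\|\leq C\,a_{l+1}$ does require more than the pointwise convergence statements in Proposition~\ref{p.exentricidad} and its corollary: you need uniformity over points of $\hM_c$. This uniformity is available, but it comes from the \emph{proof} of Proposition~\ref{p.exentricidad} rather than its statement. Namely, the set $\hM_c$ is constructed so that the family $\mathcal{N}=\{m_{(x,t)}:(x,t)\in M_c\}$ is weak$^\ast$-compact and uniformly non-atomic on hyperplanes; Proposition~\ref{p.eccentricity} then delivers its conclusions uniformly over sequences drawn from $\mathcal{N}$, and a standard sub-subsequence argument upgrades the pointwise convergence to uniform closeness of $U_1(n)$ to $\xi(\hf^n\hxt)$ whenever both $\hxt$ and $\hf^n\hxt$ lie in $\hM_c$. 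With that in hand, the invariance $A^n\eta\hxt=\eta(\hf^n\hxt)$ together with the uniform angle bound $\angle(\xi,\eta)\geq\delta$ on $\hM_c$ forces $A^n\eta\hxt$ uniformly close to $U_2(n)$, which is exactly the quantitative control you need. You might make this appeal to uniformity via compactness of $\mathcal{N}$ explicit in the final write-up.
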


So now fix some $0<c<1$ and $\hM_c$ given by Proposition~\ref{p.infinito}.
Let $g:\hM_c\to\hM_c$ be the first return map:
\begin{equation*}
g\left(\hx,t\right)=\hf^{r(\hx,t)}\left(\hx,t\right).
\end{equation*}
Then we can define the induced cocycle $G:\hM_c\times \field^{d} \to \hM_c\times \field^{d}$
\begin{equation*}
G\left(\left(\hx,t\right),v\right)=\left(g\left(\hx,t\right),D(\hx,t)v\right),
\end{equation*}
where $D(\hx,t)=\hA^{r\left(\hx,t\right)}\left(\hx,t\right)$. It is well known (see~\cite[Proposition~4.18]{LLE})
that the Lyapunov exponents of $G$ with respect to $\frac{1}{\hmu(\hM_c)}\hmu$ are the products
of the exponents of $\hF_{A}$ by the average return time $1/\hmu(\hM_c)$.
Thus, to show that $\lambda_u>\lambda_s$ it suffices to prove the corresponding fact for $G$.

Define
\begin{equation*}
\tilde{\Delta}^{k}(\hxt)=
\dfrac{\det\left(D^k\hxt,\xi^u\hxt\right)^{ \frac{1}{d_u}}}{\det\left(D^k\hxt,W\hxt\right)^{ \frac{1}{d_u+d_s}}}.
\end{equation*}
Then $\tilde{\Delta}^{k}\hxt$ is a sub-sequence of $\Delta^n\hxt$ such that $\hf^n\hxt\in \hM_c$.
So, using Proposition~\ref{p.infinito} we conclude that
\begin{equation*}
\lim_{n\to\infty} \sum_{j=0}^{k-1}\log\tilde{\Delta}\left(g^{j}\hxt\right)=\lim_{n\to\infty} \log\tilde{\Delta}^k\left(p\right)=\infty
\end{equation*}
for $\hmu$-almost every $\hxt\in \hM_c$.

We need the following classical fact (see \cite[Corollary~6.10]{Kr85}):
\begin{lemma}
Let $T:X\to X$ be a measurable transformation preserving a probability measure $\nu$ in $X$,
and $\varphi:X\to \real$ be a $\nu$ integrable function such that
$\lim_{n\to\infty}\sum_{j=0}^{n-1}\left(\varphi \circ T^{j}\right)=+\infty$ at $\nu$ almost every point.
Then $\int \varphi d\nu >0$.
\end{lemma}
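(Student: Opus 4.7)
The plan is to reduce to the ergodic case via the ergodic decomposition and then split according to the sign of $\int\varphi\,d\nu$. Writing $\nu=\int\nu_y\,d\nu(y)$ as an integral of $T$-ergodic probabilities, one has $\int\varphi\,d\nu=\int\bigl(\int\varphi\,d\nu_y\bigr)\,d\nu(y)$. Moreover, since $\sum_{j=0}^{n-1}\varphi\circ T^j\to+\infty$ holds on some $\nu$-full measure set, it also holds on $\nu_y$-full measure sets for $\nu$-almost every $y$. Thus it suffices to establish the statement when $\nu$ is ergodic and then integrate.

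Assume then that $\nu$ is ergodic and set $c=\int\varphi\,d\nu$. If $c<0$, Birkhoff's ergodic theorem yields $\frac{1}{n}\sum_{j=0}^{n-1}\varphi\circ T^j\to c<0$ at $\nu$-almost every point, which forces $\sum_{j=0}^{n-1}\varphi\circ T^j\to-\infty$ and directly contradicts the hypothesis. Hence $c\ge 0$, and the only case left to exclude is $c=0$.

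The critical case $c=0$ is the main obstacle, and it is handled by invoking Atkinson's recurrence theorem (G.~Atkinson, \emph{Recurrence of co-cycles and random walks}, J. London Math. Soc. \textbf{13} (1976), 486--488): for an ergodic probability-preserving system and $\varphi\in L^1(\nu)$ with $\int\varphi\,d\nu=0$, one has
\[
\liminf_{n\to\infty}\Bigl|\sum_{j=0}^{n-1}\varphi\circ T^j(x)\Bigr|=0
\]
at $\nu$-almost every $x$. This is flatly incompatible with $\sum_{j=0}^{n-1}\varphi\circ T^j\to+\infty$, so the case $c=0$ is ruled out and we conclude $\int\varphi\,d\nu>0$.

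The only nontrivial ingredient in this scheme is Atkinson's theorem, which is equivalent to the conservativity of the skew-product $(x,t)\mapsto(Tx,t+\varphi(x))$ on $X\times\real$; the reductions through ergodic decomposition and Birkhoff are routine bookkeeping. Should the reader prefer a self-contained argument, Atkinson's statement can be derived by a Kac-lemma style computation: if the skew-product were dissipative, then an integrable cocycle with zero mean would escape to infinity on a set of positive measure in both $\pm$ directions, contradicting $\int\varphi\,d\nu=0$ via Birkhoff applied to the wandering decomposition.
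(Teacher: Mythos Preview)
Your argument is correct. The paper does not actually prove this lemma: it is stated there as a ``well-known fact'' and immediately applied, so there is no proof in the paper to compare against. Your route---ergodic decomposition, then Birkhoff to rule out $\int\varphi\,d\nu<0$, then Atkinson's recurrence theorem to rule out $\int\varphi\,d\nu=0$---is the standard way to justify this fact, and all the steps are sound. The only remark is that your final paragraph sketching a self-contained proof of Atkinson's theorem is too vague to stand on its own, but since you already cite Atkinson explicitly this does not affect the validity of the argument.
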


Applying the lemma to $T=g$ and $\varphi=\log \tilde{\Delta}$ we find that
\begin{equation*}
\lim_{k\to\infty}\frac{1}{k}\log \tilde{\Delta}^k\hxt=\lim_{k\to\infty}\frac{1}{k}\sum_{j=0}^{k-1}\log \tilde{\Delta}\left(g^j\hxt\right)=\int \log \tilde{\Delta} \frac{d\hmu } {\hmu(\hM_c)}>0
\end{equation*}
at $\hmu$-almost every point. On the other hand the relation between Lyapunov exponents gives that
\begin{equation*}
\lim_{k\to\infty}\frac{1}{k}\log \tilde{\Delta}^k\hxt=\frac{d_s}{d_u+d_s}\left(\lambda_u-\lambda_s\right)\frac{1}{\hmu(\hM_c)}.
\end{equation*}
this means that $\lambda_u>\lambda_fs$, so there is a gap between the first $l$ Lyapunov exponents and the remaining $d-l$ ones.
Since this applies for every $1\leq l \leq d$, we conclude that the Lyapunov spectrum is simple.

What is left to prove is that a simple cocycle is also a continuity point for the Lyapunov exponents.
\begin{proposition}
If $A$ is simple, then, for every $1\leq i\leq d$, the functions $\lambda_i:H^{\alpha}(\hM)\to \real$ are continuous in $A$.
\end{proposition}
\begin{proof}
Take $\Phi_k:\hM \times \projective \field^d \to \real$
$$
\Phi_k(\hx,v)=\frac{\log\norm{\hA_k(\hx)v}}{\norm{v}},
$$
then for every $k\in\natural$, there exists an $F_{\hA_k}$-invariant $u$-state $\hm^u_k$ such that $\lambda_1(\hA_k)=\int\Phi_k d\hm^u_k$.
Passing to a subsequence if necessary we can suppose that $\hm^u_k$ converges in the week$^*$ topology to some $F_{\hA}$-invariant $u$-state $\hm^u_A$.
 By Theorem~\ref{t.Dirac}
$\hm^u_{\hA}=\int \delta_{E^1_{\hx}}d\hmu(\hx)$, this implies that $$
\lambda_1(\hA_k)\to \int \Phi(\hx,v)d\hm^u_{\hA}=\lambda_1(\hA).
$$

Now, using the same argument for every $i$-dimensional Grassmannian we get that $\lambda_1+\cdots+\lambda_i$ is also continuous, concluding the proof.
\end{proof}

The simplicity plus the continuity implies that there exists a neighbourhood of $A$ with simple Lyapunov spectrum. This completes the proof of Theorem~\ref{teo.B}.



\appendix

\section{Continuous maps are dense in $L^1(M,N)$ }\label{a.denseL1}

Let $M$ be a normal topological space and $N$ be a geodesically convex separable metric space (Section~\ref{s.L1continuity}).
Denote by $\cF$ the set of measurable maps $f:M\to N$. Given any regular $\sigma$-finite Borel measure $\mu$ on $M$,
fix any point $\0\in N$ and define
$
\L1=\lbrace f\in \cF: \int\dist_N\big(f(x),\0\big) \, d\mu(x)<\infty\rbrace.
$
When $\mu$ is a finite measure, the choice of $\0\in N$ is irrelevant: different choices yield the
same space $\L1$.

The function $\dist_{\L1}:\L1\times \L1 \to \real$ defined by
\begin{equation*}
\dist_{\L1}(f,g)=\int d_N \big(f(x),g(x)\big) d\mu(x)
\end{equation*}
is a distance in $\L1$. The special case $N=\real$ of the next proposition is well known,
but here we need the following more general statement:

\begin{proposition}\label{p.denseL1}
The subset of continuous maps $f:M\to N$ is dense in the space $\L1$.
\end{proposition}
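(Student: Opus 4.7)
The plan is to mimic the classical argument for $N=\real$ by passing through ``simple'' maps, but to exploit the geodesic convexity of $N$ in order to patch finitely many values together continuously, since $N$ is not a linear space and one cannot form convex combinations of the $v_i$ directly.

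First I would reduce to the case of finite $\mu$ by a routine exhaustion: write $M=\bigcup_n M_n$ with $\mu(M_n)<\infty$, and use the integrability of $x\mapsto\dist_N(f(x),\0)$ to choose $n$ large enough that the $L^1$-mass carried outside $M_n$ is $<\epsilon$. Next I would approximate any $f\in\L1$ in the $L^1$-distance by a simple map $s=\sum_{i=1}^k v_i\chi_{A_i}$, where the $A_i$ are disjoint measurable subsets of $M_n$ and $v_i\in N$: truncate where $\dist_N(f,\0)>R$ (which has arbitrarily small $L^1$-contribution for $R$ large), and cover the (essentially separable) image of the truncation by finitely many balls of radius $\epsilon$, pulling them back to partition the bulk of $M_n$. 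This reduces the proposition to showing that each such simple $s$ is approximable in $L^1$ by a continuous map.

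For this core step, outer regularity of $\mu$ produces pairwise disjoint closed sets $F_i\subset A_i$ with $\mu(A_i\setminus F_i)$ as small as desired. Normality of $M$, applied inductively to the finitely many disjoint closed $F_i$, yields pairwise disjoint open sets $U_i\supset F_i$, and Urysohn's lemma supplies continuous cut-offs $\phi_i:M\to[0,1]$ with $\phi_i\equiv 1$ on $F_i$ and $\phi_i\equiv 0$ on $M\setminus U_i$. Fixing a reference point $w\in N$, geodesic convexity gives paths $\gamma_i:[0,1]\to N$ with $\gamma_i(0)=w$, $\gamma_i(1)=v_i$, and $\dist_N(\gamma_i(s),\gamma_i(t))\le\tau\dist_N(w,v_i)$. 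Define
\begin{equation*}
g(x)=\gamma_i(\phi_i(x))\text{ if }x\in U_i,\qquad g(x)=w\text{ if }x\notin\textstyle\bigcup_i U_i.
\end{equation*}
Since $\overline{U_i}\cap U_j=\emptyset$ for $i\ne j$ (pairwise disjoint open sets) and $\phi_i\equiv 0$ on the boundary $\partial U_i\subset M\setminus U_i$, the two prescriptions agree on $\partial U_i$ and hence $g$ is continuous on $M$. Moreover $g=v_i=s$ on each $F_i$, while on $M\setminus\bigcup_i F_i$ one has $\dist_N(g(x),s(x))\le(\tau+1)\max_i\dist_N(w,v_i)$; since this latter set has measure less than $\sum_i\mu(A_i\setminus F_i)$, one controls $d_{\L1}(g,s)$ below any prescribed threshold.

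The main obstacle is precisely this continuity of $g$ across the patches: one cannot use convex combinations $\sum\phi_i v_i$ because $N$ is not a linear space, and one cannot ``blend'' pairwise between the $v_i$ and hope for continuity at triple junctions. The device of routing every layer back to a common base point $w$ along a geodesic reparametrized by the Urysohn function $\phi_i$ circumvents this, since on the complement of $\bigcup_i U_i$ all layers agree on the value $w$. A secondary subtlety is that measurable maps into a general metric space need not have essentially separable image; the reduction to simple functions therefore tacitly requires a separability hypothesis on $f(M)$, which is innocuous in the paper's applications where $N$ is a compact space of probability measures.
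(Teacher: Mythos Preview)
Your approach is essentially identical to the paper's: approximate by simple maps, then approximate each simple map by a continuous one using Urysohn cut-offs composed with geodesic paths back to a common base point (the paper uses $\0$ where you use $w$). Two small slips to fix in the write-up: you mean \emph{inner} regularity (closed sets inside $A_i$), not outer; and the claim $\overline{U_i}\cap U_j=\emptyset$ does not follow just from disjointness of the $U_i$, though it can be arranged by an extra application of normality --- the paper is equally casual on this continuity point, and your separability remark is a legitimate caveat that the paper also tacitly assumes.
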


We call $s:M\to N$ a \emph{simple map} if there exist points $v_1, \dots, v_k \in N$ pairwise
disjoint measurable sets $A_1, \dots, A_k \subset M$ with finite $\mu$-measure such that
\begin{equation*}
s(x)=\left\{\begin{array}{ll}
v_i & \text{if } x\in A_i \\
\0 & \text{if } x \notin \cup_{i=1}^k A_i \\
\end{array}\right.
\end{equation*}
Proposition~\ref{p.denseL1} is an immediate consequence of Lemmas~\ref{l.simples}
and~\ref{l.simple.cont} below.

\begin{lemma}\label{l.simples}
The set $\cS$ of simple functions is dense in $\L1$.
\end{lemma}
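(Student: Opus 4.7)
The plan is to prove density by two successive reductions. Fix $f \in \L1$ and $\epsilon > 0$; I will exhibit a simple map $s$ with $\dist_{\L1}(s, f) < \epsilon$.

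\emph{Reduction to bounded, finite-measure support.} Since $\mu$ is $\sigma$-finite, write $M = \bigcup_{n \geq 1} M_n$ as an increasing union of measurable sets with $\mu(M_n) < \infty$, and let
\begin{equation*}
A_n = M_n \cap \{x \in M : \dist_N(f(x), \0) \leq n\}.
\end{equation*}
Define $f_n \in \cF$ by $f_n(x) = f(x)$ on $A_n$ and $f_n(x) = \0$ on $M \setminus A_n$. Then the pointwise distance $\dist_N(f_n(x), f(x)) = \dist_N(f(x), \0)\, \chi_{M \setminus A_n}(x)$ vanishes as $n \to \infty$ and is dominated by the $L^1$ function $x \mapsto \dist_N(f(x), \0)$. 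Dominated convergence gives $\dist_{\L1}(f_n, f) \to 0$, so it suffices to approximate each $f_n$. From now on I assume $f$ is supported on a single set $A$ with $\mu(A) < \infty$ and $R := \sup_{x \in A} \dist_N(f(x), \0) < \infty$.

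\emph{Approximation by a simple map.} The essential image $f(A)$ is separable (in the applications of this lemma in the paper, $N$ itself is a separable metric space, so this is automatic). Fix a dense sequence $(v_i)_{i \geq 1}$ in $f(A)$, let $\eta > 0$ be a parameter to be chosen, and set
\begin{equation*}
B_i = f^{-1}\bigl(\{y \in N : \dist_N(y, v_i) < \eta\}\bigr) \cap A,
\qquad
\tilde A_i = B_i \setminus \bigcup_{j < i} B_j .
\end{equation*}
The sets $\tilde A_i$ are pairwise disjoint, measurable, of finite measure, and $\bigsqcup_{i \geq 1} \tilde A_i = A$. Choose $k \geq 1$ such that $\mu\bigl(A \setminus \bigcup_{i \leq k} \tilde A_i\bigr) < \eta$, and define
\begin{equation*}
s(x) = \begin{cases} v_i & \text{if } x \in \tilde A_i \text{ with } 1 \leq i \leq k,\\ \0 & \text{otherwise.}\end{cases}
\end{equation*}
Then $s$ is a simple map in the sense of the paper. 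Splitting the integral defining $\dist_{\L1}(s, f)$ across the three regions $\bigcup_{i \leq k} \tilde A_i$, $A \setminus \bigcup_{i \leq k} \tilde A_i$, and $M \setminus A$, the integrand is bounded by $\eta$, $R$, and $0$ respectively, yielding
\begin{equation*}
\dist_{\L1}(s, f) \leq \eta \, \mu(A) + R \, \eta,
\end{equation*}
which is at most $\epsilon$ once $\eta$ is small enough.

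The only genuinely subtle point is ensuring separability of the image $f(A)$, which the hypotheses as stated do not guarantee abstractly; the rest is routine $\sigma$-finite bookkeeping and dominated convergence. For the target spaces $N$ encountered in this paper (spaces of probability measures on a compact Grassmannian, or the Grassmannian itself), $N$ is compact metric and hence separable, so the argument applies directly.
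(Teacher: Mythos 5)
Your argument is correct and follows essentially the same route as the paper's proof: restrict to a finite-measure set, partition it by pulling back small balls centered at a countable dense set, keep finitely many pieces of the partition, and let the simple map take the corresponding centers as values. The only differences are bookkeeping — you truncate so that $\dist_N(f,\0)$ is uniformly bounded by $R$ and take your dense sequence inside $f(A)$, whereas the paper controls the tail integral $\int_{M\setminus K_0}\dist_N(f,\0)\,d\mu$ directly and uses a countable dense subset of $N$ itself, so the separability issue you flag is implicitly assumed in the paper's proof as well.
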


\begin{proof}
Consider any $f \in \L1$. Given $\epsilon>0$, fix a set $K_0\subset M$ with finite $\mu$-measure
and such that
$$
\int_{M\setminus K_0} \dist_N(f(x),\0) \, d\mu(x) \le \frac{\epsilon}{4}.
$$
Let $\lbrace v_1,\dots,v_i,\dots \rbrace$ be a countable dense subset of $N$. The family
$$
\big\{B\big(v_i,\frac{\epsilon}{\mu(K_0)}\big): i \in \natural\}
$$
covers $N$ and, consequently,
$$
B_i=B\big(v_i,\frac{\epsilon}{2\mu(K_0)}\big) \setminus
\bigcup_{j<i} B\big(v_i,\frac{\epsilon}{2\mu(K_0)}\big),
\quad i\in \natural
$$
is a partition of $N$. Then $A_i=K_0\cap f^{-1}(B_i)$, $i \in \natural$ is a partition of $K_0$
into measurable sets. Fix $k\in\natural$ large enough that
$$
\int_{K_0 \setminus \bigcup_{i=1}^k A_i} \dist_N(f(x),\0) \, d\mu(x) \le \frac{\epsilon}{4}.
$$
Now define $s:M\to N$ by
\begin{equation*}
s(x)=\left\{\begin{array}{ll}
         v_i & \text{if } x\in A_i \text{ for } i = 1, \dots, k\\
         \0 &\text{if } x\notin \cup_{i=1}^k A_i.\\
      \end{array}\right.
\end{equation*}
Then
$$
\int_{M\setminus \cup_{i=1}^k A_i} \dist_{N}(f(x),s(x)) \, d\mu (x)
= \int_{M\setminus \cup_{i=1}^k A_i} \dist_{N}(f(x),\0) \, d\mu (x)
\le \frac{\epsilon}{2}
$$
and
$$
\int_{\cup_{i=1}^k A_i} \dist_{N}(f(x),s(x)) \, d\mu (x)
\le \mu\big(\cup_{i=1}^k A_i)\frac{\epsilon}{\mu(K_0)}
\le \frac{\epsilon}{2}.
$$
Thus $\dist_{\L1}(f,s) < \epsilon$, which proves the lemma.
\end{proof}

\begin{lemma}\label{l.simple.cont}
For every $s\in \cS$ and $\epsilon>0$ there exists a continuous map $f:M\to N$
such that  $\dist_{\L1}(f,s) \le \epsilon$.
\end{lemma}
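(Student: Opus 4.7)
The plan is to mimic the classical $L^1$-density argument, with the geodesic convexity of $N$ replacing the vector-space structure. Write $s=v_i$ on pairwise disjoint measurable sets $A_i$ of finite $\mu$-measure ($i=1,\dots,k$) and $s=\0$ elsewhere, set $C=\max_i\dist_N(\0,v_i)$, and fix $\delta>0$ small (to be determined in terms of $\epsilon$, $k$, $C$ and the constant $\tau$ from Definition~\ref{d.geodesicallyconvex}).

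First I would use inner regularity of $\mu$ to pick closed sets $K_i\subset A_i$ with $\mu(A_i\setminus K_i)<\delta$, and then, combining the normality of $M$ (finitely many pairwise disjoint closed sets can always be separated by pairwise disjoint open sets) with outer regularity of $\mu$, produce pairwise disjoint open sets $U_i\supset K_i$ satisfying $\mu(U_i\setminus K_i)<\delta$. Urysohn's lemma yields continuous $\phi_i\colon M\to[0,1]$ equal to $1$ on $K_i$ and to $0$ off $U_i$. For each $i$, fix a path $\lambda_i\colon[0,1]\to N$ from $\0$ to $v_i$ as in Definition~\ref{d.geodesicallyconvex}, and define
\[
f(x)=\begin{cases}\lambda_i(\phi_i(x))&\text{if }x\in U_i,\\ \0&\text{if }x\notin\bigcup_i U_i.\end{cases}
\]

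The map $f$ is well-defined by disjointness of the $U_i$. Continuity on each open set $U_i$ is clear since $f|_{U_i}=\lambda_i\circ\phi_i$. At a point $x\notin\bigcup_i U_i$ continuity follows from the fact that $\phi_i(x)=0$ and $\lambda_i(0)=\0$ for every $i$: given any open $V\ni\0$ in $N$, the finite intersection $\bigcap_i(\lambda_i\circ\phi_i)^{-1}(V)$ is an open neighborhood of $x$ whose image under $f$ lies in $V$. For the $L^1$-estimate, $f$ and $s$ agree on each $K_i$ (both equal $v_i$) and on the common complement $M\setminus\bigl(\bigcup_i U_i\cup\bigcup_i A_i\bigr)$ (both equal $\0$); hence the disagreement set is contained in $\bigcup_i(U_i\setminus K_i)\cup\bigcup_i(A_i\setminus K_i)$, of total measure at most $2k\delta$. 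The geodesic property of $\lambda_i$ bounds $\dist_N(f(x),\0)\le\tau C$ on $\bigcup_iU_i$, so $\dist_N(f,s)\le(1+\tau)C$ on the disagreement set, yielding $\dist_{\L1}(f,s)\le 2k(1+\tau)C\delta<\epsilon$ provided $\delta$ was chosen small enough.

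The only mildly delicate step is the simultaneous choice of the open sets $U_i$ to be pairwise disjoint and to have small symmetric difference with the $K_i$, which is where normality and outer regularity have to be combined; the use of geodesic convexity, essential because $N$ carries no linear structure, is confined to the line defining $f$.
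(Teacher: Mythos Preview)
Your proof is correct and follows essentially the same route as the paper: inner regularity to approximate the $A_i$ by closed (the paper uses compact) sets $K_i$, normality plus outer regularity to separate them by small open $U_i$, Urysohn functions $\phi_i$, and geodesic paths $\lambda_i$ to interpolate between $\0$ and $v_i$. Your continuity argument at boundary points is actually more explicit than the paper's, and your final estimate $2k(1+\tau)C\delta$ matches the paper's bookkeeping up to harmless constants.
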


\begin{proof}
Let $A_i$ and $v_i$, $i=1, \dots, k$ be as in the definition of the simple map $s$ and
$\tau\ge 1$ be as in \eqref{eq.geodesicallyconvex}. Denote
$
L = \max\{d(v_i,\0): i=1, \dots, k\}.
$
For each $i=1, \dots, k$, consider a compact set $K_i \subset A_i$ such that
$\mu(A_i\setminus K_i)< \epsilon/(4k\tau L)$.  Since the $K_i$ are pairwise disjoint, and $M$
is assumed to be normal, there exist pairwise disjoint open sets $B_i \supset K_i$,
$i=1, \dots, k$ with $\mu(B_i\setminus K_i)<\epsilon/(4k\tau L)$.
In particular, we also have $\mu(A_i \setminus B_i)<\epsilon/(4k\tau L)$.

By the Urysohn lemma, there are continuous functions $\psi_i:M\to \real$, $i=1, \dots, k$
such that
\begin{equation}
\psi_i(x)=\left\{\begin{array}{ll}
       1 & \text{if } x\in K_i\\
       0 &\text{if } x\notin B_i.\\
      \end{array} \right.
\end{equation}
Now we use the assumption that $N$ is geodesically convex.
For each $i=1, \dots, k$, fix $\lambda_i:\left[ 0,1 \right] \to N$ with $\lambda_i(1)=v_i$ and
$\lambda_i(0)=\0$. Then define $f:M\to N$ by
 \begin{equation*}
 f(x)=\left\{\begin{array}{ll}
       \lambda_i(\psi_i(x)) & \text{if } x\in B_i \text{ with } i=1, \dots, k\\
       \0                             & \text{if } x\notin \bigcup_{i=1}^k B_i.\\
      \end{array}\right.
\end{equation*}
It is clear that $f$ is continuous, because the $B_i$ are open and pairwise disjoint.
Moreover, $f(x)=s(x)$ if
$$
\text{either}\quad
x \in \bigcup_{i=1}^k K_i
\quad\text{or}\quad
x \in M\setminus\big(\bigcup_{i=1}^{k} A_i\cup \bigcup_{i=1}^k B_i \big).
$$
All the other values of $x$ fall into some of the following cases:
\begin{enumerate}
\item $x\in A_i \cap \big(B_j \setminus K_j\big)$ for some $i$ and $j$: then
$$
\begin{aligned}
d_N(f(x),s(x))
& \le d_N(\lambda_j(\psi_j(x)),\0) + d_N(\0,v_i) \\
& \le \tau d_N(v_j,\0) + d_N(\0,v_i)
\le 2 \tau L.
\end{aligned}
$$
\item $x\in A_i \setminus \cup_{j=1}^k B_j$ for some $i$: then
$
d_N(f(x),s(x)) = d(\0,v_i) \le L.
$
\item $x \in B_j \setminus \cup_{i=1}^k A_i$ for some $j$: then
$$
d_N(f(x),s(x))
= d_N(\lambda_j(\psi_j(x)),\0) \le \tau d_N(v_j,\0) \le \tau L.
$$
\end{enumerate}
In either case, $x$ belongs to the set
$$
\bigcup_{i=1}^k (A_i \setminus B_i) \cup \bigcup_{j=1}^k (B_j \setminus K_j)
$$
which, by construction, has $\mu$-measure bounded by $\epsilon/(2\tau L)$.
So,
$$
\dist_{\L1}(f,s)
\le \frac{\epsilon}{2\tau L}\max\big\{d_N(f(x),s(x)): x \in M\big\}  \le \epsilon,
$$
as claimed.
\end{proof}

\end{document}